\theoremstyle{plain}
\newtheorem{lem}{Lemma}[section]
\newtheorem{thm}[lem]{Theorem}
\theoremstyle{definition}
\newtheorem{defn}{Definition}[section]
\theoremstyle{remark}
\newtheorem{rem}{Remark}[section]
\newcommand{\p}{\partial}
\newcommand{\ds}{\displaystyle}
\newcommand{\ms}{\medskip}
\newcommand{\R}{ \mathbb{R}}
\def \e{\ensuremath{\mathrm{e}}}
\def \i{\ensuremath{\mathrm{i}}}
\def \d{\ensuremath{\mathrm{d}}}
\begin{document}

\title{ \large\bf  Enhanced Microscale Hydrodynamic Near-cloaking using Electro-osmosis
}
\author{
Hongyu Liu\thanks{Department of Mathematics, City University of Hong Kong, Kowloon, Hong Kong, China. Email: hongyu.liuip@gmail.com; hongyliu@cityu.edu.hk}
\and
Zhi-Qiang Miao\thanks{School of Mathematics, Hunan University, Changsha 410082, Hunan Province, China. Email: zhiqiang\_miao@hnu.edu.cn}
\and
Guang-Hui Zheng\thanks{School of Mathematics, Hunan University, Changsha 410082, Hunan Province, China. Email: zhenggh2012@hnu.edu.cn}
}
\date{}%
\maketitle
\begin{abstract}
In this paper, we develop a general mathematical framework for enhanced hydrodynamic near-cloaking of electro-osmotic flow for more complex shapes, which is  obtained by simultaneously perturbing the inner and outer boundaries of the perfect cloaking structure.
We first derive the asymptotic expansions of perturbed fields and obtain a first-order coupled system. We then establish the representation
formula of the solution to the first-order coupled system using the layer potential techniques. Based on the asymptotic analysis, the enhanced
hydrodynamic near-cloaking conditions are derived for the control region with general cross-sectional shape. The conditions reveal the inner relationship between the shapes of the object and the control region. Especially, for the shape of  a deformed annulus or confocal ellipses cylinder, the cloaking conditions and relationship of shapes are quantified more accurately. Our
theoretical findings are validated and supplemented by a variety of numerical results. The results in this paper also provide a mathematical foundation for more complex
hydrodynamic cloaking.
\end{abstract}
\noindent {\footnotesize {\bf AMS subject classifications 2000.} 31B10; 35J05; 35Q35; 76D27; 76D55.}

\noindent {\footnotesize {\bf Key words.} Near-cloaking; Boundary perturbation;  Microscale hydrodynamic; Electro-osmosis; Confocal ellipses.}
\section{Introduction}
Over the years, near-cloaking has been developed all the time along with perfect cloaking, although the latter is what people want most.
Many studies about near-cloaking have focused on regularized versions of a singular change-of-variables approach (transformation optics) in the literature \cite{Greenleaf2003, Pendry2006}. This singular transformation effectively blows up a point to a region in space that needs to be cloaked, which yields perfect cloaking; that is, the target region is rendered completely invisible to boundary measurements.  Later, in \cite{Kohn2008} Kohn et al. presented a regularized approximation by blowing up a tiny ball to a hidden region and studied the asymptotic behavior when the radius of the small ball tends to zero, therefore recovering the singular transform of \cite{Greenleaf2003, Pendry2006}. The proposed near-cloaking for the steady conduction problem is estimated to be $\epsilon^2$-close to the perfect cloaking in two-dimensional space.
The method is also extended to the Helmholtz equation \cite{Liu2009, Kohn2010}.
For the purpose of providing an approximation scheme for the singular transform in \cite{Greenleaf2003, Pendry2006}, Greenleaf et al. \cite{Greenleaf2008} used an alternative strategy that involved truncating singularities. It is worth noting that the structures in \cite{Kohn2008, Greenleaf2008} are proved to be  equivalent in  \cite{Kocyigit2013}. We direct the interested reader to the review papers \cite{Greenleaf2009-1, Greenleaf2009-2} for more information on cloaking via a change-of-variables method with a focus on the previously presented singular transform and briefly address some related studies on near-cloaking in acoustics and electromagnetic \cite{Li2013, Li2015, Liu2009, Liu2013-1, Liu2011, Deng2017-1, Deng2017-2}.

The enhancement of near-cloaking is another topic that has been addressed in the literature by the applied mathematics community working on metamaterials. In 2013, Ammari et al. proposed an enhancement technique that involves covering a small ball of radius $\epsilon$ with multiple coatings and then applying the push-forward maps defined in \cite{Kohn2008}. These multiple coatings which result in the vanishing of certain polarization tensors allow us to improve the $\epsilon^2$-closeness of \cite{Kohn2008} to $\epsilon^{2N}$-closeness in two-dimensional space, where $N$ denotes the number of coatings in the aforementioned structure. For further details, we refer the reader to \cite{Ammari2012-1, Ammari2012-2} in the mathematics literature. The numerical experiments also confirmed their results \cite{Ammari2012-3}. These enhancement techniques are a combination of scattering-cancellation technology and regularized change-of-variables approach.
Here we would like to briefly introduce scattering-cancellation technology. Scattering-cancellation technology has been created and successfully applied in the physics literature, for electromagnetism \cite{Alu2005, Alu2007} and other fields \cite{Chen2012}. This method can realize a similar function to transformation optics, while it only needs bilayer or monolayer structures and homogeneous isotropic bulk materials.
Furthermore, the enhancement method can also be extended to electromagnetism wave \cite{Ammari2013} which is akin to the Maxwell equation, and to Elastic wave \cite{Abbas2017, Liu2021} which is linked to the Lam\'{e} system. An alternate approach involving covering a small ball with a lossy layer with well-chosen parameters was employed by Liu et al. to enhance the near-cloaking in acoustics \cite{Li2012, Liu2013-2}. The lossy-layer cloaking scheme can help us improve the $|\ln\epsilon|^{-1}$ closeness of \cite{Kohn2010} to $\epsilon$-closeness in two-dimensional space.

Recently, there has been rapid progress in microscale hydrodynamic cloaking. The hydrodynamic model has been used to control fluid motion,
i.e., the creeping flow or Stokes flow inside two parallel plates, and a series of experimental works have been reported \cite{Park2019a, Park2019b, Park2021, Boyko2021}. The gap between the two plates is much smaller than the characteristic length of the other two spatial dimensions, so the model is also called the Hele-Shaw flow or Hele-Shaw cell \cite{Hele1898}. By using these microfluidic structures, Park et al. \cite{Park2019a, Park2019b} have demonstrated by simulation that such anisotropic fluid media can be mimicked within the cloak, thereby producing the desired hydrodynamic cloaking effect.
As we know, the cloaking devices designed by transformation optics are difficult to fabricate, which limits their application. Hence, there has been a growing interest in realizing metamaterial-less hydrodynamic cloaks. In particular, in \cite{Boyko2021} Boyko et al. present a new theoretical approach and an experimental demonstration of hydrodynamic cloaking and shielding in a Hele-Shaw cell that does not rely on metamaterials. The method has attracted our attention. We then develop a general mathematical framework \cite{Liu2023} for perfect and approximate hydrodynamic cloaking and shielding of electro-osmotic flow
in the spirit of Boyko's work.

This paper is a follow-up study of our earlier work \cite{Liu2023}, in which we studied perfect cloaking for concentric disks and confocal ellipses structures using analytic solution and approximate cloaking for general shapes by optimal method.  In this present work, we address the concept of enhanced near-cloaking in the context of microscale hydrodynamics using electro-osmosis by the perturbation theory. Our study is motivated by the physics literature \cite{Boyko2021}, in which authors studied the enhanced near-cloaking for annulus under a linear background field. The purpose of this paper is to extend the technique to a more general background field and study the case based on the perturbation of confocal ellipses simultaneously under this general background field. In order to
achieve enhanced invisibility, our construction of the near-cloaking structure is exactly different from the construction in \cite{Kohn2008}, which is linked closely to the study of a Poisson problem with a small volume defect. However, the near-cloaking in this paper is related to a small boundary defect. To the best of our knowledge, this is the first work to consider near-cloaking strategies by boundary perturbation in mathematics.  One could employ our constructions to the conductivity problem and scattering problem to obtain enhanced near-cloaking structures. This is left for future investigations. To
provide a global view of our study, the major contributions of this work can be summarised as
follows.
\begin{itemize}
  \item Based on the physics literature \cite{Boyko2021}, we give a rigorous mathematical definition of hydrodynamic near-cloaking. Especially we establish a unified mathematical framework for enhanced hydrodynamic near-cloaking with general geometry by utilizing asymptotic analysis theory.
  \item We rigorously derive the asymptotic expansion of the perturbed electric and pressure fields for the general domain. The representation formula of the solution to the first-order coupled system is obtained, which gives a quantitative analysis of the perturbed hydrodynamic model. Furthermore, the general conditions for enhanced hydrodynamic near-cloaking are derived, which reveal the inner relationship between the shapes of the core (object) and shell (cloaking region).
  \item By using the uniform approach---layer potential theory, we establish sharp conditions that can ensure the occurrence of the enhanced hydrodynamic near-cloaking for annulus (radial case) and confocal ellipses
  (non-radial case). Especially, for the confocal ellipses case which is not considered in \cite{Boyko2021}, we introduce an additional elliptic coordinates technique to overcome the difficulty caused by non-radial geometry.
\end{itemize}

The paper is organized as follows. We begin with the mathematical setting of the problem and briefly recall some known results in Section \ref{sec-setting-problem}. This section also makes precise the notion of near-cloaking and its connection to perfect cloaking followed by the construction of cloaking zeta potential. In Section \ref{Sec-Sensitivity-analysis} we rigorously derive the asymptotic expansion of the perturbed electric and pressure fields by two different methods. Section \ref{sec-hnc} is devoted to the study of the enhanced near-cloaking conditions
by the analytical method.  In Section \ref{sec-num-sim}, we present some numerical examples to illustrate our theoretical results. The paper is concluded in Section \ref{sec-conclusion} with some relevant discussions.

\section{Mathematical setting of the problem and preliminaries}\label{sec-setting-problem}
We consider a pillar-shaped object with arbitrary cross-sectional shape confined between the walls of a Hele-Shaw
cell and subjected to a non-uniform electro-osmotic flow with an externally imposed mean velocity $u_{ext}$ and electric field $\textbf{E}$ along the $x$-axis.
Applying the lubrication approximation, we average over the depth of the cell and reduce the analysis to a two-dimensional problem. The governing equations for the depth-averaged
velocity $u_{aver}$, the pressure $p$, and the electrostatic potential $\varphi$ are (see \cite{Liu2023})
\begin{equation}\label{dimensionless-equations}
 \bm u_{aver}  =- \frac{1}{12} \nabla p + \bm{u}_{slip},\quad \Delta p = -12 \nabla \varphi \cdot \nabla \zeta_{mean}  \quad \mbox{and} \quad \Delta \varphi = 0,
\end{equation}
where $\bm{u}_{slip}=-\zeta_{mean} \nabla \varphi$ is the depth-averaged Helmholtz-Smoluchowski slip velocity \cite{Boyko2021}. In addition, we assume that no penetration and insulation
occur at the object's surface and that the velocity and electric fields far from the object tend to a uniform externally applied velocity and an electric field.

To mathematically state the problem, let $\Omega$ be a bounded domain in $ \mathbb{R}^2$ and let $D$ (object) be a domain whose closure is contained in $\Omega$. Throughout this paper, we assume that $\Omega$ and $D$ are of class $C^2$.
Let $H(x)$ and $P(x)$ be the harmonic function  in $\mathbb{R}^2$, denoting the background electrostatic potential and pressure field, and $\overline{D}\subset\Omega$. For a given constant parameter $\zeta_0$, the  zeta potential distribution in $ \mathbb{R}^2\setminus\overline{D}$ is given by
\begin{equation*}
 \zeta_{mean}=
  \begin{cases}
\ds \zeta_0, \quad \mbox{in } \Omega\setminus\overline{D},\\
\ds 0, \quad\mbox{in } \mathbb{R}^2\setminus\overline{\Omega}.
  \end{cases}
\end{equation*}
We may consider the configuration as an insulation and no-penetration core coated by the shell (control region) $\Omega \setminus\overline{D}$ with zeta potential $ \zeta_0$. Note that the continuity of the pressure and normal velocity is satisfied on $\p \Omega$.
From the equations \eqref{dimensionless-equations} and the assumption of boundary conditions, the governing equations for non-uniform electro-osmotic flow via a Hele-Shaw configuration are modeled as follows:
\begin{align}\label{electro-osmotic equation-original}
\begin{cases}
\ds \Delta \varphi = 0  & \mbox{in } \mathbb{R}^2\setminus\overline{D}, \ms \\
\ds \frac{\partial \varphi}{\partial \nu} = 0  & \mbox{on } \partial D, \ms \\
\ds  \varphi = H(x) + O(|x|^{-1}) & \mbox{as } |x|\rightarrow + \infty,\ms\\
\ds \Delta p= 0 & \mbox{in }  \mathbb{R}^2\setminus\overline{D}, \ms\\
\ds \frac{\partial p}{\partial \nu} = 0  & \mbox{on } \partial D, \ms\\
\ds p|_{+}=p|_{-} & \mbox{on }  \partial \Omega,\ms\\
\ds \frac{\partial p}{\partial \nu} \Big|_{+} - \frac{\partial p}{\partial \nu} \Big|_{-} = 12 \zeta_0\frac{\partial \varphi}{\partial \nu} & \mbox{on } \partial \Omega,\ms \\
\ds  p = P(x) + O(|x|^{-1}) & \mbox{as } |x|\rightarrow +\infty,
\end{cases}
\end{align}
where $\frac{\partial }{\partial \nu_{D}}$ and $ \frac{\partial }{\partial \nu_{\Omega}}$ denote the outward normal derivative on the boundary $\partial D$ and $\partial \Omega$, and the notations $p|\pm$ and $\frac{\partial p}{\partial \nu}\big|_{\pm}$ denote the traces on $\p \Omega$ from the outside and inside of $\Omega$, respectively.

In this paper, we consider an enhanced near-cloaking scheme of the hydrodynamic pressure field by perturbing the inner and outer boundaries of the perfect hydrodynamic cloaking structure. As discussed in the introduction,
this scheme was considered in the physics literature \cite{Boyko2021} for deformed cylinder under a linear background field. For self-containedness, we briefly discuss the perfect hydrodynamic cloaking
for the proposed enhanced near-cloaking scheme in the sequel, which can be found in our earlier work \cite{Liu2023}.

We are now in a position to introduce the definition of perfect hydrodynamic cloaking.
\begin{defn}\label{def-cloaking}
The triples $\{D, \Omega; \zeta_0\}$ is said to be a perfect hydrodynamic cloaking if
\begin{equation}\label{cond-cloaking}
\bm u_{aver}  = U \quad  \mbox{in }  \mathbb{R}^2\setminus\overline{\Omega},
\end{equation}
where $U= - \nabla P / 12$ denotes a uniform externally applied velocity.
\end{defn}
Outside the cloaking region, because the zeta potential is zero, from \eqref{dimensionless-equations} the pressure is related to the velocity field through $\bm{u}_{aver} = -\nabla p/12 $ subjected to the
boundary condition $p(x)=P(x)$ as $|x|\rightarrow\infty$. Therefore, according to the Definition \ref{def-cloaking}, the condition (\ref{cond-cloaking}) can be expressed in terms of the
pressure as
\begin{align*}
p(x)=P(x), \quad x\in \mathbb{R}^2\setminus\overline{\Omega}.
\end{align*}

Here and throughout this paper, we assume that $\{D, \Omega; \zeta_0\}$ is a perfect hydrodynamic cloaking. Hence $p(x)=P(x), \ x\in \mathbb{R}^2\setminus\overline{\Omega}$.  We next consider the inner and outer boundaries perturbations of this perfect cloaking structure. For small $\epsilon \in \mathbb{R}_{+}$, we let $\partial D_\epsilon$ and $\partial \Omega_\epsilon$ be an $\epsilon$ -perturbation of $D$ and $\Omega$, respectively, i.e.,
\begin{align}
\label{pertur-D}
\partial D_\epsilon:=\{\tilde{x}=x+\epsilon f(x)\nu_D(x),\ \ &x\in \partial D\},\\
\partial \Omega_\epsilon:=\{\tilde{x}=x+\epsilon g(x)\nu_{\Omega}(x), \ \ &x\in \partial \Omega\},\label{pertur-Omega}
\end{align}
where $\nu_D$ and $\nu_{\Omega}$ are the outward unit normal vector to $\p D$ and $\p \Omega$; $f\in \mathcal{C}^1(\partial D)$ and $g\in \mathcal{C}^1(\partial \Omega)$ are called shape function of $D$ and $\Omega$ respectively. The $\epsilon$ -perturbation of $D$ can be treated as that is formed by tailoring delicately the basic shape of object $D$, and so is $\partial \Omega_\epsilon$.

Let $\varphi_\epsilon$ and $p_\epsilon$  be the solution to
\begin{align}\label{electro-osmotic equation}
\begin{cases}
\ds \Delta \varphi_\epsilon = 0  & \mbox{in } \mathbb{R}^2\setminus\overline{D_\epsilon}, \ms \\
\ds \frac{\partial \varphi_\epsilon}{\partial \nu_{D_\epsilon}} = 0  & \mbox{on } \partial D_\epsilon, \ms \\
\ds  \varphi_\epsilon = H(x) + O(|x|^{-1}) & \mbox{as } |x|\rightarrow + \infty,\ms\\
\ds \Delta p_\epsilon= 0 & \mbox{in }  \mathbb{R}^2\setminus\overline{D_\epsilon}, \ms\\
\ds \frac{\partial p_\epsilon}{\partial \nu_{D_\epsilon}} = 0  & \mbox{on } \partial D_\epsilon, \ms\\
\ds p_\epsilon|_{+}=p_\epsilon|_{-} & \mbox{on }  \partial \Omega_\epsilon,\ms\\
\ds \frac{\partial p_\epsilon}{\partial \nu_{\Omega_\epsilon}} \Big|_{+} - \frac{\partial p_\epsilon}{\partial \nu_{\Omega_\epsilon}} \Big|_{-} = 12 \zeta_0\frac{\partial \varphi_\epsilon}{\partial \nu_{\Omega_\epsilon}} & \mbox{on } \partial \Omega_\epsilon,\ms \\
\ds  p_\epsilon = P(x) + O(|x|^{-1}) & \mbox{as } |x|\rightarrow +\infty,
\end{cases}
\end{align}
where the zeta potential value remains the same and is given by
\begin{equation*}
 \zeta_{mean}=
  \begin{cases}
\ds \zeta_0, \quad \mbox{in } \Omega_\epsilon \setminus\overline{D_\epsilon},\\
\ds 0, \quad\mbox{in } \mathbb{R}^2\setminus\overline{\Omega_\epsilon}.
  \end{cases}
\end{equation*}
Then the hydrodynamic near-cloaking design (HNCD) problem is considered as follows.
\begin{defn}[HNCD]\label{HCDP}
Assume that the shape function $f\in \mathcal{C}^1(\partial D)$ is a priori known, find the shape function $g\in \mathcal{C}^1(\partial \Omega)$ such that
\begin{align}
\label{pcloak}
p_\epsilon(x)-P(x)=\mathcal{E}(x,\epsilon),\ \ \ \text{for}\ x\in \mathbb{R}^2\setminus\overline{\Omega},
\end{align}
where the error term (or scattering) $\mathcal{E}(x,\epsilon)$ satisfies $\mathcal{E}(x,\epsilon)\rightarrow 0$ as $\epsilon\rightarrow 0$. In particular, for $\mathcal{E}(x,\epsilon)=q_0(x)+q_1(x)\epsilon+q_2(x)\epsilon^2+\cdots$, if $q_0(x)=\cdots=q_{N-1}(x)=0$, and $q_j(x)$ is uniformly bounded for $j>N-1$, we call such a design scheme
hydrodynamic near-cloaking design (HNCD) of order $N$ or $N$-order HNCD is given. The $\infty$-order HNCD ($N=\infty$) is called perfect hydrodynamic cloaking design (PHCD). Furthermore, assume that $N-1$-order HNCD is given, and
$|q_{N-1}(x)|\leq |Q_{N-1}(x)|$, where $|Q_{N-1}(x)| = |Q_{N-1}^{(0)}(x)|+|Q_{N-1}^{(1)}(x)|$. If $Q_{N-1}^{(0)}(x)=0$ or $Q_{N-1}^{(1)}(x)=0$, then we call it weak $N$-order HNCD.
\end{defn}

\begin{rem}
(i) In fact, $q_0(x)\equiv 0$ in (\ref{pcloak}), since basic structure $\{D, \Omega; \zeta_0\}$ satisfies perfect cloaking, i.e., $1$-order HNCD always holds;\\
(ii) From Definition \ref{HCDP}, it is easy to see the weak $N$-order HNCD is an intermedium between $N-1$-order HNCD and $N$-order HNCD, i.e., weak $N$-order HNCD must be $N-1$-order HNCD, but may not be $N$-order HNCD;\\
(iii) Throughout this paper, since the lower order terms are vanishing in Definition \ref{HCDP}, we call the $N$-order and weak-$N$-order hydrodynamic near-cloaking  enhanced hydrodynamic near-cloaking for $N\geq2$.
\end{rem}

According to Definition \ref{HCDP}, we will utilize the asymptotic analysis
with respect to $\epsilon$ and find the shape function $g$ from the priori known shape function $f$, such that (\ref{pcloak}) holds for $N=2$.

We are now in a position to present the first main result of this paper on asymptotic expansions. The proofs are given in Subsections \ref{sec-FE} and \ref{Lptm}, respectively.
\begin{thm}\label{thm-expansion}
Let $\varphi_\epsilon$  and $p_\epsilon$ be the solutions to (\ref{electro-osmotic equation}). For $x\in  \mathbb{R}^2\setminus \overline{D}$, the following pointwise asymptotic expansions hold
\begin{align*}
  \varphi_\epsilon(x)=\varphi(x)+\epsilon\varphi^{(1)}(x)+O(\epsilon^2),
\end{align*}
and
\begin{align}
\label{p-expansion}
  p_\epsilon(x)=p(x)+\epsilon p^{(1)}(x)+O(\epsilon^2),
\end{align}
  where the remainder $O(\epsilon^2)$ depends only on the $\mathcal{C}^2$-norm of $\partial D$, $\partial \Omega$ and $\mathcal{C}^1$-norm of $f$ and $g$. $\varphi$  and $p$ are the solutions to (\ref{electro-osmotic equation-original}), and the pair $(\varphi^{(1)}, p^{(1)})$ is the unique solution to the following first-order coupled system
\begin{align}\label{first-order-equation}
\begin{cases}
\ds \Delta \varphi ^{(1)}= 0  \quad \ &\mbox{in} \  \mathbb{R}^2\setminus\overline{D},\ms \\
\ds \frac{\partial \varphi^{(1)}}{\partial \nu_{D}} =E \quad &\mbox{on } \partial D,\ms \\
\ds  \varphi^{(1)} = O(|x|^{-1})\ \ &as\ |x|\rightarrow +\infty,\ms \\
\ds \Delta p^{(1)}= 0 \quad \ &\mbox{in} \  \mathbb{R}^2\setminus\overline{D},\ms \\
\ds \frac{\partial p^{(1)}}{\partial \nu_{D}} = A \quad &\mbox{on } \partial D,\ms \\
\ds p^{(1)}|_{+}-p^{(1)}|_{-}=B \quad &\mbox{on} \ \partial \Omega,\ms \\
\ds \frac{\partial p^{(1)}}{\partial \nu_{\Omega}} \big|_{+} - \frac{\partial p^{(1)}}{\partial \nu_{\Omega}} \big|_{-} = C \quad &\mbox{on } \partial \Omega, \ms \\
\ds  p^{(1)} = O(|x|^{-1})\ \ &as\ |x|\rightarrow +\infty,
\end{cases}
\end{align}
with
\begin{align}\label{boundary-term}
\begin{cases}
\ds E=f'\frac{\partial  \varphi}{\partial T_{D}}-  f \frac{\partial^2  \varphi}{\partial \nu^2_{D}} \quad &\mbox{on } \partial D,\ms \\
\ds A=f'\frac{\partial  p}{\partial T_{D}}-  f \frac{\partial^2  p}{\partial \nu^2_{D}}\quad &\mbox{on } \partial D, \ms \\
\ds B=g\Big(\frac{\partial p}{\partial\nu_{\Omega}}\Big|_{-}-\frac{\partial p}{\partial\nu_{\Omega}}\Big|_{+}\Big)\quad &\mbox{on} \ \partial \Omega,\ms \\
\ds C= g\Big(\frac{\partial^2  p}{\partial \nu^2_{\Omega}}\Big|_{-}- \frac{\partial^2  p}{\partial \nu^2_{\Omega}}\Big|_{+}\Big) + 12\zeta_0 \Big( \frac{\partial  \varphi^{(1)}}{\partial \nu_{\Omega}} +g \frac{\partial^2  \varphi}{\partial \nu^2_{\Omega}}-g'\frac{\partial  \varphi}{\partial T_{\Omega}}\Big)\quad &\mbox{on } \partial \Omega.
\end{cases}
\end{align}
\end{thm}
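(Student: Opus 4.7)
The plan is to proceed by a direct boundary perturbation argument: postulate the two-term ansatz $\varphi_\epsilon = \varphi + \epsilon\varphi^{(1)} + \epsilon^2 R^\varphi_\epsilon$ and $p_\epsilon = p + \epsilon p^{(1)} + \epsilon^2 R^p_\epsilon$, transfer every boundary/transmission condition in \eqref{electro-osmotic equation} from $\partial D_\epsilon,\partial\Omega_\epsilon$ back to $\partial D,\partial\Omega$, and then match powers of $\epsilon$. The bulk Laplace equations and the decay at infinity are inherited immediately by each coefficient field, so all the work is at the boundaries.

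The two technical ingredients I would isolate at the start are: (i) a pointwise Taylor expansion $u(x+\epsilon h(x)\nu(x)) = u(x) + \epsilon h(x)\,\tfrac{\partial u}{\partial \nu}(x) + \tfrac{1}{2}\epsilon^2 h^2(x)\,\tfrac{\partial^2 u}{\partial \nu^2}(x) + O(\epsilon^3)$ valid for any $C^2$ function $u$ on one side of $\partial D$ or $\partial\Omega$, and (ii) the expansion of the perturbed outward unit normal, $\nu_{D_\epsilon}(x+\epsilon f\nu_D) = \nu_D(x) - \epsilon f'(x)\,T_D(x) + O(\epsilon^2)$, with the analogous formula for $\nu_{\Omega_\epsilon}$. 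Combining (i)--(ii) with $\partial_\nu u = \nabla u\cdot\nu$ and the zeroth-order conditions $\partial_{\nu_D}\varphi = \partial_{\nu_D}p = 0$ on $\partial D$, the $\epsilon^1$-coefficients of the Neumann conditions on $\partial D_\epsilon$ collapse to exactly the expressions $E$ and $A$ in \eqref{boundary-term}.

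The main obstacle is transferring the two transmission conditions on $\partial\Omega_\epsilon$, because each side of $\partial\Omega$ must be expanded separately and the potential field is coupled in. For $p_\epsilon|_+ = p_\epsilon|_-$ on $\partial\Omega_\epsilon$, applying (i) from each side, using the zero-order continuity $p|_+ = p|_-$, and collecting the $O(\epsilon)$ terms yields precisely $B$. For the jump condition, expanding $\frac{\partial p_\epsilon}{\partial\nu_{\Omega_\epsilon}}\big|_\pm$ with both (i) and (ii) produces the $g\bigl(\partial^2_{\nu_\Omega}p|_- - \partial^2_{\nu_\Omega}p|_+\bigr)$ contribution, while the same expansion applied to $12\zeta_0\,\frac{\partial\varphi_\epsilon}{\partial\nu_{\Omega_\epsilon}}$ produces $12\zeta_0\bigl(\partial_{\nu_\Omega}\varphi^{(1)} + g\,\partial^2_{\nu_\Omega}\varphi - g'\,\partial_{T_\Omega}\varphi\bigr)$; summed these reproduce $C$. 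It is essential here that the zeroth-order transmission identity $[\partial_{\nu_\Omega}p] = 12\zeta_0\,\partial_{\nu_\Omega}\varphi$ has already been used to eliminate the $O(1)$ contributions; the tangential term $-g'\partial_{T_\Omega}\varphi$ in $C$ then arises solely from the normal-vector perturbation in (ii), and would be missed without it.

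Well-posedness of the first-order coupled system and the $O(\epsilon^2)$ remainder bound are then obtained by the layer-potential techniques promised in Subsection~\ref{Lptm}: $\varphi^{(1)}$ is represented as a single-layer potential on $\partial D$ whose density solves a Fredholm second-kind integral equation with the Neumann--Poincar\'e operator, invertible on mean-zero data compatible with the compatibility condition $\int_{\partial D} E\,d\sigma = 0$ (which holds since $\varphi$ is harmonic outside $D$ with $\partial_{\nu_D}\varphi=0$); then $p^{(1)}$ is represented as a sum of single-layer potentials on $\partial D$ and $\partial\Omega$ whose densities satisfy a coupled $2\times 2$ boundary integral system solved by Fredholm theory, with $\varphi^{(1)}$ entering $C$ as a known source. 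Finally, substituting the truncated ansatz into \eqref{electro-osmotic equation} and estimating the residual boundary data by $C\epsilon^2$ using the next Taylor terms in (i)--(ii), the well-posedness of the exterior coupled transmission problem yields the claimed remainder, with the constant depending only on $\|\partial D\|_{C^2},\|\partial\Omega\|_{C^2},\|f\|_{C^1},\|g\|_{C^1}$.
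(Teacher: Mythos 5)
Your two-stage plan --- a formal boundary-perturbation derivation that matches powers of $\epsilon$, followed by a layer-potential argument for well-posedness of the first-order system and the $O(\epsilon^2)$ remainder --- is essentially the same strategy the paper carries out in Subsections~\ref{sec-FE} and~\ref{Lptm}. One step in the formal derivation should be made explicit: when you expand $\partial_{\nu_{\Omega_\epsilon}}p_\epsilon|_\pm$ with (i)--(ii), the normal-vector perturbation also produces terms $-g'\,\partial_{T_\Omega}p|_\pm$ on each side of $\partial\Omega$, and you must invoke the continuity $\partial_{T_\Omega}p|_+=\partial_{T_\Omega}p|_-$ (which follows from the zeroth-order Dirichlet continuity $p|_+=p|_-$) so that these cancel in the jump; otherwise spurious tangential terms seem to survive in $C$, and the paper flags this cancellation explicitly. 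On the rigorous side your proposal differs mildly in flavor: substituting the truncated ansatz into \eqref{electro-osmotic equation} and estimating a residual requires some care because that residual is posed on the perturbed curves $\partial D_\epsilon,\partial\Omega_\epsilon$ rather than on $\partial D,\partial\Omega$, and one must control the change of domain. The paper instead expands the layer-potential densities $\phi_\epsilon,\psi_{i,\epsilon},\psi_{e,\epsilon}$ in $L^2(\partial D)$ and $L^2(\partial\Omega)$ to first order (Lemmas~\ref{lem-KD-epsilon-expansion} and~\ref{density asy}, and the bound~\eqref{p-density-expansion}) and then reads the pointwise expansions of $\varphi_\epsilon,p_\epsilon$ directly from the single-layer representations, so the uniform $O(\epsilon^2)$ remainder is inherited from the $L^2$ density estimate and all estimates stay on the unperturbed boundaries. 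Both routes are legitimate; the density-expansion route is the cleaner one here.
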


Note that $p=P$ in $\mathbb{R}^2\setminus\overline{D}$ since basic structure $\{D, \Omega; \zeta_0\}$ satisfies perfect cloaking. From the Definition \ref{HCDP} and asymptotic formula (\ref{p-expansion}), it is easy to obtain the following theorem which plays a central role in this paper.
\begin{thm}\label{thm-cloaking1}
Let $p$ and $p^{(1)}$ be defined in Theorem \ref{thm-expansion}. Given the shape function $f\in \mathcal{C}^1(\partial D)$, if there is a shape function $g\in \mathcal{C}^1(\partial \Omega)$ such that
\begin{align*}
p^{(1)}=0,\quad \ \mbox{in} \  \mathbb{R}^2\setminus\overline{\Omega},
\end{align*}
then 2-order HNCD is given.
\end{thm}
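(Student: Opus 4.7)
The plan is to verify directly from Definition \ref{HCDP} that the scattering term $\mathcal{E}(x,\epsilon) = p_\epsilon(x) - P(x)$ has vanishing $q_0$ and $q_1$ coefficients on $\mathbb{R}^2\setminus\overline{\Omega}$, with the remaining coefficients uniformly bounded in $\epsilon$. The argument will rest almost entirely on the asymptotic expansion \eqref{p-expansion} already established in Theorem \ref{thm-expansion}, together with the standing perfect hydrodynamic cloaking assumption on the basic structure $\{D, \Omega; \zeta_0\}$.

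First, I would fix $x \in \mathbb{R}^2 \setminus \overline{\Omega}$ and substitute the pointwise expansion \eqref{p-expansion} to obtain
\begin{equation*}
\mathcal{E}(x,\epsilon) = \bigl(p(x) - P(x)\bigr) + \epsilon\, p^{(1)}(x) + O(\epsilon^2).
\end{equation*}
The perfect cloaking hypothesis on $\{D,\Omega;\zeta_0\}$, recorded just before the statement of the theorem and reiterated in the remark following Definition \ref{HCDP}, forces $p(x) = P(x)$ on $\mathbb{R}^2 \setminus \overline{\Omega}$. Matching powers of $\epsilon$ with the series representation $\mathcal{E}(x,\epsilon) = q_0(x) + q_1(x)\epsilon + q_2(x)\epsilon^2 + \cdots$ from Definition \ref{HCDP} then yields $q_0(x) \equiv 0$ (recovering the 1-order HNCD that always holds) and $q_1(x) = p^{(1)}(x)$ on $\mathbb{R}^2 \setminus \overline{\Omega}$.

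Next, invoking the hypothesis $p^{(1)}(x) = 0$ on $\mathbb{R}^2 \setminus \overline{\Omega}$ forces $q_1 \equiv 0$ there. To finish, I would read off the uniform boundedness of the higher coefficients $q_j$ ($j \geq 2$) directly from the remainder control stated in Theorem \ref{thm-expansion}: the $O(\epsilon^2)$ term depends only on the $\mathcal{C}^2$-norms of $\partial D$, $\partial \Omega$ and the $\mathcal{C}^1$-norms of $f$ and $g$, all of which are fixed data of the problem. Hence $q_j$ for $j \geq 2$ remains uniformly controlled, and $q_0 = q_1 = 0$ together with this uniform bound is precisely the definition of 2-order HNCD.

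The main obstacle for this theorem lies not in its own proof but in Theorem \ref{thm-expansion}, whose rigorous derivation of the first-order coupled system \eqref{first-order-equation} with boundary data \eqref{boundary-term} requires the shape-derivative and layer-potential computations deferred to Sections \ref{Sec-Sensitivity-analysis}. Once that expansion is in place, the passage to the near-cloaking conclusion $p^{(1)}=0$ outside $\Omega$ is essentially a definitional unpacking; the role of the present theorem is to make transparent how the first-order coupled system supplies the design constraint that will later be used to determine the admissible shape function $g$ from the prescribed $f$.
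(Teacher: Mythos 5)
Your proposal is correct and takes essentially the same approach the paper intends. The paper does not spell out a separate proof — it simply remarks that the theorem follows "from Definition \ref{HCDP} and asymptotic formula (\ref{p-expansion})" once one notes that perfect cloaking of the base structure gives $p=P$ outside $\Omega$, and your argument is exactly this definitional unpacking: $q_0\equiv 0$ from perfect cloaking, $q_1=p^{(1)}\equiv 0$ by hypothesis, and the remainder control in Theorem \ref{thm-expansion} supplies the required uniform bound on the higher-order coefficients.
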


\begin{rem}
Notice that the shape functions $f$ and $g$ are implicit in the cloaking condition: $p^{(1)}=0$ ($x\in\mathbb{R}^2\setminus\overline{\Omega}$). Hence, it actually reveals the inner relationship between the shapes of the object and cloaking region.
\end{rem}

\begin{rem}
According to Definition \ref{HCDP}, when $p^{(1)}$ can be split into two parts $p^{(1,0)}$ and $p^{(1,1)}$ in Theorem \ref{thm-cloaking1}, and $p^{(1,0)}=0$ or $p^{(1,1)}=0$, the weak 2-order HNCD occurs.
\end{rem}

To that end, the rest of main results in this paper are given in the following theorems. The constructive proofs are given in Subsections \ref{subsec-annulus} and \ref{subsec-ellipse}, respectively.
\begin{thm}\label{thm-near-cloaking-circle}
Let the domains $D$ and $\Omega$ be concentric disks of radii $r_i$ and $r_e$, where $r_e > r_i$.
Let $H(x) = r^n\cos(n\theta)$ and $P(x)=12 r^n\cos(n\theta)$  (or $H(x) = r^n\sin(n\theta)$ and $P(x)=12 r^n\sin(n\theta)$) for $n\geq 1$. If the shape function $f\in \mathcal{C}^1(\partial D)$, then we can construct a shape function $g\in \mathcal{C}^1(\partial \Omega)$ such that 2-order HNCD can be achieved.
\end{thm}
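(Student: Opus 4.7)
The plan is to exploit the explicit separability of every ingredient of Theorem~\ref{thm-expansion} in the concentric-disk geometry and carry out a mode-by-mode Fourier analysis in polar coordinates $(r,\theta)$. First I compute the unperturbed fields: since $\partial_\nu\varphi=0$ on $\{r=r_i\}$ and $\varphi\to r^n\cos(n\theta)$ at infinity, separation of variables gives $\varphi(r,\theta)=(r^n+r_i^{2n}r^{-n})\cos(n\theta)$; the assumption that $\{D,\Omega;\zeta_0\}$ is a perfect cloak then forces $p=12r^n\cos(n\theta)$ outside $\Omega$ and $p=\alpha(r^n+r_i^{2n}r^{-n})\cos(n\theta)$ inside $\Omega\setminus\overline D$, with $\alpha$ and $\zeta_0$ fixed by the transmission conditions at $r=r_e$. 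Consequently every derivative of $\varphi$ or $p$ entering \eqref{boundary-term} on $\partial D$ or $\partial\Omega$ is a known scalar multiple of $\cos(n\theta)$ or $\sin(n\theta)$.

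Next I expand the shape functions as
\[
f(\theta)=\sum_{k\ge 0}\bigl(a_k\cos(k\theta)+b_k\sin(k\theta)\bigr),\qquad g(\theta)=\sum_{k\ge 0}\bigl(c_k\cos(k\theta)+d_k\sin(k\theta)\bigr),
\]
and insert these into \eqref{boundary-term}. The product-to-sum identities turn every factor of the form $f(\theta)\cos(n\theta)$ or $g(\theta)\cos(n\theta)$ into a Fourier series whose mode-$m$ coefficient depends linearly on the Fourier modes $m+n$ and $|m-n|$ of $f$ or $g$. I then solve the exterior Neumann problem for $\varphi^{(1)}$ in $\{r>r_i\}$ via the ansatz $\varphi^{(1)}=\sum_m r^{-m}\bigl(\tilde a_m\cos(m\theta)+\tilde b_m\sin(m\theta)\bigr)$, reading off $\tilde a_m,\tilde b_m$ from the Neumann data $E$; substituting the trace $\partial_\nu\varphi^{(1)}|_{\partial\Omega}$ into the last term of $C$ in \eqref{boundary-term} renders $C$ a concrete Fourier series as well.

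With $A$, $B$, $C$ in hand I use the ansatz $p^{(1)}=\sum_m(\alpha_m r^m+\beta_m r^{-m})\cos(m\theta)$ on $r_i<r<r_e$ and $p^{(1)}=\sum_m\gamma_m r^{-m}\cos(m\theta)$ on $r>r_e$ (the sine components are treated analogously). At each mode $m$ the Neumann condition on $\partial D$ and the two transmission conditions on $\partial\Omega$ give a $3\times 3$ linear system for $(\alpha_m,\beta_m,\gamma_m)$ driven by the mode-$m$ components of $A$, $B$, $C$. The cloaking requirement of Theorem~\ref{thm-cloaking1} is $\gamma_m=0$ for every $m$; eliminating $\alpha_m$ and $\beta_m$ under this constraint reduces the three equations to a single scalar compatibility identity between the mode-$m$ coefficients of $A$, $B$ and $C$. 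Unrolling the product-to-sum identities transforms this identity into a linear relation linking the unknown Fourier coefficients $\{c_k,d_k\}$ of $g$ to the known coefficients $\{a_k,b_k\}$ of $f$, which can be solved explicitly.

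The main obstacle is the final solvability and regularity argument. The determinant controlling the mode-$m$ inversion reduces, after simplification, to a positive multiple of $r_e^{2m}-r_i^{2m}$, which is nonzero because $r_e>r_i$, so the inversion is always legitimate. Moreover this denominator grows like $r_e^{2m}$, preventing any high-frequency amplification, so the resulting coefficients satisfy an estimate of the form $|c_k|+|d_k|\lesssim|a_{k+n}|+|a_{|k-n|}|+|b_{k+n}|+|b_{|k-n|}|$. This bound transfers the $\mathcal C^1$-regularity from $f$ to $g$, and invoking Theorem~\ref{thm-cloaking1} completes the proof.
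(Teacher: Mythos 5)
Your overall strategy matches the paper's: work mode by mode in Fourier variables, write the first-order fields by separation of variables (the paper does the equivalent computation through explicit single-layer potentials and the diagonal action of $\mathcal{K}_D^*$ and $\mathcal{S}_D$ on Fourier modes), and force the exterior scattered mode to vanish. Up to the point where you extract, per mode $m$, a scalar compatibility identity, you are retracing the paper's argument in different but equivalent notation.

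The gap is in the last paragraph. Because the boundary data $B$ and $C$ on $\partial\Omega$ contain the products $g(\theta)\cos(n\theta)$ and $g(\theta)\sin(n\theta)$ (and $\varphi^{(1)}$, which brings in $f$ at the shifted modes), the scalar compatibility condition at mode $m$ involves the Fourier coefficients of $g$ at \emph{both} $m-n$ and $m+n$. It is therefore a two-term recurrence in the $g$-coefficients (the paper's equations~\eqref{recursive equations-d-cos}--\eqref{recursive equations-h-2-cos}, reindexed to link $d_m$ with $d_{m+2n}$), not a one-shot linear system that can be inverted mode by mode to give $c_k,d_k$ explicitly in terms of $\{a_j,b_j\}$. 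Treating it as an inversion and asserting a bound of the form $|c_k|+|d_k|\lesssim|a_{k+n}|+|a_{|k-n|}|+\cdots$ is not justified. Indeed, if you solve the recursion forward (from low to high modes) the coefficient multiplying $d_m$ in the formula for $d_{m+2n}$ behaves like $(r_e/r_i)^{2n}>1$, so the coefficients grow geometrically and the $\mathcal{C}^1$-transfer you claim fails. The paper sidesteps this by exploiting the freedom in the recursion: with $f$ having finitely many nonzero Fourier modes (so that $a_m=b_m=0$ for $m>m_{\max}$) one may impose $d_{m_{\max}+1}=\cdots=d_{m_{\max}+2n}=0$ and run the recursion \emph{backward}, which is contractive (the backward coefficient is $\approx(r_i/r_e)^{2n}<1$) and yields a finite, hence $\mathcal{C}^1$, shape function $g$. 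Your argument is missing this choice of ``closure'' for the recursion. Incidentally, the determinant you quote, $\propto r_e^{2m}-r_i^{2m}$, does not appear; the relevant factors in the mode-$m$ elimination are of the form $r_i^{2m'}+r_e^{2m'}$, which is always positive, so nonvanishing still holds but for a different reason.
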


\begin{thm}\label{thm-near-cloaking-ellipse}
Let the domains $D$ and $\Omega$ be confocal ellipses of elliptic radii $\xi_i$ and $\xi_e$, where $\xi_e > \xi_i$.
Let $H(x) = \cosh(n\xi)\cos(n\eta)$ and $P(x)=12 \cosh(n\xi)\sin(n\eta)$ (or $H(x) = \sinh(n\xi)\sin(n\eta)$ and $P(x)=12 \sinh(n\xi)\sin(n\eta)$) for $n\geq 1$. If the shape function $f\in \mathcal{C}^1(\partial D)$, then we can construct a shape function $g\in \mathcal{C}^1(\partial \Omega)$ such that weak 2-order HNCD can be achieved.
\end{thm}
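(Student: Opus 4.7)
The plan is to exploit the separability of the Laplacian in elliptic coordinates $(\xi,\eta)$ defined through $x_1+\mathrm{i}x_2=c\cosh(\xi+\mathrm{i}\eta)$, in which $\partial D=\{\xi=\xi_i\}$ and $\partial\Omega=\{\xi=\xi_e\}$ become coordinate curves. In this coordinate system the harmonic background modes $\cosh(n\xi)\cos(n\eta)$, $\sinh(n\xi)\sin(n\eta)$, $\sinh(n\xi)\cos(n\eta)$, $\cosh(n\xi)\sin(n\eta)$, together with the decaying ``multipoles'' $e^{-n\xi}\cos(n\eta)$, $e^{-n\xi}\sin(n\eta)$, form a complete basis, and the unperturbed fields $\varphi$ and $p$ corresponding to the perfect cloak $\{D,\Omega;\zeta_0\}$ already possess closed-form expansions in this basis from our earlier work \cite{Liu2023}.

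First, I would rewrite the first-order coupled system (\ref{first-order-equation})--(\ref{boundary-term}) in elliptic coordinates. Because the outward normals $\nu_D,\nu_\Omega$ along coordinate ellipses are proportional to $\partial_\xi$ with Lam\'e scale factor $h(\xi,\eta)=c\sqrt{\sinh^2\xi+\sin^2\eta}$, the Neumann and jump data $E,A,B,C$ become explicit trigonometric series in $\eta$ once the background solutions are inserted. Similarly, I would expand $f(\eta)=\sum_{k\geq 1}(a_k\cos k\eta+\tilde a_k\sin k\eta)$ and $g(\eta)=\sum_{k\geq 1}(b_k\cos k\eta+\tilde b_k\sin k\eta)$, seek $\varphi^{(1)}$ and $p^{(1)}$ in separated form--interior $\sinh/\cosh$ modes inside $\Omega\setminus\overline{D}$, decaying $e^{-n\xi}$ modes in $\mathbb{R}^2\setminus\overline{\Omega}$--and translate the transmission and Neumann conditions at $\xi=\xi_i,\xi_e$ into mode-by-mode linear systems. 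This parallels the annulus treatment of Subsection \ref{subsec-annulus} but with the extra factor $h$ threaded throughout, which is why the elliptic-coordinate technique promised in the introduction becomes essential here.

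By Theorem \ref{thm-cloaking1} and the remark that follows it, weak $2$-order HNCD amounts to splitting $p^{(1)}=p^{(1,0)}+p^{(1,1)}$ and arranging for one summand to vanish in $\mathbb{R}^2\setminus\overline{\Omega}$. The natural decomposition is dictated by the data $C$ in (\ref{boundary-term}): $p^{(1,0)}$ carries the purely geometric jumps (the $A,B$ terms and the $g(\partial^2_{\nu}p|_{-}-\partial^2_{\nu}p|_{+})$ contribution), while $p^{(1,1)}$ carries the electro-osmotic coupling $12\zeta_0\bigl(\partial_{\nu_\Omega}\varphi^{(1)}+g\,\partial^2_{\nu_\Omega}\varphi-g'\partial_{T_\Omega}\varphi\bigr)$. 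For each Fourier index $n$, the vanishing of (say) $p^{(1,0)}$ in the exterior becomes one linear relation between $(a_n,\tilde a_n)$ and $(b_n,\tilde b_n)$; since the coefficient multiplying $(b_n,\tilde b_n)$ involves $\sinh(n\xi_e)$ and $\cosh(n\xi_e)$ and hence does not vanish for $\xi_e>\xi_i>0$, one can solve mode by mode. Assembling the resulting coefficients produces the required $g$, and its $\mathcal{C}^1$-regularity on $\partial\Omega$ follows from the assumed $\mathcal{C}^1$-regularity of $f$ together with the fact that the solve-for map is diagonal in $n$ with bounded (polynomially controlled) multiplier.

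The main obstacle I anticipate is precisely the metric factor $h(\xi,\eta)$: multiplying a pure mode $\cos(k\eta)$ by $h^{\pm 1}$ spreads energy into neighbouring harmonics, so the images of $f$ and $g$ under the operators defining $A,B,C$ are no longer confined to the single mode $k$. This mode-mixing is exactly why both $p^{(1,0)}$ and $p^{(1,1)}$ cannot be annihilated simultaneously by a single scalar function $g(\eta)$--there are too few degrees of freedom--and is the structural reason only the \emph{weak} version of $2$-order HNCD is obtainable in the confocal-ellipse geometry, in contrast to the full $2$-order statement of Theorem \ref{thm-near-cloaking-circle}. Once this bookkeeping is carried out and the off-diagonal couplings are organised into a (near-)triangular system, the construction reduces to a sequence of scalar inversions, which I expect to be the most technically demanding step.
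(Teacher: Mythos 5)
Your high-level reading is right in two respects: the elliptic-coordinate reduction is indeed the essential tool, and the ``weak'' version of the result is forced by the mode-mixing introduced by the Lam\'{e} factor $\gamma(\xi_a,\eta)$. But the decomposition you propose is not the one that makes the argument work, and under your decomposition the ``mode-by-mode'' solve you invoke would actually fail.

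You split $p^{(1)}$ by physical origin -- a ``geometric'' piece $p^{(1,0)}$ carrying $A$, $B$, and the second-normal-derivative jump of $p$, and an ``electro-osmotic'' piece $p^{(1,1)}$ carrying the $12\zeta_0$ terms in $C$. The difficulty is that this split does not isolate the source of the mode-mixing: both pieces still contain the full metric weight $\gamma_a^{-2}$ (since all of $E,A,B,C$ arise from $\partial/\partial\nu=\gamma^{-1}\partial/\partial\xi$, $\partial/\partial T=\gamma^{-1}\partial/\partial\eta$), and $\gamma_a^{-2}$ multiplies a pure harmonic $\cos(k\eta)$ into an infinite linear combination of all $\cos((k\pm 2j)\eta)$. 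Consequently, requiring either of your summands to vanish in $\mathbb{R}^2\setminus\overline{\Omega}$ would not produce a one-relation-per-mode system; the Fourier coefficients of $f$ and $g$ would be coupled across all even offsets, and the clean recursion coupling $d_m$ to $d_{m+2n}$ that the theorem's constructive proof relies on would not emerge. Note also that $B=g(\partial_\nu p|_- -\partial_\nu p|_+)=-12\zeta_0\,g\,\partial_\nu\varphi$ is itself ``electro-osmotic,'' so the physical dichotomy you base the split on is not as sharp as you suggest; and the second-normal-derivative jump of $p$ across $\partial\Omega$ is in fact zero for the confocal-ellipse perfect cloak, so it contributes nothing.

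What is missing is the following idea. The troublesome factor is $\gamma_a^{-2}$, while the elliptic layer potentials have closed forms precisely for densities of the form $\beta_m^{c,a}=\gamma_a^{-1}\cos(m\eta)$ and $\beta_m^{s,a}=\gamma_a^{-1}\sin(m\eta)$ (cf.\ \eqref{S-ellipse-cos}--\eqref{K-ellipse}). One should therefore expand the \emph{metric factor}, not the physics: write $\gamma_a^{-1}=\gamma_{a,0}^{-1}+\gamma_{a,1}^{-1}$ with $\gamma_{a,0}^{-1}=c_{a,0}$ the constant Fourier mode, so that $\gamma_a^{-2}=c_{a,0}\,\gamma_a^{-1}+\gamma_{a,1}^{-1}\gamma_a^{-1}$. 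Splitting $E,A,B,C$ (and hence $\varphi^{(1)}$, $p^{(1)}$) according to this decomposition puts all of the leading data in the $\beta_m^{\,\cdot\,,a}$ form, where the layer-potential calculus gives finite-step recursions coupling only $m$ and $m\pm 2n$. Cancelling just this leading piece $p^{(1,0)}$ gives the weak $2$-order HNCD; the subleading piece, driven by $\gamma_{a,1}^{-1}$, is the part that cannot be simultaneously annihilated. This reorganization is the content you would need to add before your argument goes through.
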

\begin{rem}
In Theorem \ref{thm-near-cloaking-circle} and Theorem \ref{thm-near-cloaking-ellipse}, our proofs are constructive arguments. According to the proofs, the shape function $g$ can be constructed by recursive formulas.
\end{rem}

\subsection{Layer potentials formulation}
In this section, we first collect some preliminary knowledge on boundary layer potentials and then recall
the representation formula of the solution to the governing equations.
For a bounded domain $\Gamma=D$ or $\Omega$ in $\R^2$, let us now introduce the single-layer and double-layer potential by
\begin{align*}
\mathcal{S}_\Gamma[\vartheta](x):=\int_{\partial \Gamma}G(x,y)\vartheta(y)\d\sigma(y),\ \ \ &x\in \mathbb{R}^2,\\
\mathcal{D}_\Gamma[\vartheta](x):=\int_{\partial \Gamma}\frac{\partial G(x,y)}{\partial \nu_{\Gamma}(y)}\vartheta(y)\d\sigma(y),\ \ \ &x\in \mathbb{R}^2\setminus \partial \Gamma,
\end{align*}
where $\vartheta\in L^2(\p \Gamma)$ is the density function, and the Green function $G(x,y)$ to the Laplace in  $\mathbb{R}^2$ is given by
\begin{align*}
G(x,y)=\frac{1}{2\pi}\ln|x-y|.
\end{align*}
For a function $p$ defined on $\mathbb{R}^2\setminus\partial \Gamma$, we denote
\[
p|_{\pm}(x):=\lim_{t\rightarrow 0^+}p(x\pm t\nu_{\Gamma}(x)), \ \ x\in \partial\Gamma,
\]
and
\[
\frac{\partial p}{\partial \nu_{\Gamma}}\bigg|_{\pm}(x):=\lim_{t\rightarrow 0^+}\langle \nabla p(x\pm t\nu_{\Gamma}(x)),\nu_{\Gamma}(x) \rangle, \ \ x\in  \partial\Gamma,
\]
if the limits exist. Then the following jump relations hold :
\begin{align*}
\frac{\partial\mathcal{S}_\Gamma[\vartheta]}{\partial \nu_{\Gamma}}\bigg|_{\pm}(x)=\Big(\pm\frac{1}{2}I+\mathcal{K}^*_\Gamma\Big)[\vartheta](x), \ \ &x\in \partial\Gamma,\\
\mathcal{D}_\Gamma[\vartheta]|_{\pm}(x)=\Big(\mp\frac{1}{2}I+\mathcal{K}_\Gamma\Big)[\vartheta](x), \ \ &x\in \partial\Gamma,
\end{align*}
where $\mathcal{K}^*_{\Gamma}$ is the $L^2$-adjoint of $\mathcal{K}_{\Gamma}$ and
\begin{align*}
\mathcal{K}_\Gamma^*[\vartheta](x)=\mathrm{p.v.} \int_{\partial \Gamma}\frac{\partial G(x,y)}{\partial \nu_{\Gamma} (x)}\vartheta(y)\d\sigma(y), \ \ x\in \partial\Gamma,
\end{align*}
where $\mathrm{p.v.}$ stands for the Cauchy principal value.

Next, from Theorem 3.2 in \cite{Liu2023}, we
know that the solution $\varphi_\epsilon$ to (\ref{electro-osmotic equation}) can be represented using the single layer potentials $S_{D_\epsilon}$ as follows:
\begin{equation*}
\varphi_\epsilon (x)=
H(x) + \mathcal{S}_{D_\epsilon}[\phi_\epsilon](x),\quad x\in\mathbb{R}^2\setminus\overline{D_\epsilon},
\end{equation*}
where  density function $ \phi_\epsilon \in L_0^2(\p D_\epsilon)$ satisfies
\begin{align*}
\Big(\frac{1}{2} I+\mathcal{K}^*_{D_\epsilon}\Big)[\phi_\epsilon]=-\frac{\partial H}{\partial \nu_{D_\epsilon}} \quad \mbox{on }\p D_\epsilon,
\end{align*}
and $p_\epsilon$ can be represented using the single-layer potentials $S_{D_\epsilon}$ and $S_{\Omega_\epsilon}$ as follows:
\begin{equation*}
p_\epsilon(x) = P(x) +\mathcal{S}_{D_\epsilon}[\psi_{i,\epsilon}](x) + \mathcal{S}_{\Omega_\epsilon}[\psi_{e,\epsilon}](x),\quad  x\in  \mathbb{R}^2\setminus\overline{D_\epsilon},
\end{equation*}
where the pair $(\psi_{i,\epsilon}, \psi_{e,\epsilon})\in L_0^2(\p D_\epsilon)\times L_0^2(\p \Omega_\epsilon)$ satisfies
\begin{align}
\label{density-equation}
\begin{cases}
\ds \Big(\frac{1}{2} I+\mathcal{K}^*_{D_\epsilon}\Big)[\psi_{i,\epsilon}]
+\frac{\partial\mathcal{S}_{\Omega_\epsilon}[\psi_{e,\epsilon}]}{\partial\nu_{D\epsilon}}= -\frac{\p P}{\p \nu_{D\epsilon}} &\quad  \mbox{on }\p D_\epsilon, \ms\\
\ds \psi_{e,\epsilon} =12 \zeta_0\frac{\partial \varphi_\epsilon}{\partial \nu_{\Omega_\epsilon}} &\quad  \mbox{on }\p \Omega_\epsilon. \
\end{cases}
\end{align}
Furthermore, there exists a constant $C=C(\zeta_0,D_\epsilon,\Omega_\epsilon)$ such that
\begin{align*}
\|\psi_{i,\epsilon}\|_{L^2(\p D_\epsilon)}+\|\psi_{e,\epsilon}\|_{L^2(\p \Omega_\epsilon)}
\leq C \big(\| \nabla P \|_{L^2(\p D_\epsilon)}+\left\|\nabla H \right\|_{L^2(\p \Omega_\epsilon)}\big).
\end{align*}

\section{Enhanced hydrodynamic near-cloaking for general perturbed Geometry}\label{Sec-Sensitivity-analysis}
In this section, we rigorously derive the asymptotic expansions of the perturbed electric and pressure fields  and obtain a first-order coupled system by two different methods.  We first derive  formally the asymptotic expansions by  FE method \cite{Coifman1999} and then prove rigorously these results by using the layer-potential perturbation technique. The representation formulas of the solutions to the first-order coupled system are also obtained by the layer potential.

Let $\Psi_{D_\epsilon}$ and $\Psi_{\Omega_\epsilon}$ be the diffeomorphism from $\partial D$ to $\partial D_\epsilon$ and $\partial \Omega$ to $\partial \Omega_\epsilon$ given by
\begin{align*}
\Psi_{D_\epsilon}(x)=x+\epsilon f(x)\nu_{D}(x), \quad &x\in \partial D,\\
\Psi_{\Omega_\epsilon}(x)=x+\epsilon g(x)\nu_{\Omega}(x), \quad &x\in \partial \Omega.
\end{align*}
Moreover, we denote $\nu_{D_\epsilon}$ the outward unit normal vector field on  $\partial D_\epsilon$ and $\d\sigma_{D\epsilon}$ the line element of $\partial D_\epsilon$, the following expansions of $\nu_{D_\epsilon}$  and $\d\sigma_{D\epsilon} $ hold \cite{M. Lim2012}:
\begin{align}
\label{nu-D-eps}
\nu_{D_\epsilon} (\tilde{x})&=\nu_D(x)-\epsilon f'(x)T_D(x)+O(\epsilon^2),\\
\label{line-D-eps}\d\sigma_{D\epsilon}(\tilde{x})&=\d\sigma_D(x)-\epsilon\tau_D(x)f(x)\d\sigma_D(x)+O(\epsilon^2).
\end{align}
Similarly, we obtain
\begin{align}
\label{nu-Omega-eps}
\nu_{\Omega_\epsilon} (\tilde{x})&=\nu_\Omega(x)-\epsilon g'(x)T_\Omega(x)+O(\epsilon^2),\\
\label{line-Omega-eps}\d\sigma_{\Omega\epsilon}(\tilde{x})&=\d\sigma_\Omega(x)-\epsilon\tau_\Omega(x)g(x)\d\sigma_\Omega(x)+O(\epsilon^2).
\end{align}
Here and throughout this paper, $\tau_{D}(x)$ and $\tau_{\Omega}(x)$ denote the curvature of $\partial D$ and $\partial \Omega$ at $x$, $T_{D}$ and $T_{\Omega}$ are the unit tangential vector on $\partial D$ and $\partial \Omega$, respectively. $f'(x)$ is the tangential derivative of $f$ on $\partial D$, i.e., $f'=\frac{\partial f}{\partial T}$. So is $g'(x)$.

\subsection{Formal derivations: the FE method}\label{sec-FE}
In this subsection, we prove formally Theorem \ref{thm-expansion} based on the FE method.
We first derive formally the asymptotic expansion of $\varphi_\epsilon$, solution to (\ref{electro-osmotic equation}), as $\epsilon$ goes to zero. We start by expanding $\varphi_\epsilon$ in powers of  $\epsilon$, that is
\begin{align}
\label{ep-expansion-FE}
\varphi_\epsilon=\varphi^{(0)}+\epsilon\varphi^{(1)}+O(\epsilon^2),
\end{align}
where $\varphi^{(n)}$, $n = 0, 1$, are well defined in $ \mathbb{R}^2\setminus\overline{D}$, and satisfy
\begin{equation*}
\begin{cases}
\ds \Delta \varphi ^{(n)}= 0  \quad \ &\mbox{in} \  \mathbb{R}^2\setminus\overline{D},\\
\ds  \varphi^{(n)} =  \delta_{0,n} H(x) + O(|x|^{-1})\ \ &as\ |x|\rightarrow +\infty,\\
\end{cases}
\end{equation*}
Here, $\delta_{0,n}$ is the Kronecker symbol.

Let $\tilde{x}=x+\epsilon f(x)\nu_D(x) \in \partial D_\epsilon $ for $ x\in \partial D$. The normal derivative  $\frac{\partial \varphi_\epsilon}{\partial \nu_{D_\epsilon}}(\tilde{x})$ on $\partial D_\epsilon$ is given by
\begin{align}\label{normal-derivative-D-eps}
\frac{\partial  \varphi_\epsilon}{\partial \nu_{D_\epsilon}}(\tilde{x}) = \nabla \varphi_\epsilon (\tilde{x}) \cdot \nu_{D_\epsilon}(\tilde{x}),
\end{align}
where $\nu_{D_\epsilon}(\tilde{x})$ is the outward unit normal to $\partial D_\epsilon$ at $\tilde{x}$ defined by (\ref{nu-D-eps}).
To evaluate  $\nabla \varphi (\tilde{x})$  appearing in (\ref{normal-derivative-D-eps}),  we expand $\nabla \varphi_\epsilon$  around $\partial D$ and use (\ref{ep-expansion-FE}) to obtain
\begin{align}\label{varphi-gradient}
\nabla \varphi_\epsilon (\tilde{x}) = \nabla \varphi ^{(0)}(x) +\epsilon \nabla \varphi^{(1)} (x) + \epsilon f \nabla^2\varphi ^{(0)}\nu_{D}(x)+O(\epsilon^2), \quad x\in \partial D.
\end{align}
It then follows from (\ref{nu-D-eps}), (\ref{normal-derivative-D-eps}) and \eqref{varphi-gradient} that
\begin{align}\label{ep-normal-derivative-D-eps-expansion}
\frac{\partial  \varphi_\epsilon}{\partial \nu_{D_\epsilon}}(\tilde{x})
=&\frac{\partial  \varphi^{(0)}}{\partial \nu_{D}}(x)  + \epsilon \Big(\frac{\partial  \varphi^{(1)}}{\partial \nu_{D}}(x)   + f \frac{\partial^2  \varphi^{(0)}}{\partial \nu^2_{D}}(x)  -f'\frac{\partial  \varphi^{(0)}}{\partial T_{D}}(x) \Big) +O(\epsilon^2),  \quad x\in \partial D.
\end{align}
By using $\frac{\partial  \varphi_\epsilon}{\partial \nu_{D_\epsilon}}(\tilde{x}) =0$ on $\partial D_\epsilon$, we deduce from (\ref{ep-normal-derivative-D-eps-expansion})  that
\begin{align*}
\frac{\partial  \varphi^{(0)}}{\partial \nu_{D}}(x)=0, \quad &x\in \partial D,\\
\frac{\partial  \varphi^{(1)}}{\partial \nu_{D}}(x) = f'\frac{\partial  \varphi^{(0)}}{\partial T_{D}}(x)  - f \frac{\partial^2  \varphi^{(0)}}{\partial \nu^2_{D}}(x),  \quad &x\in \partial D.
\end{align*}
Note that $\varphi^{(0)} =\varphi$, which is the solution $\varphi$ to (\ref{electro-osmotic equation-original}).
In a similar way, we next expand $p_\epsilon$ in powers of  $\epsilon$, that is
\begin{align*}
p_\epsilon=p^{(0)}+\epsilon p^{(1)}+O(\epsilon^2),
\end{align*}
where $p^{(n)}$, $n = 0, 1$, are well defined in $ \mathbb{R}^2\setminus\overline{D}$, and satisfy
\begin{equation*}
\begin{cases}
\ds \Delta p ^{(n)}= 0  \quad \ &\mbox{in} \  \mathbb{R}^2\setminus\overline{D},\\
\ds  p^{(n)} =  \delta_{0,n} P(x) + O(|x|^{-1})\ \ &as\ |x|\rightarrow +\infty,\\
\end{cases}
\end{equation*}
Here, $\delta_{0,n}$ is the Kronecker symbol.

Let $\tilde{x}=x+\epsilon f(x)\nu_D(x) \in \partial D_\epsilon $, for $ x\in \partial D$. The normal derivative  $\frac{\partial p_\epsilon}{\partial \nu_{D_\epsilon}}(\tilde{x})$ on $\partial D_\epsilon$ is given by
\begin{align*}
\frac{\partial  p_\epsilon}{\partial \nu_{D_\epsilon}}(\tilde{x}) = \nabla p_\epsilon (\tilde{x}) \cdot \nu_{D_\epsilon}(\tilde{x}),
\end{align*}
where $\nu_{D_\epsilon}(\tilde{x})$ is the outward unit normal to $\partial D_\epsilon$ at $\tilde{x}$ defined by  (\ref{nu-D-eps}).
Similarly, using \eqref{nu-Omega-eps} we obtain
\begin{align}\label{p-normal-derivative-D-eps-expansion}
\frac{\partial  p_\epsilon}{\partial \nu_{D_\epsilon}}(\tilde{x})
=&\frac{\partial  p^{(0)}}{\partial \nu_{D}}(x)  + \epsilon \Big(\frac{\partial  p^{(1)}}{\partial \nu_{D}}(x)   + f \frac{\partial^2  p^{(0)}}{\partial \nu^2_{D}}(x)  -f'\frac{\partial  p^{(0)}}{\partial T_{D}}(x) \Big) +O(\epsilon^2),  \quad x\in \partial D.
\end{align}
By using $\frac{\partial  p_\epsilon}{\partial \nu_{D_\epsilon}}(\tilde{x}) =0$ on $\partial D_\epsilon$, we deduce from (\ref{p-normal-derivative-D-eps-expansion}) that
\begin{align*}
\frac{\partial  p^{(0)}}{\partial \nu_{D}}(x)=0, \quad &x\in \partial D,\\
\frac{\partial  p^{(1)}}{\partial \nu_{D}}(x) = f'\frac{\partial  p^{(0)}}{\partial T_{D}}(x)  - f \frac{\partial^2  p^{(0)}}{\partial \nu^2_{D}}(x),  \quad &x\in \partial D.
\end{align*}

For $\tilde{x}=x+\epsilon g(x)\nu_\Omega(x) \in \partial \Omega_\epsilon $, we have the following Taylor expansions:
\begin{align*}
p_\epsilon|_{-}(\tilde{x}) =  p ^{(0)}\big|_{-}(x) + \epsilon  p^{(1)}\big|_{-}(x)+ \epsilon g \frac{\partial  p^{(0)}}{\partial \nu_{D}}\Big|_{-}(x) +O(\epsilon^2), \quad &x \in \partial \Omega,\\
p_\epsilon|_{+}(\tilde{x}) =  p ^{(0)}\big|_{+}(x) + \epsilon  p^{(1)}\big|_{+}(x)+ \epsilon g \frac{\partial  p^{(0)}}{\partial \nu_{D}}\Big|_{+}(x) +O(\epsilon^2), \quad &x \in \partial \Omega,
\end{align*}
and
\begin{align*}
\frac{\partial  p_\epsilon}{\partial \nu_{\Omega_\epsilon}}\Big|_{-}(\tilde{x})
=\frac{\partial  p^{(0)}}{\partial \nu_{\Omega}}\Big|_{-}(x) + \epsilon \Big(\frac{\partial  p^{(1)}}{\partial \nu_{\Omega}}\Big|_{-}(x) + g \frac{\partial^2  p^{(0)}}{\partial \nu^2_{\Omega}}\Big|_{-}(x)-g'\frac{\partial  p^{(0)}}{\partial T_{\Omega}}\Big|_{-}(x)\Big)+O(\epsilon^2),\quad &x\in \partial \Omega,\\
\frac{\partial  p_\epsilon}{\partial \nu_{\Omega_\epsilon}}\Big|_{+}(\tilde{x})
=\frac{\partial  p^{(0)}}{\partial \nu_{\Omega}}\Big|_{+}(x) + \epsilon \Big(\frac{\partial  p^{(1)}}{\partial \nu_{\Omega}}\Big|_{+}(x) + g \frac{\partial^2  p^{(0)}}{\partial \nu^2_{\Omega}}\Big|_{+}(x) -g'\frac{\partial  p^{(0)}}{\partial T_{\Omega}}\Big|_{+}(x)\Big)+O(\epsilon^2),\quad &x\in \partial \Omega,\\
\frac{\partial  \varphi_\epsilon}{\partial \nu_{\Omega_\epsilon}}(\tilde{x})
=\frac{\partial  \varphi^{(0)}}{\partial \nu_\Omega}(x)+ \epsilon \Big(\frac{\partial  \varphi^{(1)}}{\partial \nu_{\Omega}}(x) + g \frac{\partial^2  \varphi^{(0)}}{\partial \nu^2_{\Omega}}(x)-g'\frac{\partial  \varphi^{(0)}}{\partial T_{\Omega}}(x)\Big)+O(\epsilon^2),\quad &x\in \partial \Omega.
\end{align*}
The transmission conditions on $\Omega_\epsilon$ immediately yield
\begin{align*}
 p^{(0)}|_{-}=p^{(0)}|_{+} \quad  &\mbox{on }\partial\Omega,\\
 \frac{\partial p^{(0)}}{\partial \nu_{\Omega}} \Big|_{+} - \frac{\partial p^{(0)}}{\partial \nu_{\Omega}} \Big|_{-} = 12 \zeta_0
\frac{\partial \varphi^{(0)}}{\partial \nu_{\Omega}} \quad &\mbox{on }\partial\Omega,
\end{align*}
and
\begin{align*}
 p^{(1)}|_{+} - p^{(1)}|_{-}=g\Big(\frac{\partial p^{(0)}}{\partial\nu_{\Omega}}\Big|_{-}-\frac{\partial p^{(0)}}{\partial\nu_{\Omega}}\Big|_{+}\Big)   \quad & \mbox{on }\partial\Omega,\\
 \frac{\partial p^{(1)}}{\partial \nu_{\Omega}} \Big|_{+} - \frac{\partial p^{(1)}}{\partial \nu_{\Omega}} \Big|_{-} = g\Big(\frac{\partial^2  p^{(0)}}{\partial \nu^2_{\Omega}}\Big|_{-}- \frac{\partial^2  p^{(0)}}{\partial \nu^2_{\Omega}}\Big|_{+}\Big)
  +12\zeta_0 \Big( \frac{\partial  \varphi^{(1)}}{\partial \nu_{\Omega}} +g \frac{\partial^2  \varphi^{(0)}}{\partial \nu^2_{\Omega}}-g'\frac{\partial  \varphi^{(0)}}{\partial T_{\Omega}}\Big) \quad & \mbox{on }\partial\Omega,
\end{align*}
where we used $\frac{\partial  p^{(0)}}{\partial T_{\Omega}}\Big|_{+}=\frac{\partial  p^{(0)}}{\partial T_{\Omega}}\Big|_{-}$ on $\p\Omega$.
Note that $p^{(0)} =p$, which is the solution $p$ to (\ref{electro-osmotic equation-original}). Thus we formally obtain Theorem \ref{thm-expansion}, as desired. For a rigorous proof, see Section \ref{Lptm}.
\subsection{Layer potential and asymptotic analysis}\label{Lptm}
In this subsection, we prove rigorously Theorem \ref{thm-expansion} based on the layer potential techniques and establish
the representation formula of the solution to the first-order coupled system \eqref{first-order-equation}.
\subsubsection{Asymptotic expansion of electric potential}
For $\tilde{x}=x+\epsilon f(x)\nu_D(x) \in \partial D_\epsilon $, we have the following Taylor expansion
\begin{align*}
\frac{\partial H}{\partial \nu_{D_\epsilon}}(\tilde{x})
=\frac{\partial H}{\partial \nu_{D}}(x)+\epsilon \Big( f(x)\frac{\partial^2 H(x)}{\partial\nu_D^2(x)}-f'(x)\frac{\partial  H}{\partial T_D}(x)\Big)+O(\epsilon^2),\ x \in\partial D,
\end{align*}
where the remainder $O(\epsilon^2)$ depends only on the $\mathcal{C}^2$-norm of $\partial D$ and $\parallel f\parallel_{\mathcal{C}^1}$.

Let the solution $\varphi$ to (\ref{electro-osmotic equation-original}) be represented as
\begin{equation}\label{sol-varphi}
\varphi(x)=
H(x) + \mathcal{S}_{D}[\phi](x),\quad x\in\mathbb{R}^2\setminus\overline{D},
\end{equation}
where  density function $ \phi \in L_0^2(\p D)$ satisfies
\begin{align*}
\Big(\frac{1}{2} I+\mathcal{K}^*_{D}\Big)[\phi]=-\frac{\partial H}{\partial \nu_{D}} \quad \mbox{on }\p D.
\end{align*}

The following lemmas can be find in \cite{M. Lim2012}.
\begin{lem}\label{lem-KD-epsilon-expansion}
 For $\tilde{\phi}\in L^2(\partial D_\epsilon)$, let $\phi:=\tilde{\phi}\circ\Psi_{D_\epsilon}$. Then there exists a constant $C$ depending only on the $\mathcal{C}^2$-norm of $\partial D$ and $\parallel f\parallel_{\mathcal{C}^1}$ such that
\[
\parallel\big{(}\mathcal{K}_{ D_\epsilon}^{*}[\tilde{\phi}]\big{)}\circ\Psi_{D_\epsilon}-\mathcal{K}_{ D}^{*}[{\phi}]-\epsilon \mathcal{K}_{ D}^{(1)}[\phi]\parallel_{L^2(\partial D)}\leq C\epsilon^2\parallel \phi\parallel_{L^2(\partial D)},
\]
with the operator $\mathcal{K}_D^{(1)}$ defined for any $\phi\in \mathcal{C}(\partial D)$ by
\begin{align*}
\mathcal{K}_{D}^{(1)}[\phi](x)=\mathrm{p.v.}\int_{\partial D}K_1(x,y)\phi(y)d\sigma(y), \ \ \ x\in \partial D,
\end{align*}
where
\begin{align*}
K_1(x,y)=&-2\frac{\langle x-y,\nu(x)\rangle\langle x-y, f(x)\nu(x)-f(y)\nu(y)\rangle}{|x-y|^4}\\
&+\frac{\langle f(x)\nu(x)-f(y)\nu(y),\nu(x) \rangle}{|x-y|^2}-\frac{\langle x-y,\tau(x)f(x)\nu(x)+f'(x)T(x) \rangle}{|x-y|^2}\\
&+\frac{\langle x-y,\nu(x) \rangle}{|x-y|^2}(f(x)\tau(x)-f(y)\nu(y)).
\end{align*}
Here, $\mathrm{p.v.}$ denotes the Cauchy principal value.
\end{lem}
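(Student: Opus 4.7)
The plan is to pull the integral back from $\partial D_\epsilon$ to $\partial D$ by the diffeomorphism $\Psi_{D_\epsilon}$ and then Taylor-expand the resulting kernel in $\epsilon$ to order two. Writing $\tilde{x}=\Psi_{D_\epsilon}(x)$, $\tilde{y}=\Psi_{D_\epsilon}(y)$, and $\phi=\tilde\phi\circ\Psi_{D_\epsilon}$, the change of variables yields
$$\bigl(\mathcal{K}^*_{D_\epsilon}[\tilde\phi]\bigr)\circ\Psi_{D_\epsilon}(x)=\frac{1}{2\pi}\,\mathrm{p.v.}\int_{\partial D}\frac{\langle\tilde{x}-\tilde{y},\,\nu_{D_\epsilon}(\tilde{x})\rangle}{|\tilde{x}-\tilde{y}|^2}\,\phi(y)\,\d\sigma_{D_\epsilon}(\tilde{y}).$$
I would then expand each ingredient separately: from the definitions one has $\tilde{x}-\tilde{y}=(x-y)+\epsilon(f(x)\nu(x)-f(y)\nu(y))$ and the geometric series $|\tilde{x}-\tilde{y}|^{-2}=|x-y|^{-2}\bigl(1-2\epsilon|x-y|^{-2}\langle x-y,f(x)\nu(x)-f(y)\nu(y)\rangle+O(\epsilon^2)\bigr)$, while $\nu_{D_\epsilon}\circ\Psi_{D_\epsilon}$ and $\d\sigma_{D_\epsilon}$ have the explicit forms \eqref{nu-D-eps} and \eqref{line-D-eps}.

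Next I would assemble terms order by order. The $O(1)$ part is exactly the kernel of $\mathcal{K}_D^*$, so it reproduces $\mathcal{K}_D^*[\phi](x)$. The $O(\epsilon)$ part is the sum of three independent contributions: (i) the linear correction to the numerator, which gives $\bigl(\langle f(x)\nu(x)-f(y)\nu(y),\nu(x)\rangle-f'(x)\langle x-y,T(x)\rangle\bigr)/|x-y|^2$; (ii) the linear correction to $|\tilde{x}-\tilde{y}|^{-2}$ multiplied by the leading numerator, which gives $-2\langle x-y,\nu(x)\rangle\langle x-y,f(x)\nu(x)-f(y)\nu(y)\rangle/|x-y|^4$; and (iii) the Jacobian correction from \eqref{line-D-eps}, contributing $-\tau(y)f(y)\langle x-y,\nu(x)\rangle/|x-y|^2$. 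Decomposing $-\tau(y)f(y)=-\tau(x)f(x)+(\tau(x)f(x)-\tau(y)f(y))$ regroups the curvature pieces into the block $-\langle x-y,\tau(x)f(x)\nu(x)+f'(x)T(x)\rangle/|x-y|^2$ plus an explicit cross-curvature term, yielding precisely the stated kernel $K_1(x,y)$.

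The main obstacle is the $L^2$ remainder estimate, since the bare factor $1/|x-y|^2$ is Cauchy-singular. Two geometric cancellations save the argument. Because $\partial D\in\mathcal{C}^2$, one has $\langle x-y,\nu(x)\rangle=O(|x-y|^2)$, and because $f\in\mathcal{C}^1(\partial D)$, one has $f(x)\nu(x)-f(y)\nu(y)=O(|x-y|)$, with constants controlled by $\|\partial D\|_{\mathcal{C}^2}$ and $\|f\|_{\mathcal{C}^1}$. Every summand arising in the $O(1)$, $O(\epsilon)$, and $O(\epsilon^2)$ kernels is therefore either bounded pointwise or of Calder\'on--Zygmund type after these cancellations, hence defines a bounded operator on $L^2(\partial D)$ with norm controlled by the stated geometric quantities. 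Writing each Taylor residue in integral form and estimating the resulting second-order kernels by these pointwise bounds then produces the claimed $O(\epsilon^2)$ operator-norm estimate. The argument is essentially a careful calculus computation, with most of the effort spent on bookkeeping signs and correctly identifying the three sources of the first-order correction.
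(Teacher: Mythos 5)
Your proposal is correct and follows the standard argument for this kind of perturbation expansion. Note that the paper itself does not give a proof of this lemma; it is quoted verbatim from Ammari, Kang, Lim and Zribi (the reference \cite{M. Lim2012}), and your outline is essentially the proof from that reference: pull back the kernel of $\mathcal{K}^*_{D_\epsilon}$ through $\Psi_{D_\epsilon}$, expand the vector $\tilde x - \tilde y$, the reciprocal $|\tilde x - \tilde y|^{-2}$, the normal field $\nu_{D_\epsilon}\circ\Psi_{D_\epsilon}$ and the Jacobian $\d\sigma_{D_\epsilon}$ to order $\epsilon$, collect the three first-order contributions, and regroup the Jacobian piece $-\tau(y)f(y)$ as $-\tau(x)f(x)+\bigl(\tau(x)f(x)-\tau(y)f(y)\bigr)$ to obtain the displayed form of $K_1$. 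Your identification of the geometric cancellations $\langle x-y,\nu(x)\rangle=O(|x-y|^2)$ and $f(x)\nu(x)-f(y)\nu(y)=O(|x-y|)$ as the mechanism that keeps every piece Calder\'on--Zygmund is exactly the right point for the $L^2$ operator bound, and one would finish by writing the Taylor residues in integral form and invoking $L^2$ boundedness of weakly singular and Calder\'on--Zygmund kernels.

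One small remark: your regrouping produces the cross-curvature factor $\bigl(f(x)\tau(x)-f(y)\tau(y)\bigr)$ in the last summand of $K_1$, which is the correct expression; the paper's displayed $\bigl(f(x)\tau(x)-f(y)\nu(y)\bigr)$ is a typo (a scalar cannot be subtracted from a vector), so your version is the one you should trust.
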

In fact, we can rewrite the operator $\mathcal{K}_{D}^{(1)}$ (see \cite{M. Lim2012}) in terms of more familiar operators as follows,
\begin{align}
\small
\label{KD1}
\mathcal{K}_{D}^{(1)}[\phi]=-\frac{\partial}{\partial T_D}\bigg{(} f\frac{\partial \mathcal{S}_D[\phi]}{\partial T_D}\bigg{)}+\frac{\partial \mathcal{D}_D[f\phi]}{\partial \nu_D}+\tau_Df\frac{\partial \mathcal{S}_{D}[\phi]}{\partial \nu_D}\Big|_{+}- \frac{\partial \mathcal{S}_D[\tau_Df \phi]}{\partial \nu_D}\Big|_{+}.
\end{align}
\begin{lem}\label{density asy}
 Let $\phi_\epsilon=-(\frac{1}{2} I+\mathcal{K}_{D_\epsilon}^*)^{-1}[\nabla H\cdot\nu_{D_\epsilon}]$ and $\phi=-(\frac{1}{2} I+\mathcal{K}^*_{D})^{-1}[\nabla H\cdot\nu_D]$. Then we have
\begin{align}\label{ep-density-expansion}
\parallel \phi_\epsilon\circ\Psi_{D_\epsilon}-\phi-\epsilon\phi^{(1)}\parallel_{L^2(\partial D)}\leq C\epsilon^2,
\end{align}
where $C$ is a constant depending only on the $\mathcal{C}^2$-norm of $\partial D$ and $\parallel f\parallel_{\mathcal{C}^1}$ and
\begin{align}
\label{asy density1}
\phi^{(1)}=-\Big(\frac{1}{2} I+\mathcal{K}_D^*\Big)^{-1}\bigg{(}f\frac{\partial^2 H}{\partial\nu_D^2}-f'\frac{\partial H}{\partial T_D}+\mathcal{K}_D^{(1)} \phi\bigg{)}.
\end{align}
\end{lem}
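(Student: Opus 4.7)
The plan is to pull back the defining equation for $\phi_\epsilon$ from $\partial D_\epsilon$ to $\partial D$ via the diffeomorphism $\Psi_{D_\epsilon}$, and then match powers of $\epsilon$ using the expansion of $\mathcal{K}^*_{D_\epsilon}$ provided by Lemma \ref{lem-KD-epsilon-expansion} on one side and the Taylor expansion of $\nabla H \cdot \nu_{D_\epsilon}$ on the other. Concretely, starting from
\begin{equation*}
\bigl(\tfrac{1}{2}I + \mathcal{K}^*_{D_\epsilon}\bigr)[\phi_\epsilon] = -\nabla H \cdot \nu_{D_\epsilon} \quad \text{on } \partial D_\epsilon,
\end{equation*}
compose both sides with $\Psi_{D_\epsilon}$. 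Lemma \ref{lem-KD-epsilon-expansion} converts the left-hand operator into $\tfrac{1}{2}I + \mathcal{K}^*_D + \epsilon\mathcal{K}_D^{(1)}$ acting on $\phi_\epsilon \circ \Psi_{D_\epsilon}$, up to an $L^2$-error of order $\epsilon^2\|\phi_\epsilon \circ \Psi_{D_\epsilon}\|_{L^2(\partial D)}$.

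Next, for the right-hand side I would use the standard Taylor expansion near $\partial D$ together with the expansion \eqref{nu-D-eps} of $\nu_{D_\epsilon}$, yielding
\begin{equation*}
-\bigl(\nabla H \cdot \nu_{D_\epsilon}\bigr)\circ \Psi_{D_\epsilon} = -\frac{\partial H}{\partial \nu_D} - \epsilon\Bigl(f\frac{\partial^2 H}{\partial \nu_D^2} - f'\frac{\partial H}{\partial T_D}\Bigr) + O(\epsilon^2)
\end{equation*}
in $L^2(\partial D)$, with the $O(\epsilon^2)$ depending only on the $\mathcal{C}^2$-norm of $\partial D$ and $\|f\|_{\mathcal{C}^1}$. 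Writing $\phi_\epsilon \circ \Psi_{D_\epsilon} = \phi + \epsilon \phi^{(1)} + \epsilon R_\epsilon$ and collecting terms, the $\epsilon^0$-order equation recovers the definition of $\phi$, while the $\epsilon^1$-order equation reads
\begin{equation*}
\bigl(\tfrac{1}{2}I + \mathcal{K}^*_D\bigr)[\phi^{(1)}] = -\Bigl(f\frac{\partial^2 H}{\partial \nu_D^2} - f'\frac{\partial H}{\partial T_D} + \mathcal{K}_D^{(1)}[\phi]\Bigr),
\end{equation*}
which, by inverting $\tfrac{1}{2}I + \mathcal{K}^*_D$ on $L^2_0(\partial D)$, yields exactly formula \eqref{asy density1}.

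The closing step is to bound the remainder $R_\epsilon$. Subtracting the equations satisfied by $\phi_\epsilon \circ \Psi_{D_\epsilon}$ and $\phi + \epsilon \phi^{(1)}$, applying Lemma \ref{lem-KD-epsilon-expansion} once more, and using the invertibility of $\tfrac{1}{2}I + \mathcal{K}^*_D$ on the mean-zero subspace (one should check that both sides lie in $L^2_0(\partial D)$ by integrating against the constant density), I would obtain $\|R_\epsilon\|_{L^2(\partial D)} = O(\epsilon)$ times a bound on $\|\phi_\epsilon \circ \Psi_{D_\epsilon}\|_{L^2(\partial D)}$, hence the claimed $O(\epsilon^2)$ estimate.

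The main obstacle will be securing a uniform-in-$\epsilon$ bound on $\|\phi_\epsilon\circ \Psi_{D_\epsilon}\|_{L^2(\partial D)}$, which is needed both inside the Lemma \ref{lem-KD-epsilon-expansion} error term and to close the remainder estimate. Equivalently, one must show that $\tfrac{1}{2}I + \mathcal{K}^*_{D_\epsilon}$ is invertible on $L^2_0(\partial D_\epsilon)$ with operator-norm bound independent of $\epsilon$; this follows from the Neumann-series perturbation of $\tfrac{1}{2}I + \mathcal{K}^*_D$ by $\epsilon \mathcal{K}_D^{(1)}$ (whose boundedness on $L^2(\partial D)$ is itself a consequence of the representation \eqref{KD1} and standard $L^2$-bounds for $\mathcal{S}_D$, $\mathcal{D}_D$), combined with a change-of-variables that shows the pull-back is a uniform Banach-space isomorphism between $L^2(\partial D_\epsilon)$ and $L^2(\partial D)$. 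Once this uniform control is in hand, the matching argument above closes in a routine way.
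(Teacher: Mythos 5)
The paper does not actually prove this lemma; it simply states ``The following lemmas can be find in \cite{M. Lim2012}'' and cites the reference. Your argument reproduces the standard proof from that reference essentially correctly: pull back the boundary integral equation $(\tfrac12 I+\mathcal{K}^*_{D_\epsilon})[\phi_\epsilon]=-\partial_{\nu_{D_\epsilon}}H$ to $\partial D$ via $\Psi_{D_\epsilon}$, expand the operator with Lemma \ref{lem-KD-epsilon-expansion}, Taylor-expand $-\partial_{\nu_{D_\epsilon}}H\circ\Psi_{D_\epsilon}$, match at orders $\epsilon^0$ and $\epsilon^1$ to recover the equations defining $\phi$ and $\phi^{(1)}$, and close the remainder by inverting $\tfrac12 I+\mathcal{K}^*_D+\epsilon\mathcal{K}^{(1)}_D$ uniformly in $\epsilon$. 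Your identification of the main technical point --- a uniform-in-$\epsilon$ bound on $\|\phi_\epsilon\circ\Psi_{D_\epsilon}\|_{L^2(\partial D)}$, obtainable from Neumann-series stability of the perturbed operator --- is the right one, and it is indeed what makes the remainder estimate close. One small point worth tightening in a final write-up: since the change of variables introduces the Jacobian factor $1-\epsilon\tau_D f+O(\epsilon^2)$, the pull-back $\phi_\epsilon\circ\Psi_{D_\epsilon}$ is only mean-zero up to $O(\epsilon)$, so if you insist on inverting on $L^2_0(\partial D)$ you should decompose the function into its mean-zero part plus a controllable constant, or simply verify that the resulting right-hand side in the subtraction argument has the requisite compatibility. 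This is routine but should be stated explicitly. Otherwise the proposal is sound.
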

After the change of variables $\tilde{y}=\Psi_{D_\epsilon}(y)$, we obtain from \eqref{line-D-eps},  (\ref{ep-density-expansion}), and the Taylor
expansion of $G(x-\tilde{y})$ for $y \in \p D$, and $x\in \mathbb{R}^2\setminus\overline{D}$ fixed that
\begin{align*}
&\mathcal{S}_{D_\epsilon}[\phi_\epsilon](x)=\int_{\partial D_\epsilon}G(x,\tilde{y})\phi_\epsilon(\tilde{y})\d\sigma(\tilde{y})\\
 =&\int_{\partial D}\Big(G(x,y)+\epsilon f(y)\frac{\partial G(x,y)}{\partial\nu_D(y)}\Big)\Big(\phi(y)+\epsilon \phi^{(1)}(y)\Big)
 \Big(1-\epsilon\tau_D(y)f(y)\Big) \d\sigma(y) + O(\epsilon^2)\\
 =&\mathcal{S}_{D}[\phi](x)+\epsilon\Big(\mathcal{S}_{D}[\phi^{(1)}](x)-\mathcal{S}_{D}[\tau_{D}f\phi ](x)+\mathcal{D}_{D}[f\phi ](x)\Big)
+O(\epsilon^2).
\end{align*}
Hence from \eqref{sol-varphi} the following pointwise expansion holds for $x\in \mathbb{R}^2\setminus\overline{D}$:
\begin{align}\label{varphi-epsilon}
\varphi_\epsilon(x) =\varphi(x) +\epsilon\Big(\mathcal{S}_{D}[\phi^{(1)}](x)-\mathcal{S}_{D}[\tau_{D}f\phi ](x))+\mathcal{D}_{D}[f\phi ](x)\Big)+O(\epsilon^2).
\end{align}

We now prove the following representation theorem for the solution $\varphi^{(1)}$ to the first-order coupled system (\ref{first-order-equation}), which will be very helpful in
the proof of Theorem \ref{thm-expansion}.
\begin{thm}
  The solution $\varphi^{(1)}$ to (\ref{first-order-equation}) is represented by
  \begin{align}\label{varphi1}
   \varphi^{(1)}=\mathcal{S}_{D}[\phi^{(1)}](x)-\mathcal{S}_{D}[\tau_{D}f\phi](x)+\mathcal{D}_{D}[f\phi](x), \quad x\in \mathbb{R}^2\setminus\overline{D},
  \end{align}
 where $\phi$ and $\phi^{(1)}$ are defined in Lemma \ref{density asy}.
\end{thm}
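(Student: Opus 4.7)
The plan is to verify directly that the function $u:=\mathcal{S}_{D}[\phi^{(1)}]-\mathcal{S}_{D}[\tau_{D}f\phi]+\mathcal{D}_{D}[f\phi]$ on the right-hand side of \eqref{varphi1} solves the exterior Neumann problem for $\varphi^{(1)}$ stated in \eqref{first-order-equation}, and then to invoke uniqueness. Harmonicity of $u$ in $\mathbb{R}^2\setminus\overline{D}$ is immediate since $u$ is a linear combination of single- and double-layer potentials with densities supported on $\partial D$. The decay $u(x)=O(|x|^{-1})$ at infinity is automatic for $\mathcal{D}_{D}[f\phi]$, and for the combined single-layer contribution it reduces to the mean-zero condition $\int_{\partial D}(\phi^{(1)}-\tau_{D}f\phi)\,d\sigma=0$, which I would verify by pairing \eqref{asy density1} with the constant density $1$ and using $\phi\in L^{2}_{0}(\partial D)$ together with the adjoint properties of $\mathcal{K}_{D}^{*}$ and the explicit form \eqref{KD1} of $\mathcal{K}_{D}^{(1)}$.

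The main step is the Neumann boundary condition $\partial u/\partial\nu_{D}|_{+}=E$ on $\partial D$. Taking the exterior trace of the normal derivative and applying the jump relations,
\begin{align*}
\frac{\partial u}{\partial\nu_{D}}\bigg|_{+} = \Bigl(\tfrac{1}{2}I+\mathcal{K}_{D}^{*}\Bigr)[\phi^{(1)}] - \Bigl(\tfrac{1}{2}I+\mathcal{K}_{D}^{*}\Bigr)[\tau_{D}f\phi] + \frac{\partial\mathcal{D}_{D}[f\phi]}{\partial\nu_{D}}\bigg|_{+}.
\end{align*}
Using \eqref{asy density1} I would rewrite the first bracket as $-\bigl(f\,\partial^{2}H/\partial\nu_{D}^{2}-f'\,\partial H/\partial T_{D}+\mathcal{K}_{D}^{(1)}[\phi]\bigr)$, and then expand $\mathcal{K}_{D}^{(1)}[\phi]$ via \eqref{KD1}. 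The two layer-potential pieces $\partial\mathcal{D}_{D}[f\phi]/\partial\nu_{D}$ and $-\partial\mathcal{S}_{D}[\tau_{D}f\phi]/\partial\nu_{D}|_{+}$ appearing in \eqref{KD1} cancel exactly against the $\partial\mathcal{D}_{D}[f\phi]/\partial\nu_{D}|_{+}$ term displayed above and against the $-\partial\mathcal{S}_{D}[\tau_{D}f\phi]/\partial\nu_{D}|_{+}$ portion of $-(\tfrac{1}{2}I+\mathcal{K}_{D}^{*})[\tau_{D}f\phi]$. What remains is a purely local expression in $H$, $\mathcal{S}_{D}[\phi]$, and their tangential and normal boundary traces.

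To identify this local expression with $E$, I would substitute $\varphi=H+\mathcal{S}_{D}[\phi]$ and use the Neumann condition $\partial\varphi/\partial\nu_{D}=0$ on $\partial D$, which gives $\partial\mathcal{S}_{D}[\phi]/\partial\nu_{D}|_{+}=-\partial H/\partial\nu_{D}$. The tangential-derivative term from \eqref{KD1}, namely $-\partial_{T_{D}}(f\,\partial\mathcal{S}_{D}[\phi]/\partial T_{D})$, expands via the product rule and combines with $f'\partial H/\partial T_{D}$ to produce $f'\partial\varphi/\partial T_{D}$ plus an $f\,\partial^{2}\mathcal{S}_{D}[\phi]/\partial T_{D}^{2}$ remainder. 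The latter, together with the $-\tau_{D}f\,\partial\mathcal{S}_{D}[\phi]/\partial\nu_{D}|_{+}$ contribution and $-f\,\partial^{2}H/\partial\nu_{D}^{2}$, collapses to $-f\,\partial^{2}\varphi/\partial\nu_{D}^{2}$ upon invoking the intrinsic boundary decomposition of the Laplacian applied to the harmonic function $\varphi$. The net result is $f'\partial\varphi/\partial T_{D}-f\,\partial^{2}\varphi/\partial\nu_{D}^{2}=E$.

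The hardest part will be precisely this cancellation: four distinct layer-potential contributions must conspire to produce the purely local boundary datum $E$, and the bookkeeping depends delicately on \eqref{KD1}, the Neumann condition for $\varphi$, and the harmonicity of $\varphi$. Once $\partial u/\partial\nu_{D}|_{+}=E$ is confirmed, $u$ satisfies the same exterior Neumann problem as $\varphi^{(1)}$, and uniqueness of solutions to this problem with prescribed $O(|x|^{-1})$ decay at infinity (obtainable by a standard layer-potential argument) yields $u=\varphi^{(1)}$ in $\mathbb{R}^{2}\setminus\overline{D}$, proving \eqref{varphi1}.
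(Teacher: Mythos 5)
Your proposal is correct and follows essentially the same route as the paper's proof: verify harmonicity, decay, and the Neumann boundary condition by feeding \eqref{asy density1} into the exterior jump relation for the single layer, expanding $\mathcal{K}_D^{(1)}[\phi]$ via \eqref{KD1}, cancelling the layer-potential pieces, and invoking the intrinsic boundary decomposition of the Laplacian for the harmonic functions $H$ and $\mathcal{S}_D[\phi]$ to collapse the local remainder to $f'\partial_T\varphi - f\partial^2_\nu\varphi = E$. The only difference is cosmetic: you spell out the uniqueness step that the paper leaves implicit, and you describe the mean-zero check $\int_{\partial D}(\phi^{(1)}-\tau_D f\phi)\,d\sigma=0$ via a pairing argument rather than the paper's terse appeal to $\phi,\phi^{(1)}\in L^2_0(\partial D)$.
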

\begin{proof}
One can easily see that
  \[
  \Delta \varphi^{(1)}=0 \quad \mbox{in } \mathbb{R}^2\setminus\overline{D}.
  \]
Using (\ref{KD1}) and (\ref{asy density1}), we obtain
\begin{align*}
  \frac{ \partial \varphi^{(1)}}{\partial \nu_D}\Big|_{+}=&\frac{ \partial \mathcal{S}_{D}[\phi^{(1)}]}{\partial \nu_D}|_{+}- \frac{\partial \mathcal{S}_{D}[\tau_{D}f\phi]}{\partial \nu_D}\Big|_{+}+\frac{\partial \mathcal{D}_{D}[f\phi]}{\partial \nu_D}\\
  =&(\frac{1}{2} I+\mathcal{K}_D^*)[\phi^{(1)}] - \frac{\partial \mathcal{S}_{D}[\tau_{D}f\phi]}{\partial \nu_D}\Big|_{+}+\frac{\partial \mathcal{D}_{D}[f\phi]}{\partial \nu_D}\\
  =&\bigg{(}  f'\frac{\partial H}{\partial T_D}-f\frac{\partial^2 H}{\partial\nu_D^2}-\mathcal{K}_D^{(1)} \phi\bigg{)}- \frac{\partial \mathcal{S}_{D}[\tau_{D}f\phi]}{\partial \nu_D}\Big|_{+}+\frac{\partial \mathcal{D}_{D}[f\phi]}{\partial \nu_D}\\
  =& f'\frac{\partial H}{\partial T_D}-f\frac{\partial^2 H}{\partial\nu_D^2}+ \frac{\partial}{\partial T_D}\bigg{(} f\frac{\partial \mathcal{S}_D[\phi]}{\partial T_D}\bigg{)}-\tau_Df\frac{\partial \mathcal{S}_{D}[\phi]}{\partial \nu_D}\Big|_{+}\\
  =& f'\frac{\partial H}{\partial T_D}-f\frac{\partial^2 H}{\partial\nu_D^2} +f' \frac{\partial \mathcal{S}_D[\phi]}{\partial T_D} + f  \frac{\partial^2 \mathcal{S}_D[\phi]}{\partial  T^2_D}\\
=&f'\frac{\partial H}{\partial T_D} +f' \frac{\partial \mathcal{S}_D[\phi]}{\partial T_D} -f\frac{\partial^2 H}{\partial\nu_D^2} - f \frac{\partial^2 \mathcal{S}_D[\phi]}{\partial  \nu^2_D}.\\
=&f'\frac{\partial  \varphi}{\partial T_{D}}-  f \frac{\partial^2  \varphi}{\partial \nu^2_{D}}.
\end{align*}
Now, let us check the condition $ \mathcal{S}_{D}[\phi^{(1)}-\tau_{D}f\phi](x) \rightarrow 0 \ \mbox{as } |x|\rightarrow \infty$.
Since $\phi$ and $\phi^{(1)}\in L^2_0(\partial D)$, we have $\int_{\partial D}(\phi^{(1)}-\tau_{D}f\phi  )\d\sigma =0$.
Therefore,
\begin{align*}
 \mathcal{S}_{D}[\phi^{(1)}-\tau_{D}f\phi](x)= G(x)\int_{\partial D}(\phi^{(1)}-\tau_{D}f\phi  )\d\sigma  +O(|x|^{-1})=O(|x|^{-1}) \ \mbox{as } |x|\rightarrow \infty.
\end{align*}
Thus $\varphi^{(1)}$ defined by (\ref{varphi1}) satisfies $\varphi^{(1)}=O(|x|^{-1})$ as $|x|\rightarrow \infty$.

The proof is complete.
\end{proof}

\subsubsection{Asymptotic expansion of pressure}
For $\tilde{x}=x+\epsilon f(x)\nu_D(x) \in \partial D_\epsilon $, we have the following Taylor expansion
\begin{align}
\label{expansion-P-normal-d}
\frac{\partial P}{\partial \nu_{D_\epsilon}}(\tilde{x})
=&\frac{\partial P}{\partial \nu_{D}}(x)+\epsilon \Big(f(x)\frac{\partial^2 P(x)}{\partial\nu_D^2(x)}-f'(x)\frac{\partial  P}{\partial T_D}(x)\Big)+O(\epsilon^2),\ \ \ x \in\partial D,
\end{align}
where the remainder $O(\epsilon^2)$ depends only on the $\mathcal{C}^2$-norm of $\partial D$ and $\parallel f\parallel_{\mathcal{C}^1}$.

Let the solution $p$ to (\ref{electro-osmotic equation-original}) be represented as
\begin{equation}\label{sol-p}
p=P(x)+
\mathcal{S}_{D}[\psi_{i}](x) + \mathcal{S}_{\Omega_\epsilon}[\psi_{e}](x),\quad  x\in \mathbb{R}^2\setminus\overline{D},
\end{equation}
where the pair $(\psi_{i}, \psi_{e})\in L_0^2(\partial D)\times L_0^2(\partial \Omega)$ satisfies
\begin{align}
\label{boundary-integral-equation}
\begin{cases}
\ds \Big(\frac{1}{2}I + \mathcal{K}_{D}^*\Big)[\psi_{i}]
+\frac{\partial\mathcal{S}_{\Omega}[\psi_{e}]}{\partial\nu_{D}}= - \frac{\partial P}{\partial\nu_{D}}\quad  &\mbox{on }\partial D,\\
\ds \psi_{e}= 12 \zeta_0
\frac{\partial \varphi}{\partial \nu_{\Omega}} \quad  &\mbox{on } \partial \Omega.
\end{cases}
\end{align}

Now, we first introduce an integral operator $\mathcal{A}_{\Omega}$, defined for any $ \psi_e \in L^2_0(\p \Omega)$, by
\begin{align*}
\mathcal{A}_{\Omega}[\psi_e](x)=-\frac{\partial \mathcal{S}_{\Omega}[\tau_{\Omega}g\psi_e]}{\partial \nu_{D}}(x)+ f(x)\frac{\partial^2 \mathcal{S}_{\Omega}[\psi_e] }{\partial \nu^2_{D}}(x)-f'(x) \frac{\partial\mathcal{S}_{\Omega}[\psi_e]}{\partial T_{D}}(x) + \frac{\partial\mathcal{D}_{\Omega}[g\psi_e]}{\partial \nu_{D}}(x), \quad x\in\p D.
\end{align*}
Next, introducing the pair $\big(\psi_i^{(1)},\psi_e^{(1)}\big)$ as a solution to the following system:
\begin{align}
\label{first-order-density-equation}
\begin{cases}
\ds \Big(\frac{1}{2}I+\mathcal{K}^*_{D}\Big)[\psi_i^{(1)}]
+\frac{\partial\mathcal{S}_{\Omega}[\psi_e^{(1)}]}{\partial\nu_{D}}=-\mathcal{K}^{(1)}_{D}[\psi_i] -\mathcal{A}_{\Omega}[\psi_e]-f\frac{\partial^2 P}{\partial \nu^2_D}+f'\frac{\partial P}{\partial T_D}\ \ &\mbox{on } \partial D,\\
\ds \psi_e^{(1)}=12\zeta_0\Big(\frac{\partial \varphi^{(1)}}{\partial \nu_\Omega}+ g\frac{\partial^2 \varphi}{\partial \nu^2_\Omega}-g'\frac{\partial \varphi}{\partial T_\Omega}\Big)\ \ &\mbox{on } \partial \Omega,
\end{cases}
\end{align}
where the pair $(\psi_i,\psi_e)$ is the solution to (\ref{boundary-integral-equation}).

It follows from \eqref{density-equation}, (\ref{first-order-density-equation}) and Lemma \ref{lem-KD-epsilon-expansion} that
\begin{align}
\label{diff}
\begin{cases}
\ds \Big(\frac{1}{2}I+\mathcal{K}^*_{D}+\epsilon \mathcal{K}^{(1)}_{D}\Big)\big[\tilde{\psi}_{i}-\psi_i- \epsilon\psi_i^{(1)}\big]
+\Big(\frac{\partial\mathcal{S}_{\Omega}}{\partial\nu_{D}}+\epsilon\mathcal{A}_{\Omega}\Big)\big[\tilde{\psi}_{e}-\psi_e-\epsilon\psi_e^{(1)}\big]\\
\qquad\qquad \qquad\qquad \qquad\qquad\quad\ \ = -\frac{\partial P}{\partial \nu_D}\circ \Phi_{\epsilon} + \frac{\partial P}{\partial \nu_D} + \epsilon\Big(f\frac{\partial^2 P}{\partial \nu^2_D}-f'\frac{\partial P}{\partial T_D}\Big) + O(\epsilon^2)\ \ &\mbox{on } \partial D,\\
\ds \tilde{\psi}_{e}-\psi_e-\epsilon\psi_e^{(1)} =12\zeta_0\Big(\frac{\partial \varphi_\epsilon}{\partial \nu_\Omega}\circ \Phi_{\epsilon} -\frac{\partial \varphi}{\partial \nu_\Omega}-\epsilon\Big( \frac{\partial \varphi^{(1)}}{\partial \nu_\Omega}+ g\frac{\partial^2 \varphi}{\partial \nu^2_\Omega}-g'\frac{\partial \varphi}{\partial T_\Omega}\Big)\Big)+O(\epsilon^2)\ \ &\mbox{on } \partial \Omega,
\end{cases}
\end{align}
where $\tilde{\psi}_{i} = \psi_{i,\epsilon} \circ \Psi_{D_\epsilon}$ and $\tilde{\psi}_{e} = \psi_{e,\epsilon} \circ \Psi_{\Omega_\epsilon}$.

The following lemma follows immediately from \eqref{expansion-P-normal-d} and \eqref{diff}. For detailed proof, we refer the reader to Zribi \cite{Zribi2016}.
\begin{lem}
Let $(\psi_{i,\epsilon}, \psi_{e,\epsilon})$, $(\psi_{i}, \psi_{e})$, $(\psi_{i}^{(1)}, \psi_{e}^{(1)})$ be the solutions to \eqref{density-equation}, \eqref{boundary-integral-equation}, \eqref{first-order-density-equation}, respectively. Then there exists a constant $C$ depending only on the $\mathcal{C}^2$-norm of $\partial D$, $\partial \Omega$ and $\parallel f\parallel_{\mathcal{C}^1}$, $\parallel g\parallel_{\mathcal{C}^1}$ such that
\begin{align}\label{p-density-expansion}
  \|\psi_{i,\epsilon}\circ\Psi_{D_\epsilon}-\psi_i -\psi_i^{(1)}\|_{L^2(\p D)}+   \|\psi_{e,\epsilon}\circ\Psi_{\Omega_\epsilon}-\psi_e -\psi_e^{(1)}\|_{L^2(\p \Omega)}\leq C \epsilon^2.
\end{align}
\end{lem}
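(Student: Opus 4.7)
The plan is to read off the estimate directly from the block system \eqref{diff}, which already expresses the residuals $(\tilde{\psi}_i-\psi_i-\epsilon\psi_i^{(1)},\ \tilde{\psi}_e-\psi_e-\epsilon\psi_e^{(1)})$ as the solution of a small perturbation of a boundedly invertible boundary-integral problem driven by an $O(\epsilon^2)$ forcing. Because the second line of \eqref{diff} determines $\tilde{\psi}_e-\psi_e-\epsilon\psi_e^{(1)}$ on $\partial\Omega$ \emph{explicitly}, the strategy decouples: first bound the second component using the rigorous expansion of $\varphi_\epsilon$ in \eqref{varphi-epsilon}; then substitute the resulting bound into the first equation and invert $\tfrac12 I + \mathcal{K}^*_D + \epsilon \mathcal{K}^{(1)}_D$ on $L^2_0(\partial D)$ to bound the first component.

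For the right-hand sides, the forcing of the first equation equals
\[
-\frac{\partial P}{\partial \nu_D}\circ\Psi_{D_\epsilon} + \frac{\partial P}{\partial \nu_D} + \epsilon\Big(f\frac{\partial^2 P}{\partial \nu_D^2} - f'\frac{\partial P}{\partial T_D}\Big) + O(\epsilon^2),
\]
which is $O(\epsilon^2)$ in $L^2(\partial D)$ directly from the Taylor expansion \eqref{expansion-P-normal-d} applied to the smooth harmonic function $P$. The forcing of the second equation is
\[
12\zeta_0\Big(\tfrac{\partial \varphi_\epsilon}{\partial \nu_\Omega}\circ\Psi_{\Omega_\epsilon} - \tfrac{\partial \varphi}{\partial \nu_\Omega} - \epsilon\big(\tfrac{\partial \varphi^{(1)}}{\partial \nu_\Omega} + g\tfrac{\partial^2 \varphi}{\partial \nu_\Omega^2} - g'\tfrac{\partial \varphi}{\partial T_\Omega}\big)\Big) + O(\epsilon^2).
\]
Since $\partial\Omega$ is separated from $\partial D_\epsilon$ by a positive distance for small $\epsilon$, each layer potential in the representation \eqref{varphi-epsilon} is real-analytic on a neighborhood of $\partial\Omega$; combining the density bound of Lemma \ref{density asy} with a second-order Taylor expansion of these layer potentials in the normal direction to $\partial\Omega$ shows this residual is $O(\epsilon^2)$ in $C^1(\partial\Omega)$, and hence in $L^2(\partial\Omega)$.

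For the operator, classical Fredholm theory gives that $\tfrac12 I + \mathcal{K}^*_D$ is invertible on $L^2_0(\partial D)$, while Lemma \ref{lem-KD-epsilon-expansion} and the explicit formula for $\mathcal{A}_\Omega$ yield $\|\epsilon\mathcal{K}^{(1)}_D\| + \|\epsilon\mathcal{A}_\Omega\| = O(\epsilon)$ between the relevant $L^2_0$-spaces. A Neumann series argument therefore shows that $\tfrac12 I + \mathcal{K}^*_D + \epsilon\mathcal{K}^{(1)}_D$ is invertible with inverse bounded uniformly in $\epsilon$ for all $\epsilon$ below an explicit threshold depending only on $\|f\|_{\mathcal{C}^1}$ and $\|\partial D\|_{\mathcal{C}^2}$. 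Transferring the already-controlled $(\tilde{\psi}_e-\psi_e-\epsilon\psi_e^{(1)})$-contribution of \eqref{diff} to the forcing side of the first equation (using that $\partial\mathcal{S}_\Omega[\cdot]/\partial\nu_D + \epsilon\mathcal{A}_\Omega$ is a bounded operator from $L^2_0(\partial\Omega)$ into $L^2(\partial D)$, once more by kernel smoothness on the separated set) and applying this inverse produces the desired $L^2(\partial D)$-bound on $\tilde{\psi}_i-\psi_i-\epsilon\psi_i^{(1)}$.

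The main obstacle is the first task above: upgrading the pointwise expansion \eqref{varphi-epsilon} to a uniform first-derivative expansion of $\varphi_\epsilon$ along $\partial\Omega$. The geometric separation of $\partial D$ and $\partial\Omega$ makes the layer-potential kernels smooth on the product set that appears, so this obstacle ultimately reduces to careful bookkeeping rather than any fundamentally new estimate, matching the treatment in Zribi \cite{Zribi2016} to which the proof is deferred.
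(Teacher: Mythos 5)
Your proposal is correct and follows exactly the route the paper indicates: the paper itself states that the lemma ``follows immediately from \eqref{expansion-P-normal-d} and \eqref{diff}'' and defers the details to Zribi \cite{Zribi2016}, and your decoupling of the two lines of \eqref{diff} (explicit formula for the $\partial\Omega$ component via the $C^1$-expansion of $\varphi_\epsilon$ away from $\partial D$, then Neumann-series inversion of $\tfrac12 I+\mathcal{K}^*_D+\epsilon\mathcal{K}^{(1)}_D$ with the separated smooth kernel absorbing the $\Omega$-contribution into the forcing) is a faithful fleshing-out of that sketch. You also implicitly fix a small typo in \eqref{p-density-expansion}, where $\psi_i^{(1)}$ and $\psi_e^{(1)}$ should carry the factor $\epsilon$ as in \eqref{diff} and \eqref{ep-density-expansion}.
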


After the change of variables $\tilde{y}=\Psi_\epsilon(y)$, we obtain from \eqref{line-D-eps}, \eqref{line-Omega-eps}, (\ref{p-density-expansion}) and the Taylor
expansion of $G(x-\tilde{y})$ in $y \in \partial D$ or $\partial \Omega$ that for $x\in \mathbb{R}^2\setminus\overline{D}$ fixed,
\begin{align}
&\mathcal{S}_{D_\epsilon}[\psi_{i,\epsilon}](x)=\int_{\partial D_\epsilon}G(x,\tilde{y})\psi_{i,\epsilon}(\tilde{y})\d\sigma(\tilde{y}) \nonumber\\ \nonumber
 =&\int_{\partial D}\Big(G(x,y)+\epsilon f(y)\frac{\partial G(x,y)}{\partial\nu_D(y)}\Big)\Big(\psi_i(y)+\epsilon \psi_i^{(1)}(y)\Big)
 \Big(1-\epsilon\tau_D(y)f(y)\Big) \d\sigma(y) + O(\epsilon^2)\\
 =&\mathcal{S}_{D}[\psi_i](x)+\epsilon \Big(\mathcal{S}_{D}[\psi_i^{(1)}](x)-\mathcal{S}_{D}[\tau_{D}f\psi_i](x)+\mathcal{D}_{D}[f\psi_i](x)\Big)
+O(\epsilon^2),\label{SD-expansion}
\end{align}
and
\begin{align}
&\mathcal{S}_{\Omega_\epsilon}[\psi_{e,\epsilon}](x)=\int_{\partial \Omega_\epsilon}G(x,\tilde{y})\psi_{e,\epsilon}(\tilde{y})\d\sigma(\tilde{y})\nonumber\\
 =&\int_{\partial \Omega}\Big(G(x,y)+\epsilon g(y)\frac{\partial G(x,y)}{\partial\nu_\Omega(y)}\Big)\Big(\psi_e(y)+\epsilon \psi_e^{(1)}(y)\Big)
 \Big(1-\epsilon\tau_\Omega(y)g(y)\Big) \d\sigma(y) + O(\epsilon^2) \nonumber\\
 =&\mathcal{S}_{\Omega}[\psi_e](x)+\epsilon\Big(\mathcal{S}_{\Omega}[\psi_e^{(1)}](x)-\mathcal{S}_{\Omega}[\tau_{\Omega}g\psi_e](x)+\mathcal{D}_{\Omega}[g\psi_e](x)\Big)
+O(\epsilon^2).\label{SOmega-expansion}
\end{align}
The following pointwise expansions follow immediately from \eqref{sol-p}, (\ref{SD-expansion}) and (\ref{SOmega-expansion}):
\begin{align}
  p_\epsilon(x)=&
   p(x)+ \epsilon\Big(\mathcal{S}_{D}[\psi_i^{(1)}](x)-\mathcal{S}_{D}[\tau_{D}f\psi_i](x)+\mathcal{D}_{D}[f\psi_i](x)\nonumber \\ &+\mathcal{S}_{\Omega}[\psi_e^{(1)}](x)-\mathcal{S}_{\Omega}[\tau_{\Omega}g\psi_e](x)+\mathcal{D}_{\Omega}[g\psi_e](x)\Big)+O(\epsilon^2), \quad x\in \mathbb{R}^2\setminus\overline{D}.\label{p-epsilon}
\end{align}
We now prove the following representation theorem for the solution $p^{(1)}$ to the first-order coupled system (\ref{first-order-equation}), which will be very helpful in
the proof of Theorem \ref{thm-expansion}.
\begin{thm}\label{thm-p1}
The solution $p^{(1)}$ to (\ref{first-order-equation}) is represented by
\begin{align}\label{p1}
  p^{(1)}=
  \mathcal{S}_{D}[\psi_i^{(1)}-\tau_{D}f\psi_i](x) +\mathcal{D}_{D}[f\psi_i](x)
  + \mathcal{S}_{\Omega}[\psi_e^{(1)}- \tau_{\Omega}g\psi_e](x)+\mathcal{D}_{\Omega}[g\psi_e](x), \quad x\in \mathbb{R}^2\setminus\overline{D},
\end{align}
where $\psi_i$, $\psi_e$, $\psi_i^{(1)}$ and $\psi_e^{(1)}$ are solutions to (\ref{density-equation}) and (\ref{first-order-density-equation}), respectively.
\end{thm}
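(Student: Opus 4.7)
My plan is to verify that the function defined by the right-hand side of (\ref{p1}) satisfies all the conditions of the first-order coupled system (\ref{first-order-equation}): harmonicity away from the two interfaces, the Neumann-type condition on $\partial D$, the two transmission conditions on $\partial \Omega$, and the decay at infinity. Harmonicity is immediate, since each single- and double-layer potential is harmonic on each side of its carrying curve, so the candidate $p^{(1)}$ is harmonic in $(\Omega\setminus\overline{D})\cup(\mathbb{R}^2\setminus\overline{\Omega})$.

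For the boundary condition on $\partial D$, I would take the exterior normal derivative of (\ref{p1}) on $\partial D$. The layers carried by $\partial\Omega$ contribute smoothly at $\partial D$, and a careful computation expresses these contributions through the operator $\mathcal{A}_\Omega[\psi_e]$. Using the jump relation for $\partial\mathcal{S}_D/\partial\nu_D$ together with the identity (\ref{KD1}) for $\mathcal{K}_D^{(1)}$, I can group terms so that the first equation of (\ref{first-order-density-equation}) can be invoked to eliminate $(\frac{1}{2}I+\mathcal{K}_D^*)[\psi_i^{(1)}]$ and the $\mathcal{A}_\Omega[\psi_e]$ block at once. The surviving terms can then be rewritten, via $p=P+\mathcal{S}_D[\psi_i]+\mathcal{S}_\Omega[\psi_e]$, as $f'\partial p/\partial T_D - f \partial^2 p/\partial\nu_D^2 = A$, exactly as in the already-established representation for $\varphi^{(1)}$.

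The two transmission conditions on $\partial\Omega$ are where the new structure appears. Only $\mathcal{D}_\Omega[g\psi_e]$ jumps across $\partial\Omega$, and its jump $\mathcal{D}_\Omega[g\psi_e]|_+ -\mathcal{D}_\Omega[g\psi_e]|_- = -g\psi_e$ gives $p^{(1)}|_+-p^{(1)}|_- = -g\psi_e$; invoking the original transmission $\partial p/\partial\nu_\Omega|_+-\partial p/\partial\nu_\Omega|_- = 12\zeta_0 \partial\varphi/\partial\nu_\Omega = \psi_e$ identifies this quantity with $B$. For the normal-derivative jump, all contributions except $\partial\mathcal{S}_\Omega[\psi_e^{(1)}-\tau_\Omega g\psi_e]/\partial\nu_\Omega$ are continuous across $\partial\Omega$, so the jump equals $\psi_e^{(1)}-\tau_\Omega g\psi_e$; substituting the second equation of (\ref{first-order-density-equation}) for $\psi_e^{(1)}$ produces exactly the $12\zeta_0(\cdot)$ portion of $C$. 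The remaining and genuinely delicate point — which I expect to be the main obstacle in the write-up — is to match $-\tau_\Omega g\psi_e$ with $g(\partial^2 p/\partial\nu_\Omega^2|_- - \partial^2 p/\partial\nu_\Omega^2|_+)$. This requires the Laplace-Beltrami formula in arc-length/normal coordinates along $\partial\Omega$, which under $\Delta p=0$ yields $\partial^2 p/\partial\nu_\Omega^2 = -\partial^2 p/\partial T_\Omega^2 + \tau_\Omega \partial p/\partial\nu_\Omega$ on either side; since $p$ is continuous across $\partial\Omega$ the tangential second derivatives agree, so the jump of $\partial^2 p/\partial\nu_\Omega^2$ reduces to $\tau_\Omega$ times the jump of the first normal derivative, namely $\tau_\Omega \psi_e$, giving the desired identity.

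Finally, for the far-field behavior, the double-layer terms $\mathcal{D}_D[f\psi_i]$ and $\mathcal{D}_\Omega[g\psi_e]$ are automatically $O(|x|^{-1})$, while the single-layer contributions $\mathcal{S}_D[\psi_i^{(1)}-\tau_D f\psi_i]$ and $\mathcal{S}_\Omega[\psi_e^{(1)}-\tau_\Omega g\psi_e]$ are $O(|x|^{-1})$ exactly when their densities have zero mean. Since $\psi_i,\psi_i^{(1)}\in L^2_0(\partial D)$ and $\psi_e,\psi_e^{(1)}\in L^2_0(\partial\Omega)$, this reduces to the same mean-vanishing argument already used in the representation theorem for $\varphi^{(1)}$, completing the verification and hence the proof.
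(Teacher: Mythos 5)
Your proposal follows essentially the same route as the paper: verify harmonicity off the interfaces, use the jump relation for $\partial\mathcal{S}_D/\partial\nu_D$ together with \eqref{KD1} and the first line of \eqref{first-order-density-equation} to produce $A$ on $\partial D$, read off the two transmission conditions on $\partial\Omega$ from the jump relations for $\mathcal{D}_\Omega$ and $\mathcal{S}_\Omega$ plus the second line of \eqref{first-order-density-equation}, and close with the mean-zero decay argument. One small point in your favor: the paper silently passes from $\tau_\Omega g\big(\frac{\partial p}{\partial\nu_\Omega}\big|_- -\frac{\partial p}{\partial\nu_\Omega}\big|_+\big)$ to $g\big(\frac{\partial^2 p}{\partial\nu_\Omega^2}\big|_- -\frac{\partial^2 p}{\partial\nu_\Omega^2}\big|_+\big)$ in the last chain of equalities, whereas you explicitly supply the justification via the arc-length/normal-coordinate form of the Laplacian, $\frac{\partial^2 p}{\partial\nu_\Omega^2}=-\frac{\partial^2 p}{\partial T_\Omega^2}+\tau_\Omega\frac{\partial p}{\partial\nu_\Omega}$ on each side of $\partial\Omega$, combined with continuity of $p$ (hence of its tangential second derivatives) across $\partial\Omega$. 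That is exactly the identity the paper needs, so your write-up would be slightly more complete than the published proof at that one step.
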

\begin{proof}
  One can easily see that
  \[
  \Delta p^{(1)}=0  \quad \mbox{in } \Omega\setminus\overline{D}
  \mbox{\quad and \quad}
   \Delta p^{(1)}=0  \quad \mbox{in } \mathbb{R}^2\setminus\overline{\Omega}.
  \]
 It is clear that  $p^{(1)}$ defined by (\ref{p1}) satisfies the transmission conditions (the conditions on the fifth, sixth and seventh lines in (\ref{first-order-equation})).
  Using (\ref{first-order-density-equation}), we have
 \begin{align*}
   \frac{\p p^{(1)}}{\p \nu_D}\Big|_{+}=& \frac{\mathcal{S}_{D}[\psi_i^{(1)}]}{\p \nu_D}\Big|_{+} -  \frac{\mathcal{S}_{D}[\tau_{D}f\psi_i]}{\p \nu _D}\Big|_{+} + \frac{\mathcal{D}_{D}[f\psi_i]}{\p \nu_D}
  +\frac{ \mathcal{S}_{\Omega}[\psi_e^{(1)}]}{\p \nu_D} - \frac{\mathcal{S}_{\Omega}[\tau_{\Omega}g\psi_e]}{\p \nu_D } + \frac{\p \mathcal{D}_{\Omega}[g\psi_e]}{\p \nu_D }\\
  =& -\mathcal{K}^{(1)}_{D}[\psi_i] -\mathcal{A}_{\Omega}[\psi_e]-f\frac{\partial^2 P}{\partial \nu^2_D}+f'\frac{\partial P}{\partial T_D} -  \frac{\mathcal{S}_{D}[\tau_{D}f\psi_i]}{\p \nu _D}\Big|_{+} + \frac{\mathcal{D}_{D}[f\psi_i]}{\p \nu_D} - \frac{\mathcal{S}_{\Omega}[\tau_{\Omega}g\psi_e]}{\p \nu_D } + \frac{\p \mathcal{D}_{\Omega}[g\psi_e]}{\p \nu_D }\\
  =&\frac{\partial}{\partial T_D}\bigg{(} f\frac{\partial \mathcal{S}_D[\psi_i]}{\partial T_D}\bigg{)}-\tau_D f\frac{\partial \mathcal{S}_{D}[\psi_i]}{\partial \nu_D}\Big|_{+}-f\frac{\partial^2 P}{\partial \nu^2_D}+f'\frac{\partial P}{\partial T_D}- f(x)\frac{\partial^2 \mathcal{S}_{\Omega}[\psi_e] }{\partial \nu^2_{D}} + f' \frac{\partial\mathcal{S}_{\Omega}[\psi_e]}{\partial T_{D}}\\
  =&f' \frac{\partial \mathcal{S}_D[\psi_i]}{\partial T_D} - f \frac{\partial^2 \mathcal{S}_D[\psi_i]}{\partial  \nu^2_D}-f\frac{\partial^2 P}{\partial \nu^2_D}+f'\frac{\partial P}{\partial T_D}- f\frac{\partial^2 \mathcal{S}_{\Omega}[\psi_e] }{\partial \nu^2_{D}} + f' \frac{\partial\mathcal{S}_{\Omega}[\psi_e]}{\partial T_{D}}\\
  =&f'\frac{\partial  p}{\partial T_{D}}-  f \frac{\partial^2  p}{\partial \nu^2_{D}}.
 \end{align*}
 It follows from \eqref{first-order-equation} that
 \begin{align*}
p^{(1)}|_{+}-p^{(1)}|_{-} =-g \psi_e = g\Big(\frac{\partial p}{\partial\nu_{\Omega}}\Big|_{-}-\frac{\partial p}{\partial\nu_{\Omega}}\Big|_{+}\Big).
 \end{align*}
According to \eqref{first-order-equation} and \eqref{first-order-density-equation}, we obtain
  \begin{align*}
\frac{\p p^{(1)}}{\p \nu_\Omega}\Big|_{+} - \frac{p^{(1)}}{\p \nu_\Omega}\Big|_{-} =& \psi_e^{(1)}-\tau_{\Omega}g\psi_e\\
=&12\zeta_0\Big( \frac{\partial \varphi^{(1)}}{\partial \nu_\Omega}+ g\frac{\partial^2 \varphi}{\partial \nu^2_\Omega}-g'\frac{\partial \varphi}{\partial T_\Omega}\Big)
 +\tau_{\Omega}g\Big(\frac{\partial p}{\partial\nu_{\Omega}}\Big|_{-}-\frac{\partial p}{\partial\nu_{\Omega}}\Big|_{+}\Big) \\
 =&12\zeta_0\Big( \frac{\partial \varphi^{(1)}}{\partial \nu_\Omega}+ g\frac{\partial^2 \varphi}{\partial \nu^2_\Omega}-g'\frac{\partial \varphi}{\partial T_\Omega}\Big)
 +g\Big(\frac{\partial^2  p}{\partial \nu^2_{\Omega}}\Big|_{-}- \frac{\partial^2  p}{\partial \nu^2_{\Omega}}\Big|_{+}\Big).
 \end{align*}

Because $\psi_i - \tau_D f \psi_i^{(1)}\in L^2_0(\partial D)$ and $\psi_e - \tau_\Omega g \psi_e^{(1)}\in L^2_0(\partial \Omega)$, we have $\mathcal{S}_\Omega[\psi_i - \tau_D f \psi_i^{(1)}]=O(|x|^{-1})$ and $\mathcal{S}_\Omega[\psi_e - \tau_\Omega g \psi_e^{(1)}]=O(|x|^{-1})$ as $|x|\rightarrow \infty$.
Therefore $p^{(1)}$ defined by (\ref{p1}) satisfies $p^{(1)}=O(|x|^{-1})$ as $|x|\rightarrow \infty$. This finishes the proof of the theorem.
\end{proof}
Theorem \ref{thm-expansion} immediately follows from (\ref{varphi-epsilon}), (\ref{p-epsilon}) and the integral representations of $\varphi^{(1)}$ and $p^{(1)}$ in (\ref{varphi1}) and (\ref{p1}).

From Theorem \ref{thm-p1}, we can rewrite Theorem \ref{thm-cloaking1} as the following theorem.
\begin{thm}
Let $\psi_i$, $\psi_e$, $\psi_i^{(1)}$ and $\psi_e^{(1)}$ be given in Theorem \ref{thm-p1}. Given the shape function $f\in \mathcal{C}^1(\partial D)$, if there is a shape function $g\in \mathcal{C}^1(\partial \Omega)$,
such that
\begin{align*}
 p^{(1)}=0,\quad \ \mbox{in} \  \mathbb{R}^2\setminus\overline{\Omega},
\end{align*}
then 2-order HNCD occurs.
\end{thm}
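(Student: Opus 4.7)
The statement is essentially a reformulation of Theorem \ref{thm-cloaking1} in the language of layer potentials supplied by Theorem \ref{thm-p1}, so the plan is not to introduce any new machinery but to combine three previously established ingredients: the pointwise asymptotic expansion of Theorem \ref{thm-expansion}, the standing perfect cloaking property of the base triple $\{D,\Omega;\zeta_0\}$, and an order-by-order reading of Definition \ref{HCDP}.

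First, I would invoke Theorem \ref{thm-expansion} to write, for $x \in \mathbb{R}^2\setminus\overline{\Omega}$,
\begin{equation*}
p_\epsilon(x) = p(x) + \epsilon\, p^{(1)}(x) + O(\epsilon^2),
\end{equation*}
where the $O(\epsilon^2)$ remainder is uniform in $x$ on compact subsets of $\mathbb{R}^2\setminus\overline{\Omega}$ and is controlled by $\|f\|_{\mathcal{C}^1}$, $\|g\|_{\mathcal{C}^1}$, and the $\mathcal{C}^2$-regularities of $\partial D$ and $\partial \Omega$. Since $\{D,\Omega;\zeta_0\}$ is a perfect hydrodynamic cloaking (the standing assumption of Section \ref{sec-setting-problem}), one has $p(x) = P(x)$ for $x \in \mathbb{R}^2\setminus\overline{\Omega}$, so that
\begin{equation*}
p_\epsilon(x) - P(x) = \epsilon\, p^{(1)}(x) + O(\epsilon^2), \qquad x \in \mathbb{R}^2\setminus\overline{\Omega}.
\end{equation*}

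Next I would apply the hypothesis that $g \in \mathcal{C}^1(\partial \Omega)$ is chosen so that $p^{(1)} \equiv 0$ in $\mathbb{R}^2\setminus\overline{\Omega}$. Via Theorem \ref{thm-p1} this condition is the concrete vanishing of the layer potential representation \eqref{p1}, in which the densities $\psi_i, \psi_e, \psi_i^{(1)}, \psi_e^{(1)}$ are fixed functionals of the given $f$ and the sought $g$. Substituting $p^{(1)} \equiv 0$ into the previous display collapses the expansion to $p_\epsilon(x) - P(x) = O(\epsilon^2)$ in $\mathbb{R}^2\setminus\overline{\Omega}$. To finish, I would match this against Definition \ref{HCDP}: writing $\mathcal{E}(x,\epsilon) = q_0(x) + q_1(x)\epsilon + q_2(x)\epsilon^2 + \cdots$, the remark following Theorem \ref{thm-cloaking1} already supplies $q_0 \equiv 0$ from perfect cloaking of the base structure, the hypothesis delivers $q_1 = p^{(1)} \equiv 0$, and the controlled remainder in Theorem \ref{thm-expansion} guarantees that $q_j$ for $j \geq 2$ is uniformly bounded. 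This is precisely the definition of $2$-order HNCD.

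The main obstacle is not in this theorem itself, which is a one-step corollary, but in verifying its hypothesis in concrete situations: producing a shape function $g \in \mathcal{C}^1(\partial \Omega)$ that, for a given $f$, forces the right-hand side of \eqref{p1} to vanish identically outside $\Omega$. That amounts to an integral identity coupling $(\psi_i^{(1)}, \psi_e^{(1)})$ through the system \eqref{first-order-density-equation}, and its solvability is what the subsequent geometry-specific results (Theorems \ref{thm-near-cloaking-circle} and \ref{thm-near-cloaking-ellipse}) address by expanding in the appropriate radial or elliptic coordinate basis.
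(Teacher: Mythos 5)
Your argument is correct and matches the paper's intended reasoning exactly: the paper states this theorem without proof, simply remarking after Theorem \ref{thm-p1} that it is a rewriting of Theorem \ref{thm-cloaking1}, which in turn is declared "easy to obtain" from Definition \ref{HCDP} and the expansion \eqref{p-expansion} together with $p=P$ outside $\Omega$. You have filled in precisely those details — expansion, perfect-cloaking identity $p=P$ in $\mathbb{R}^2\setminus\overline{\Omega}$, hypothesis $p^{(1)}\equiv 0$, and the uniform boundedness of the remainder — so no gap remains.
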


\begin{rem}\label{rem-representation-formula}
In fact, the solution $\varphi^{(1)}$ to \eqref{first-order-equation} can also be the following representation formula
\begin{equation*}
\varphi^{(1)}=
\mathcal{S}_{D}[\phi_1](x),\quad x\in\mathbb{R}^2\setminus\overline{D},
\end{equation*}
where
 \begin{align*}
\Big(\frac{1}{2} I+\mathcal{K}^*_{D}\Big)[\phi_1]=f'\frac{\partial  \varphi}{\partial T_D}-  f \frac{\partial^2  \varphi}{\partial \nu^2}\quad \mbox{on } \p D.
\end{align*}
And the solution $p^{(1)}$ to \eqref{first-order-equation} can also be the following representation formula
\begin{equation*}
p^{(1)}=
\mathcal{S}_{D}[\psi_1](x) + \mathcal{S}_{\Omega}[\psi_2](x) + \mathcal{D}_{\Omega}[\psi_3](x),\quad x\in\mathbb{R}^2\setminus\overline{D},
\end{equation*}
where
\begin{align*}
\begin{cases}
  \ds \Big(\frac{1}{2} I+\mathcal{K}^*_{D}\Big)[\psi_1] = -\frac{\p \mathcal{S}_{\Omega}[\psi_2] }{\p \nu_D} -\frac{\p \mathcal{D}_{\Omega}[\psi_3] }{\p \nu_D} + A &\quad \mbox{on } \p D, \\
  \ds \psi_2 = C  &\quad \mbox{on } \p \Omega, \\
  \ds \psi_3 = - B  &\quad \mbox{on } \p \Omega.
\end{cases}
\end{align*}
These representation formulas are very useful for dealing with the deformed confocal ellipses case in Subsection \ref{subsec-ellipse}.
\end{rem}

Using the above representation formulas, we can discuss a special case that the shape function $f$ and $g$ are constants.
\begin{rem}
If the shape functions $f$ and $g$ are constants, then $f'=0$ and $g'=0$.  We first have
\begin{align*}
  \varphi^{(1)} = -f \mathcal{S}_D\Big(\frac{1}{2} I+\mathcal{K}^*_{D}\Big)^{-1}\Big[\frac{\partial^2  \varphi}{\partial \nu_D^2}\Big|_{\p D}\Big].
\end{align*}
Let $\bar{\varphi}^{(1)} =- \mathcal{S}_D\Big(\frac{1}{2} I+\mathcal{K}^*_{D}\Big)^{-1}\Big[\frac{\partial^2  \varphi}{\partial \nu_D^2}\big|_{\p D}\Big]$, then $ \varphi^{(1)} =f \bar{\varphi}^{(1)}$.
We next have
\begin{align*}
\begin{cases}
\ds A=-  f \frac{\partial^2  p}{\partial \nu^2_{D}}\quad &\mbox{on } \partial D, \ms \\
\ds B=g\Big(\frac{\partial p}{\partial\nu_{\Omega}}\Big|_{-}-\frac{\partial p}{\partial\nu_{\Omega}}\Big|_{+}\Big)\quad &\mbox{on} \ \partial \Omega,\ms \\
\ds C= g\Big(\frac{\partial^2  p}{\partial \nu^2_{\Omega}}\Big|_{-}- \frac{\partial^2  p}{\partial \nu^2_{\Omega}}\Big|_{+}\Big) + 12\zeta_0 \Big( f\frac{\partial \bar{\varphi}^{(1)}}{\partial \nu_{\Omega}} +g \frac{\partial^2  \varphi}{\partial \nu^2_{\Omega}}\Big)\quad &\mbox{on } \partial \Omega.
\end{cases}
\end{align*}
For variable separation, we denote
\begin{align*}
\bar{\psi}_1=\frac{\partial^2  p}{\partial \nu^2_{D}}, \
\bar{\psi}_3=\frac{\partial p}{\partial\nu_{\Omega}}\Big|_{+}-\frac{\partial p}{\partial\nu_{\Omega}}\Big|_{-},\
\bar{\psi}_{2,1}= \frac{\partial^2  p}{\partial \nu^2_{\Omega}}\Big|_{-}- \frac{\partial^2  p}{\partial \nu^2_{\Omega}}\Big|_{+}, \
\bar{\psi}_{2,2,1}=12 \zeta_0\frac{\partial  \bar{\varphi}^{(1)}}{\partial \nu_{\Omega}},\
\bar{\psi}_{2,2,2}=12 \zeta_0 \frac{\partial^2  \varphi}{\partial \nu^2_{\Omega}}.
\end{align*}
 Then $A=-f \bar{\psi}_1$, $\psi_3= g \bar{\psi}_3$ and $\psi_2= f \bar{\psi}_{2,2,1}+g(\bar{\psi}_{2,1}+\bar{\psi}_{2,2,2})$, it is straightforward to see that
 \begin{align*}
   p^{(1)}=&g\Big(\mathcal{S}_{\Omega}[\bar{\psi}_{2,1}+\bar{\psi}_{2,2,2}]+\mathcal{D}_{\Omega}[\bar{\psi}_3]-  \mathcal{S}_D \Big(\frac{1}{2} I+\mathcal{K}^*_{D}\Big)^{-1}\Big[\frac{\p \mathcal{S}_{\Omega}[\bar{\psi}_{2,1}+\bar{\psi}_{2,2,2}] }{\p \nu_D}+\frac{\p \mathcal{D}_{\Omega}[\bar{\psi}_3] }{\p \nu_D}\Big]\Big)\\
   &+ f \Big(\mathcal{S}_{\Omega}[\bar{\psi}_{2,2,1}]- \mathcal{S}_D \Big(\frac{1}{2} I+\mathcal{K}^*_{D}\Big)^{-1}\Big[\frac{\p \mathcal{S}_{\Omega}[\bar{\psi}_{2,2,1}]}{\p \nu_D}+\bar{\psi}_1\Big]\Big),\quad x\in \R^2\setminus\overline{D}.
 \end{align*}
 Thus if the following equation holds, then $p^{(1)}=0$ in $\R^2\setminus\overline{\Omega}$.
 \begin{align}\label{eq-f-g-constant}
   &g\Big(\mathcal{S}_{\Omega}[\bar{\psi}_{2,1}+\bar{\psi}_{2,2,2}]+\mathcal{D}_{\Omega}[\bar{\psi}_3]-  \mathcal{S}_D \Big(\frac{1}{2} I+\mathcal{K}^*_{D}\Big)^{-1}\Big[\frac{\p \mathcal{S}_{\Omega}[\bar{\psi}_{2,1}+\bar{\psi}_{2,2,2}] }{\p \nu_D}+\frac{\p \mathcal{D}_{\Omega}[\bar{\psi}_3] }{\p \nu_D}\Big]\Big)\nonumber\\
   &+ f \Big(\mathcal{S}_{\Omega}[\bar{\psi}_{2,2,1}]- \mathcal{S}_D \Big(\frac{1}{2} I+\mathcal{K}^*_{D}\Big)^{-1}\Big[\frac{\p \mathcal{S}_{\Omega}[\bar{\psi}_{2,2,1}]}{\p \nu_D}+\bar{\psi}_1\Big]\Big)=0.
 \end{align}
 \end{rem}
The remark would like to illustrate that one can consider solving equation \eqref{eq-f-g-constant} to find a special shape function $g$ when the boundaries $\p D$ and $\p \Omega$ are complex geometries such that the equation \eqref{first-order-equation} can not be solved analytically.

\section{Enhanced hydrodynamic near-cloaking for special perturbed Geometry}\label{sec-hnc}
This section is devoted to the proofs of Theorems \ref{thm-near-cloaking-circle} and \ref{thm-near-cloaking-ellipse}, which determine the conditions for enhanced hydrodynamic near-cloaking.
So far, in the literature \cite{Liu2023}, we know that perfect cloaking occurs on annulus and confocal ellipses.
Hence, in this section, we specifically focus on an object with the shape of a slightly deformed annulus or confocal ellipses cylinder. We show that, compared with the general geometry, the cloaking conditions and the relationship of shapes are quantified more precisely. Before dealing with these cases, we first recall some knowledge about the perfect cloaking in the following lemmas.

\begin{lem}
Let the domains $D$ and $\Omega$ be concentric disks of radii $r_i$ and $r_e$, where $r_e>r_i$. Let $H(x) = r^n\e^{\i n \theta}$ and $P(x) = 12 r^n\e^{\i n \theta}$ for $n\geq 1$. If
\begin{equation}\label{annulus-cloaking-zeta}
    \zeta_0=\frac{2r_i^{2n}r_e^{2n}}{r_e^{4n}-r_i^{4n}},
\end{equation}
then the perfect hydrodynamic cloaking occurs.
\end{lem}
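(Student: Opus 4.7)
The plan is to exploit the rotational symmetry of the concentric annulus and work in polar coordinates $(r,\theta)$, where the Laplacian separates and the datum $H(x) = r^n\e^{\i n\theta}$ (and likewise $P$) excites only a single Fourier mode. I expect closed-form solutions for both $\varphi$ and $p$ via standard separation of variables, with perfect cloaking reducing to a single algebraic equation for $\zeta_0$.

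First I would solve the exterior Neumann problem for $\varphi$. In $\R^2\setminus\overline{D}$ the only harmonic functions of mode $n$ consistent with the far-field condition $\varphi = r^n\e^{\i n\theta}+O(|x|^{-1})$ are
\[
\varphi(r,\theta) = \bigl(r^n + a\,r^{-n}\bigr)\e^{\i n\theta}.
\]
Imposing $\partial_r\varphi=0$ at $r=r_i$ gives $a = r_i^{2n}$, and hence in particular
\[
\frac{\partial \varphi}{\partial\nu_{\Omega}}\bigg|_{r=r_e} = n\,r_e^{-n-1}\bigl(r_e^{2n}-r_i^{2n}\bigr)\e^{\i n\theta}.
\]

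Next I would represent $p$ mode-by-mode in the two regions. In the exterior $|x|>r_e$ write $p = \bigl(12\,r^n + c\,r^{-n}\bigr)\e^{\i n\theta}$; perfect cloaking demands $c=0$. In the annulus $r_i<|x|<r_e$ write $p = \bigl(\alpha\,r^n + \beta\,r^{-n}\bigr)\e^{\i n\theta}$, and the Neumann condition on $\partial D$ forces $\beta = \alpha\,r_i^{2n}$. Continuity of $p$ across $\partial \Omega$ then gives
\[
\alpha\bigl(r_e^{2n}+r_i^{2n}\bigr) = 12\,r_e^{2n},
\qquad\text{i.e.}\qquad \alpha = \frac{12\,r_e^{2n}}{r_e^{2n}+r_i^{2n}}.
\]

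Finally I would impose the jump condition for $\partial_\nu p$ on $\partial\Omega$. Setting $c=0$ and substituting the above values of $\alpha$, $\beta$, and $\partial_\nu\varphi|_{r_e}$ into
\[
\frac{\partial p}{\partial\nu}\bigg|_+ - \frac{\partial p}{\partial\nu}\bigg|_- = 12\,\zeta_0\,\frac{\partial\varphi}{\partial\nu}\bigg|_{r_e}
\]
yields, after cancelling the common factor $n\,r_e^{-n-1}\e^{\i n\theta}$, the single scalar equation
\[
\zeta_0 \;=\; \frac{r_e^{2n}}{r_e^{2n}-r_i^{2n}} - \frac{r_e^{2n}}{r_e^{2n}+r_i^{2n}} \;=\; \frac{2\,r_i^{2n}r_e^{2n}}{r_e^{4n}-r_i^{4n}},
\]
which is exactly \eqref{annulus-cloaking-zeta}. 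The argument is then complete, and the real/imaginary parts handle the $\cos(n\theta)$ and $\sin(n\theta)$ data stated in the lemma. The main obstacle is really just careful bookkeeping of the two transmission conditions at $r=r_e$; no deep estimates are needed, since the geometry diagonalises the problem.
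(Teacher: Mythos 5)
Your proof is correct and follows essentially the same route as the paper's cited treatment: exploit rotational symmetry to reduce to a single Fourier mode, solve the exterior Neumann problem for $\varphi$ and the annular transmission problem for $p$ in closed form, and read off the cloaking condition from the jump relation at $r=r_e$. The only cosmetic remark is that the lemma asserts the implication ``$\zeta_0 = \cdots \Rightarrow$ cloaking'' whereas you argued the converse (impose $c=0$ and solve for $\zeta_0$); since the two transmission conditions at $r_e$ determine $(\alpha,c)$ as an affine function of $\zeta_0$, the two directions are equivalent, but for a polished write-up you would solve for $c$ as a function of $\zeta_0$ and verify it vanishes precisely at the stated value.
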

Simplify \eqref{annulus-cloaking-zeta}, we can obtain
\begin{equation}\label{annulus-cloaking-zeta-v2}
    \zeta_0=\frac{2r_*^{2n}}{r_*^{4n}-1},
\end{equation}
where $r_*=r_e/r_i$. From \eqref{annulus-cloaking-zeta-v2}, it follows that the zeta potential depends only on the rate of inner and outer radii.

After introducing the elliptic coordinates $(\xi, \eta)$ so that $x=(x_1,x_2)$ in Cartesian coordinates are defined by
\begin{align}\label{elliptic-coordinates}
  x_1=l \cosh \xi \cdot \cos \eta, \quad x_2=l \sinh \xi \cdot \sin \eta,\quad \xi \geq 0, \quad 0\leq \eta \leq 2\pi,
\end{align}
where $2l$ is the focal distance, we have the following lemma.
\begin{lem}
Let the boundaries of the domains $D$ and $\Omega$ be confocal ellipses of elliptic radii $\xi_i$ and $\xi_e$, where $\xi_e>\xi_i$.
\begin{itemize}
    \item Let $H(x) = \cosh (n\xi) \cos (n\eta)$ and $P(x) = 12  \cosh (n\xi) \cos (n\eta)$ for $n\geq 1$. If
    \begin{equation}\label{ellipse-cloaking-zeta-x}
    \zeta_0=\frac{ \sinh (n\xi_i)}{\big(\sinh (n\xi_e) - \e^{n(\xi_i-\xi_e)}\sinh (n\xi_i)\big)\cosh (n(\xi_e -\xi_i))},
    \end{equation}
    then the perfect hydrodynamic cloaking occurs.
    \item Let $H(x) = \sinh (n\xi) \sin (n\eta)$ and $P(x) = 12  \sinh (n\xi) \sin (n\eta)$ for $n\geq 1$. If
    \begin{equation}\label{ellipse-cloaking-zeta-y}
    \zeta_0=\frac{ \cosh (n\xi_i)}{\big(\cosh (n\xi_e) - \e^{n(\xi_i-\xi_e)}\cosh (n\xi_i)\big)\cosh (n(\xi_e -\xi_i))},
    \end{equation}
    then the perfect hydrodynamic cloaking occurs.
  \end{itemize}
\end{lem}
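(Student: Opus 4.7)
The plan is to solve the forward problem \eqref{electro-osmotic equation-original} explicitly by separation of variables in the elliptic coordinates $(\xi,\eta)$ introduced in \eqref{elliptic-coordinates} and then read off $\zeta_0$ from the jump condition on $\partial \Omega$. The harmonic functions $2\pi$-periodic in $\eta$ are linear combinations of $\cosh(n\xi)\cos(n\eta)$, $\sinh(n\xi)\cos(n\eta)$, $\cosh(n\xi)\sin(n\eta)$, $\sinh(n\xi)\sin(n\eta)$ (together with $1$ and $\xi$ for $n=0$), and the only non-constant ones that decay at infinity are $e^{-n\xi}\cos(n\eta)$ and $e^{-n\xi}\sin(n\eta)$. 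Since $\partial D = \{\xi = \xi_i\}$ and $\partial \Omega = \{\xi = \xi_e\}$ are level sets of $\xi$, the outward normal derivative is proportional to $\partial_\xi$ and the metric scale factor drops out identically from every Neumann and jump condition.

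For $H = \cosh(n\xi)\cos(n\eta)$ I take the ansatz $\varphi = \cosh(n\xi)\cos(n\eta) + \alpha\, e^{-n\xi}\cos(n\eta)$ on $\mathbb{R}^2 \setminus \overline D$; the $\sin$-modes vanish by orthogonality and the condition $\partial_\xi \varphi(\xi_i,\cdot) = 0$ pins down $\alpha = \sinh(n\xi_i)\,e^{n\xi_i}$. For the pressure I write $p_{\mathrm{in}} = [A\cosh(n\xi) + B\sinh(n\xi)]\cos(n\eta)$ on $\Omega \setminus \overline D$ and $p_{\mathrm{out}} = P + \beta\, e^{-n\xi}\cos(n\eta)$ on $\mathbb{R}^2 \setminus \overline\Omega$. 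The Neumann condition at $\xi_i$ combined with the identity $\cosh A\cosh B - \sinh A\sinh B = \cosh(A - B)$ collapses $p_{\mathrm{in}}$ to $A'\cosh(n(\xi - \xi_i))\cos(n\eta)$. Perfect hydrodynamic cloaking is exactly the requirement $\beta = 0$; continuity of $p$ at $\xi_e$ then forces $A' = 12\cosh(n\xi_e)/\cosh(n(\xi_e - \xi_i))$, and the only relation left to enforce is the jump $\partial_\xi p_{\mathrm{out}} - \partial_\xi p_{\mathrm{in}} = 12\zeta_0\,\partial_\xi \varphi$ at $\xi_e$.

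Collapsing the left-hand side of this last relation by $\sinh A\cosh B - \cosh A\sinh B = \sinh(A - B)$ reduces it to $12n\sinh(n\xi_i)/\cosh(n(\xi_e - \xi_i))$, while substituting the value of $\alpha$ on the right gives $12\zeta_0\, n\big[\sinh(n\xi_e) - e^{n(\xi_i-\xi_e)}\sinh(n\xi_i)\big]$; solving for $\zeta_0$ yields \eqref{ellipse-cloaking-zeta-x}. The second case $H = \sinh(n\xi)\sin(n\eta)$ is entirely parallel: the Neumann condition on $\partial D$ now produces $\alpha = \cosh(n\xi_i)\,e^{n\xi_i}$, the same addition formula collapses $p_{\mathrm{in}}$ to $A'\cosh(n(\xi - \xi_i))\sin(n\eta)$, and the analogous reduction delivers \eqref{ellipse-cloaking-zeta-y}. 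No step is substantively hard; the only care needed is to justify the two ansatz choices by uniqueness (growing modes are excluded by the far-field condition and modes in the orthogonal $\eta$-sector decouple) and to keep track of which hyperbolic identity triggers each cancellation.
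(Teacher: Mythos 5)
Your proof is correct, and it reproduces exactly the explicit expressions \eqref{ellipse-varphi} and \eqref{ellipse-p-dir-x} that the paper later quotes from \cite{Liu2023}. The paper itself does not prove this lemma but recalls it from the earlier reference, whose general machinery is layer-potential based (cf.\ the formulas \eqref{S-ellipse-cos}--\eqref{K-ellipse} used later for the perturbed problem). Your route is a direct separation-of-variables ansatz in elliptic coordinates, which is more elementary than assembling the solution through single- and double-layer potentials but arrives at the same place; the key structural observations --- that $\partial D$ and $\partial\Omega$ are $\xi$-level sets so the scale factor $\gamma$ cancels out of the Neumann and transmission conditions, that the decaying modes are $e^{-n\xi}\cos(n\eta)$ and $e^{-n\xi}\sin(n\eta)$, and that the addition formulas for $\cosh$ and $\sinh$ collapse $p_{\mathrm{in}}$ to $A'\cosh(n(\xi-\xi_i))$ --- are precisely what makes both routes tractable. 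One minor point you should make explicit: the jump condition on $\partial\Omega$ in \eqref{electro-osmotic equation-original} is stated with outward normal derivatives; your $\partial_\xi p_{\mathrm{out}}-\partial_\xi p_{\mathrm{in}}=12\zeta_0\,\partial_\xi\varphi$ is that condition after the common factor $\gamma_e^{-1}$ has been divided out, which is legitimate precisely because $\gamma$ is continuous across $\partial\Omega$ (both sides are evaluated on the same ellipse $\xi=\xi_e$). Your appeal to uniqueness to justify the finite ansatz is appropriate and exactly what keeps the argument clean.
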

The alternate form for \eqref{ellipse-cloaking-zeta-x} and \eqref{ellipse-cloaking-zeta-y} is
\begin{align}\label{ellipse-cloaking-zeta}
  \zeta_0 = \frac{2 \e^{2n\xi_i}\e^{2n\xi_e}}{\e^{4n\xi_e}-\e^{4n\xi_i}}(1\mp\e^{-2n\xi_i})
\end{align}
by using the definition of hyperbolic function, where $"\mp"$ corresponds to the \eqref{ellipse-cloaking-zeta-x} and \eqref{ellipse-cloaking-zeta-y}, respectively.

Here, we  briefly discuss the relationship between the annulus case and the confocal ellipses case. From \eqref{elliptic-coordinates}, it follows that the confocal ellipse is a circle of radius $\frac{1}{2}l\e^{\xi}$ as $\xi\rightarrow \infty$. Consider the boundaries $\p D$ and $\p \Omega$ are confocal ellipses of elliptic radii $\xi_i$ and $\xi_e$ with $\xi_e>\xi_i$ as $\xi_i, \xi_e \rightarrow \infty$. Then the boundaries $\p D$ and $\p \Omega$ are circles of radii $ \frac{1}{2}l\e^{\xi_i}$ and $\frac{1}{2}l\e^{\xi_e}$, respectively.  Let $r_i= \frac{1}{2}l\e^{\xi_i}$ and $r_e= \frac{1}{2}l\e^{\xi_e}$, we find that the equation \eqref{ellipse-cloaking-zeta} is equivalent to \eqref{annulus-cloaking-zeta}. Therefore, the cloaking on annulus is a special case of the cloaking on confocal ellipses in limit.

\subsection{Enhanced near-cloaking on the deformed annulus}\label{subsec-annulus}
In this subsection, we consider the enhanced microscale hydrodynamic near-cloaking when the domains $D$ and
$\Omega$ are concentric disks. We construct the shape function $g$ and derive the enhanced hydrodynamic near-cloaking conditions by calculating the explicit form of the solution. Throughout this subsection, we set
$D :=\{|x| <r_i\}$ and $\Omega :=\{|x| <r_e\}$, where $r_e > r_i$. Then, from \eqref{pertur-D} and \eqref{pertur-Omega}, $\p D_\epsilon$ and $\p \Omega_\epsilon$ in polar coordinates are written as
\begin{align*}
\begin{cases}
  \tilde{x}_{i,1}=(r_i+\epsilon f(\theta))\cos(\theta),\\
   \tilde{x}_{i,2}=(r_i+\epsilon f(\theta))\sin(\theta),
\end{cases}
\quad\mbox{and }\quad
\begin{cases}
  \tilde{x}_{e,1}=(r_e+\epsilon g(\theta))\cos(\theta),\\
   \tilde{x}_{e,2}=(r_e+\epsilon g(\theta))\sin(\theta),
\end{cases}
\end{align*}
where $\tilde{x}_i=(\tilde{x}_{i,1}, \tilde{x}_{i,2})\in \p D_\epsilon$ and  $\tilde{x}_e= (\tilde{x}_{e,1}, \tilde{x}_{e,2})\in \p \Omega_\epsilon$.

For each integer $m\neq 0$ and $a=i, e$, one can easily see that (cf. \cite{Ammari2013-1})
\begin{equation}\label{S-r-ra}
\mathcal{S}_{\Gamma}[\e^{\i m\theta}](x) = \begin{cases}
-\frac{r_a}{2m}\Big( \frac{r}{r_a}\Big)^m \e^{\i m\theta},\quad & |x|=r  < r_a,\\
-\frac{r_a}{2m}\Big( \frac{r_a}{r}\Big)^m \e^{\i m\theta},\quad & |x|=r > r_a,
\end{cases}
\end{equation}
and
\begin{equation}\label{K-r-ra}
\mathcal{K}_{\Gamma}^*[\e^{\i m\theta}](x)=0, \quad \forall m\neq 0.
\end{equation}

We begin with the proof of Theorem \ref{thm-near-cloaking-circle}, where the shape function $g$ is constructed by recursive formulas.
\renewcommand{\proofname}{\indent Proof of Theorem \ref{thm-near-cloaking-circle}}
\begin{proof}
Let $H(x) = r^n\cos (n\theta)$ and $P(x)=12 r^n\cos(n\theta)$ for $n\geq 1$. From Section 4 in \cite{Liu2023}, we have already known
\begin{align}\label{density-annulus-ep}
\phi = -2n r_i^{n-1}\cos(n\theta),
\end{align}
and
\begin{align}\label{disk-varphi}
\varphi = \Big(r^n+\frac{r_i^{2n}}{r^n}\Big)\cos(n\theta).
\end{align}
To determine $\phi^{(1)}$, we first write $f$ as a Fourier series expansion
\begin{align}\label{f-series}
f(\theta)=\frac{a_0}{2}+\sum_{m=1}^\infty a_m\cos(m\theta) + b_m\sin(m\theta),
\end{align}
where the coefficients $a_m$  and  $b_m$  are defined as
\begin{align*}
 a_0 = \frac{1}{\pi}\int_{-\pi}^{\pi} f(\theta) \d \theta, \quad
 a_m = \frac{1}{\pi}\int_{-\pi}^{\pi} f(\theta)  \cos(m\theta) \d \theta,\quad  b_m = \frac{1}{\pi}\int_{-\pi}^{\pi} f(\theta)  \sin(m\theta) \d \theta,  \quad m= 1,  2,\ldots\ .
\end{align*}
Substituting \eqref{density-annulus-ep} and (\ref{f-series}) into \eqref{asy density1} and using \eqref{K-r-ra}, leads to
\begin{align*}
\phi^{(1)}=&- 2 n r_i^{n-2} \sum_{m = n}^\infty m \big((a_{m-n}-a_{m+n}) \cos (m \theta) + (b_{m-n}-b_{m+n})\sin(m\theta)\big).
\end{align*}
Therefore by \eqref{varphi1} and \eqref{S-r-ra}, the first-order solution to the electrostatic potential is given by
\begin{align}\label{annulus-varphi1}
\varphi^{(1)}= n \sum_{m = n}^\infty r_i^{m+n-1} r^{-m} \big((a_{m-n}-a_{m+n}) \cos (m \theta) + (b_{m-n}-b_{m+n})\sin(m\theta)\big).
\end{align}

Before proceeding, we write $g$ as a Fourier series expansion
\begin{align}\label{g-series}
g(\theta)=\frac{d_0}{2}+\sum_{m=0}^\infty d_m\cos(m\theta) + h_m\sin(m\theta),
\end{align}
where the coefficients $d_m$  and  $h_m$ are defined as
\begin{align*}
 d_0 = \frac{1}{\pi}\int_{-\pi}^{\pi} g(\theta) \d \theta, \quad  d_m = \frac{1}{\pi}\int_{-\pi}^{\pi} g(\theta)  \cos(m\theta) \d \theta,\quad  h_m = \frac{1}{\pi}\int_{-\pi}^{\pi} g(\theta)  \sin(m\theta) \d \theta,  \quad m= 1,  2,\ldots\ .
\end{align*}
From Section 4 in \cite{Liu2023}, we also have already known
\begin{align}
\label{density-annulus}
\begin{cases}
\ds \psi_i = 12n\frac{r_i^{n-1}}{r_e^{2n}}\Big((r_e^{2n}-r_i^{2n})\zeta_0-2r_e^{2n}\Big)\cos(n\theta), \ms \\
\ds \psi_e = 12 n \zeta_0 \Big(r_e^{n-1}-\frac{r_i^{2n}}{r_e^{n+1}}\Big)\cos(n\theta).
\end{cases}
\end{align}
Substituting (\ref{disk-varphi})--(\ref{density-annulus}) into \eqref{first-order-density-equation} and using \eqref{K-r-ra}, by solving the equation \eqref{first-order-density-equation} we obtain
\begin{align*}
 \psi_i^{(1)}= & 6n\zeta_0\Big(- 2\sum_{m = n}^\infty m r_i^{2m+n-2} r_e^{-2m} \big((a_{m-n}-a_{m+n}) \cos (m \theta) + (b_{m-n}-b_{m+n})\sin(m\theta)\big)\\
&+  \Big(r_e^{n-2}+ \frac{r_i^{2n}}{r_e^{n+2}}\Big)\sum_{m=n}^{\infty} m\Big(\frac{r_i}{r_e}\Big)^{m-1}\big((d_{m-n}-d_{m+n})\cos(m\theta)+(h_{m-n}-h_{m+n})\sin(m\theta)\big)\\
&-  \Big(r_e^{n-2}-\frac{r_i^{2n}}{r_e^{n+2}}\Big) \sum_{m=n}^{\infty} m \Big(\frac{r_i}{r_e}\Big)^{m-1} \big((d_{m-n}+d_{m+n})\cos(m\theta)+(h_{m-n}+h_{m+n})\sin(m\theta)\big)\Big) \\
&-12n\frac{r_i^{n-2}}{r_e^{2n}}\Big((r_e^{2n}-r_i^{2n})\zeta_0-2r_e^{2n}\Big) \sum_{m=n}^{\infty} m\big((a_{m-n}+a_{m+n})\cos(m\theta)+(b_{m-n}+b_{m+n})\sin(m\theta)\big)\\
&+\frac{12n r_i^{n-2}}{r_e^{2n}}\Big((r_e^{2n}-r_i^{2n})\zeta_0-2r_e^{2n}\Big) \sum_{m = n}^\infty m \big((a_{m-n}-a_{m+n}) \cos (m \theta) + (b_{m-n}-b_{m+n})\sin(m\theta)\big),
\end{align*}
and
\begin{align*}
\psi_e^{(1)} =&  6n\zeta_0\Big(-2 \sum_{m = n}^\infty m r_i^{m+n-1} r_e^{-m-1} \big((a_{m-n}-a_{m+n}) \cos (m \theta) + (b_{m-n}-b_{m+n})\sin(m\theta)\big)\\
&+  \Big(r_e^{n-2}+ \frac{r_i^{2n}}{r_e^{n+2}}\Big)\sum_{m=n}^{\infty} m\big((d_{m-n}-d_{m+n})\cos(m\theta)+(h_{m-n}-h_{m+n})\sin(m\theta)\big)\\
& -\Big(r_e^{n-2}-\frac{r_i^{2n}}{r_e^{n+2}}\Big)\sum_{m=n}^{\infty} (d_{m-n}+d_{m+n})\cos(m\theta)+(h_{m-n}+h_{m+n})\sin(m\theta)\Big).
\end{align*}
Using \eqref{p1} and \eqref{S-r-ra} we have
\begin{align*}
p^{(1)} = \sum_{m=n}^{\infty} r^{-m}\big(M_{m,n}^{1}\cos(m\theta)+ M_{m,n}^{2}\sin(m\theta)\big),\quad r > r_e,
\end{align*}
where
\begin{align*}
  M_{m,n}^{1}=  \frac{6n \zeta_0}{r_e^{2m}}\Big[\Big(r_i^{3m+n-1} + r_e^{2(m-n)}\Big(r_i^{3n+m-1}+\frac{2r_i^{n+m-1} r_e^{2n}}{\zeta_0}\Big)\Big)(a_{m-n} - a_{m+n})\\
  -r_e^{m-n-1}\big((r_i^{2(m+n)}+r_e^{2(m+n)})d_{m-n}-(r_i^{2m}r_e^{2n}+r_i^{2n}r_e^{2m})d_{m+n}\big)\Big],
\end{align*}
\begin{align*}
  M_{m,n}^{2}=  \frac{6n \zeta_0}{r_e^{2m}}\Big[\Big(r_i^{3m+n-1} + r_e^{2(m-n)}\Big(r_i^{3n+m-1}+\frac{2r_i^{n+m-1} r_e^{2n}}{\zeta_0}\Big)\Big)(b_{m-n} - b_{m+n})\\
  -r_e^{m-n-1}\big((r_i^{2(m+n)}+r_e^{2(m+n)})h_{m-n}-(r_i^{2m}r_e^{2n}+r_i^{2n}r_e^{2m})h_{m+n}\big)\Big].
\end{align*}
Here $\zeta_0=$ satisfies \eqref{annulus-cloaking-zeta}, and $ M_{m,n}^{1}$, $ M_{m,n}^{2}$ are called first-order scattering coefficients.

If one needs $p^{(1)}=0 $ for $ r>r_e$, then the following conditions should be satisfied:
{\small
\begin{align}\label{recursive equations-d-cos}
r_i^{m-n-1}&\big(r_i^{2(m+n)}+r_e^{2(m+n)}\big)(a_{m-n}-a_{m+n}) - r_e^{m-n-1}\big(r_i^{2(m+n)}+r_e^{2(m+n)}\big)d_{m-n}\nonumber\\
&+r_e^{m+n-1}\big(r_i^{2m}+r_i^{2n}r_e^{2(m-n)}\big)d_{m+n}=0,
\end{align}}
and
{\small
\begin{align}\label{recursive equations-h-cos}
r_i^{m-n-1}&\big(r_i^{2(m+n)}+r_e^{2(m+n)}\big)(b_{m-n}-b_{m+n})-r_e^{m-n-1}\big(r_i^{2(m+n)}+r_e^{2(m+n)}\big)h_{m-n}\nonumber\\
&+r_e^{m+n-1}\big(r_i^{2m}+r_i^{2n}r_e^{2(m-n)}\big)h_{m+n}=0.
\end{align}}
The recursive equations (\ref{recursive equations-d-cos}) and (\ref{recursive equations-h-cos}) define the shape of the cloaking region relating it to the shape of the object.
Rearranging and changing the subscripts according to $m \rightarrow m + n$ yields
\begin{align}\label{recursive equations-d-1-cos}
 d_m=\frac{r_i^{2(m+n)}r_e^{2n}+r_i^{2n}r_e^{2(m+n)}}{r_i^{2(m+2n)}+r_e^{2(m+2n)}}d_{m+2n}+\Big(\frac{r_i}{r_e}\Big)^{m-1}(a_{m}-a_{m+2n}), \\ h_m=\frac{r_i^{2(m+n)}r_e^{2n}+r_i^{2n}r_e^{2(m+n)}}{r_i^{2(m+2n)}+r_e^{2(m+2n)}}h_{m+2n}+\Big(\frac{r_i}{r_e}\Big)^{m-1}(b_{m}-b_{m+2n}),\label{recursive equations-h-1-cos}
\end{align}
or
\begin{align}\label{recursive equations-d-2-cos}
 d_{m+2n}=\frac{r_i^{2(m+2n)}+r_e^{2(m+2n)}}{r_i^{2(m+n)}r_e^{2n}+r_i^{2n}r_e^{2(m+n)}}\Big[d_{m}-\Big(\frac{r_i}{r_e}\Big)^{m-1}(a_{m}-a_{m+2n})\Big], \\ h_{m+2n}=\frac{r_i^{2(m+2n)}+r_e^{2(m+2n)}}{r_i^{2(m+n)}r_e^{2n}+r_i^{2n}r_e^{2(m+n)}}\Big[h_{m}-\Big(\frac{r_i}{r_e}\Big)^{m-1}(b_{m}-b_{m+2n})\Big].\label{recursive equations-h-2-cos}
\end{align}

Let $H(x) = r^n\sin (n\theta)$ and $P(x)=12 r^n\sin(n\theta)$ for $n\geq 1$. In a similar way, we have
\begin{align*}
\varphi^{(1)}= n \sum_{m = n}^\infty r_i^{m+n-1} r^{-m} \big((a_{m-n}+a_{m+n}) \sin (m \theta) - (b_{m-n}+b_{m+n})\cos(m\theta)\big),
\end{align*}
and
\begin{align*}
p^{(1)} = \sum_{m=n}^{\infty} r^{-m}\big(M_{m,n}^{1}\sin(m\theta)+ M_{m,n}^{2}\cos(m\theta)\big),
\end{align*}
where
\begin{align*}
  M_{m,n}^{1}=  \frac{6n \zeta_0}{r_e^{2m}}\Big[\Big(r_i^{3m+n-1} + r_e^{2(m-n)}\Big(r_i^{3n+m-1}+\frac{2r_i^{n+m-1} r_e^{2n}}{\zeta_0}\Big)\Big)(a_{m-n} + a_{m+n})\\
  -r_e^{m-n-1}\big((r_i^{2(m+n)}+r_e^{2(m+n)})d_{m-n}+(r_i^{2m}r_e^{2n}+r_i^{2n}r_e^{2m})d_{m+n}\big)\Big],
\end{align*}
\begin{align*}
  M_{m,n}^{2}=  -\frac{6n \zeta_0}{r_e^{2m}}\Big[\Big(r_i^{3m+n-1} + r_e^{2(m-n)}\Big(r_i^{3n+m-1}+\frac{2r_i^{n+m-1} r_e^{2n}}{\zeta_0}\Big)\Big)(d_{m-n} + d_{m+n})\\
  -r_e^{m-n-1}\big((r_i^{2(m+n)}+r_e^{2(m+n)})h_{m-n}+(r_i^{2m}r_e^{2n}+r_i^{2n}r_e^{2m})h_{m+n}\big)\Big].
\end{align*}
Here $\zeta_0=$ satisfies \eqref{annulus-cloaking-zeta}, and $ M_{m,n}^{1}$, $ M_{m,n}^{2}$ are called first-order scattering coefficients.

If one needs $p^{(1)}=0$, then the following conditions should be satisfied:
{\small
\begin{align}\label{recursive equations-d-sin}
r_i^{m-n-1}&\big(r_i^{2(m+n)}+r_e^{2(m+n)}\big)(a_{m-n}+a_{m+n})-r_e^{m-n-1}\big(r_i^{2(m+n)}+r_e^{2(m+n)}\big)d_{m-n}\nonumber\\
&-r_e^{m+n-1}\big(r_i^{2m}+r_i^{2n}r_e^{2(m-n)}\big)d_{m+n}=0,
\end{align}}
and
{\small
\begin{align}\label{recursive equations-h-sin}
r_i^{m-n-1}&\big(r_i^{2(m+n)}+r_e^{2(m+n)}\big)(b_{m-n}+b_{m+n})-r_e^{m-n-1}\big(r_i^{2(m+n)}+r_e^{2(m+n)}\big)h_{m-n}\nonumber\\
&-r_e^{m+n-1}\big(r_i^{2m}+r_i^{2n}r_e^{2(m-n)}\big)h_{m+n}=0.
\end{align}}
The recursive equations (\ref{recursive equations-d-sin}) and (\ref{recursive equations-h-sin}) define the shape of the cloaking region relating it to the shape of the object.
Rearranging and changing the subscripts according to $m \rightarrow m + n$ yields
\begin{align}\label{recursive equations-d-1-sin}
 d_m=-\frac{r_i^{2(m+n)}r_e^{2n}+r_i^{2n}r_e^{2(m+n)}}{r_i^{2(m+2n)}+r_e^{2(m+2n)}}d_{m+2n}+\Big(\frac{r_i}{r_e}\Big)^{m-1}(a_{m}+a_{m+2n}), \\ h_m=-\frac{r_i^{2(m+n)}r_e^{2n}+r_i^{2n}r_e^{2(m+n)}}{r_i^{2(m+2n)}+r_e^{2(m+2n)}}h_{m+2n}+\Big(\frac{r_i}{r_e}\Big)^{m-1}(b_{m}+b_{m+2n}), \label{recursive equations-h-1-sin}
\end{align}
or
\begin{align}\label{recursive equations-d-2-sin}
 d_{m+2n}=-\frac{r_i^{2(m+2n)}+r_e^{2(m+2n)}}{r_i^{2(m+n)}r_e^{2n}+r_i^{2n}r_e^{2(m+n)}}\Big[d_{m}-\Big(\frac{r_i}{r_e}\Big)^{m-1}(a_{m}+a_{m+2n})\Big], \\ h_{m+2n}=-\frac{r_i^{2(m+2n)}+r_e^{2(m+2n)}}{r_i^{2(m+n)}r_e^{2n}+r_i^{2n}r_e^{2(m+n)}}\Big[h_{m}-\Big(\frac{r_i}{r_e}\Big)^{m-1}(b_{m}+b_{m+2n})\Big]. \label{recursive equations-h-2-sin}
\end{align}

Because of the recursive nature of (\ref{recursive equations-d-1-cos})--(\ref{recursive equations-h-2-cos}) (or (\ref{recursive equations-d-1-sin})--(\ref{recursive equations-h-2-sin}) ), there is freedom in the choice of $4n$ coefficients $d_{m+1}\cdots d_{m+2n}$,
and $h_{m+1} \cdots h_{m+2n}$, which affects the values and the number of non-vanishing coefficients. To obtain a finite number of
non-vanishing coefficients $d_m$ and $h_m$, we first define $m_{max}$ as the maximum subscript of the non-vanishing coefficients
$a_m$ and $b_m$. Thus, $a_m = b_m \equiv 0$ for $m > m_{max}$. Next, we set
\begin{align}\label{d-h-nmax}
  d_{m_{max}+1}=\cdots=  d_{m_{max}+2n}=  h_{m_{max}+1}= \cdots= h_{m_{max}+2n}=0.
\end{align}
From (\ref{recursive equations-d-cos})--(\ref{d-h-nmax}) and the definition of $n_{max}$, it follows that the coefficients with larger values of the subscript also
vanish,
\begin{align*}
  d_m=0, \  h_m=0,\quad  \mbox{for } m> m_{max}+2n.
\end{align*}
Hence a shape function $g$ can be constructed by $d_m =\{d_1,d_2,\ldots,d_{m_{max}} \}$ and  $h_m =\{h_1,h_2,\ldots,h_{m_{max}} \}$, where $d_m$ and $h_m$ are
determined from (\ref{recursive equations-d-1-cos}), (\ref{recursive equations-h-1-cos}) (or (\ref{recursive equations-d-1-sin}), (\ref{recursive equations-h-1-sin})) and (\ref{d-h-nmax}).

The proof is complete.
\end{proof}

\begin{rem}
Throughout this paper, we call these recursive formulas in proof the enhanced hydrodynamic near-cloaking conditions, which determines the existence of $g$.
\end{rem}

\begin{rem}\label{rem-special-circle}
From the recursive formulas, it follows that if $f= \frac{a_0}{2}$, then $g =\frac{r_e}{r_i} \frac{a_0}{2}$, and if $f= a_1 \cos(\theta)$ (or $a_1 \sin(\theta)$), then $ g =a_1 \cos(\theta)$ (or $a_1 \sin(\theta)$).
\end{rem}
Remark \ref{rem-special-circle} shows the shape function of the outer boundary is held constant when the shape function of the inner boundary is constant. Moreover, the factor is the ratio of the inner and outer radii, which does not depend on the background field. In fact, the structure is a perfect cloaking since the zeta potential satisfying perfect cloaking is a function of the ratio of the inner and outer radii, which corresponds to \eqref{annulus-cloaking-zeta-v2}. We also note that the shape functions of the inner and outer boundaries are the same when $f$ and $g$ are linear. Remark \ref{rem-special-circle} is verified numerically in Section \ref{sec-num-sim}.

\subsection{Enhanced near-cloaking on the deformed confocal ellipses}\label{subsec-ellipse}
In this subsection, we consider the enhanced microscale hydrodynamic near-cloaking when the boundaries of domains $D$
and $\Omega$ are confocal ellipses. We construct the shape function $g$ and derive the enhanced hydrodynamic near-cloaking conditions by calculating the explicit form of the solution. Throughout this subsection, we set
$$
\p D := \{ (\xi, \eta) : \xi = \xi_i\} \quad \mbox{and} \quad \p \Omega := \{ (\xi, \eta) : \xi = \xi_e\},
$$
where the numbers $\xi_i$ and $ \xi_e$ are called the elliptic radius of $\p D$ and $\p \Omega$, respectively.

Let $\p \Gamma = \{ (\xi, \eta) : \xi = \xi_a\}$ for $a=i, e$.
 The outward normal vector on $\p \Gamma$  is
\begin{align}\label{normal-vector}
  \nu_a =\gamma_a^{-1}(l\sinh \xi_a\cos \eta,l\cosh\xi_a\sin\eta),
\end{align}
for $a=i, e$, where
\begin{equation*}
\gamma_a =  \gamma (\xi_a, \eta) = l \sqrt{\sinh^2\xi_a+\sin^2\eta}.
\end{equation*}
By \eqref{normal-vector}, one can see easily that the length element $\d s$, the outward normal derivative $\frac{\p}{\p \nu}$ and tangent derivative $\frac{\p}{\p T}$ on $\p \Gamma$ are given in terms of the elliptic coordinates by
\begin{equation}\label{normal-tangent-derivatie}
  \d s = \gamma_a \d \eta, \quad \frac{\p}{\p \nu} = \gamma_a^{-1}\frac{\p}{\p \xi} \quad \mbox{and} \quad \frac{\p}{\p T} = \gamma_a^{-1}\frac{\p}{\p \eta}.
\end{equation}
To proceed, it is convenient to use the following notation: for $a=i, e$ and $m=1,2,\dots$,
\begin{align*}
  \beta_m^{c,a} := \gamma (\xi_a, \eta)^{-1} \cos (m\eta) \quad \mbox{and} \quad \beta_m^{s,a} := \gamma (\xi_a, \eta)^{-1} \sin (m\eta).
\end{align*}
From \eqref{pertur-D},\eqref{pertur-Omega} and \eqref{normal-vector}, $\p D_\epsilon$ and $\p \Omega_\epsilon$ in elliptic coordinates can be written as
\begin{align*}
\begin{cases}
  \tilde{x}_{i,1}=l (\cosh \xi_i  +\epsilon f \gamma_i^{-1}\sinh \xi_i)\cos\eta, \\
  \tilde{x}_{i,2}=l (\sinh \xi_i  + \epsilon f \gamma_i^{-1}\cosh\xi_i)\sin\eta,\quad
\end{cases}
\end{align*}
and
\begin{align*}
\begin{cases}
  \tilde{x}_{e,1}=l (\cosh \xi_e  +\epsilon g \gamma_e^{-1}\sinh\xi_e)\cos\eta, \\
  \tilde{x}_{e,2}=l (\sinh \xi_e  + \epsilon g \gamma_e^{-1}\cosh\xi_e)\sin\eta,\quad
\end{cases}
\end{align*}
where $\tilde{x}_i=(\tilde{x}_{i,1}, \tilde{x}_{i,2})\in \p D_\epsilon$ and  $\tilde{x}_e= (\tilde{x}_{e,1}, \tilde{x}_{e,2})\in \p \Omega_\epsilon$.

Before proceeding, we have a brief discussion. After many attempts, we find that it is difficult to find recursive equations by the explicit form of the solution to \eqref{first-order-equation}, which is different from the annulus case. To find recursive equations like that of annulus, we need to decompose the equation \eqref{first-order-equation} into two equations, in which the first equation is dominant. We can find recursive equations similar to the annulus case by the explicit form of the solution to this leading equation. Hence, from the principle of superposition we have the following decomposition:
\begin{align*}
\begin{cases}
  \varphi^{(1)} =    \varphi^{(1,0)}+  \varphi^{(1,1)},\\
  p^{(1)} =    p^{(1,0)}+  p^{(1,1)},\\
  E=E_0+E_1,\\
  A=A_0+A_1,\\
   B =B_0+B_1,\\
  C=C_0+C_1,
\end{cases}
\end{align*}
where $ \varphi^{(1,0)}$ and $  p^{(1,0)}$ satisfy
\begin{align}\label{first-order-equation-leader}
\begin{cases}
\ds \Delta \varphi ^{(1,0)}= 0  \quad \ &\mbox{in} \  \mathbb{R}^2\setminus\overline{D},\ms \\
\ds \frac{\partial \varphi^{(1,0)}}{\partial \nu_{D}} =E_0 \quad &\mbox{on } \partial D,\ms \\
\ds  \varphi^{(1,0)} = O(|x|^{-1})\ \ &as\ |x|\rightarrow +\infty,\ms \\
\ds \Delta p^{(1,0)}= 0 \quad \ &\mbox{in} \  \mathbb{R}^2\setminus\overline{D},\ms \\
\ds \frac{\partial p^{(1,0)}}{\partial \nu_{D}} = A_0 \quad &\mbox{on } \partial D,\ms \\
\ds p^{(1,0)}|_{+}-p^{(1,0)}|_{-}=B_0 \quad &\mbox{on} \ \partial \Omega,\ms \\
\ds \frac{\partial p^{(1,0)}}{\partial \nu_{\Omega}} \big|_{+} - \frac{\partial p^{(1,0)}}{\partial \nu_{\Omega}} \big|_{-} = C_0 \quad &\mbox{on } \partial \Omega, \ms \\
\ds  p^{(1,0)} = O(|x|^{-1})\ \ &as\ |x|\rightarrow +\infty,
\end{cases}
\end{align}
and  $ \varphi^{(1,1)}$ and $  p^{(1,1)}$ satisfy
\begin{align*}
\begin{cases}
\ds \Delta \varphi ^{(1,1)}= 0  \quad \ &\mbox{in} \  \mathbb{R}^2\setminus\overline{D},\ms \\
\ds \frac{\partial \varphi^{(1,1)}}{\partial \nu_{D}} =E_1 \quad &\mbox{on } \partial D,\ms \\
\ds  \varphi^{(1,1)} = O(|x|^{-1})\ \ &as\ |x|\rightarrow +\infty,\ms \\
\ds \Delta p^{(1,1)}= 0 \quad \ &\mbox{in} \  \mathbb{R}^2\setminus\overline{D},\ms \\
\ds \frac{\partial p^{(1,1)}}{\partial \nu_{D}} = A_1 \quad &\mbox{on } \partial D,\ms \\
\ds p^{(1,1)}|_{+}-p^{(1,1)}|_{-}=B_1 \quad &\mbox{on} \ \partial \Omega,\ms \\
\ds \frac{\partial p^{(1,1)}}{\partial \nu_{\Omega}} \big|_{+} - \frac{\partial p^{(1,1)}}{\partial \nu_{\Omega}} \big|_{-} = C_1 \quad &\mbox{on } \partial \Omega, \ms \\
\ds  p^{(1,1)} = O(|x|^{-1})\ \ &as\ |x|\rightarrow +\infty.
\end{cases}
\end{align*}

To decompose the boundary terms, we write $\gamma_a^{-1}$ as a Fourier series expansion
\begin{align*}
\gamma_a^{-1}=\sum_{m=0}^\infty c_{a,2m}\cos(2m\eta),
\end{align*}
since $\gamma_a^{-1}$ is even function with respect to $\eta$, where the coefficients $d_m$  and  $h_m$ are defined as
\begin{align*}
 c_{a,0} = \frac{1}{2\pi}\int_{-\pi}^{\pi}\gamma_a^{-1} \d \eta, \quad  c_{a,2m} = \frac{1}{\pi}\int_{-\pi}^{\pi}\gamma_a^{-1} \cos(2m\eta) \d \eta, \quad a=i, e,\quad m= 1,  2,\ldots \ .
\end{align*}
For simplicity, we write
\begin{align}\label{gamma-a-inv}
\gamma_a^{-1}=\gamma_{a,0}^{-1}+ \gamma_{a,1}^{-1},
\end{align}
with
\begin{align*}
\gamma_{a,0}^{-1} =c_{a,0} \quad \mbox{and}\quad
\gamma_{a,1}^{-1} =\sum_{m=1}^\infty c_{a,2m}\cos(2m\eta).
\end{align*}
Then by \eqref{normal-tangent-derivatie} and \eqref{gamma-a-inv}, these boundary terms in \eqref{boundary-term} can be decomposed into a summation. We first have
{\small
\begin{align*}
  E=&\frac{\p f}{\p T_D}\frac{\partial  \varphi}{\partial T_D}-  f \frac{\partial^2  \varphi}{\partial \nu^2_{D}} =\gamma_i^{-2}\Big(\frac{\d f}{\d \eta} \frac{\p \varphi}{\p \eta}\Big|_{\xi=\xi_i}- f(\eta)\frac{\p^2 \varphi}{\p \xi^2}\Big|_{\xi=\xi_i} \Big)
  =\big(\gamma_{i,0}^{-1}+ \gamma_{i,1}^{-1}\big)\gamma_i^{-1}\Big(\frac{\d f}{\d \eta} \frac{\p \varphi}{\p \eta}\Big|_{\xi=\xi_i}- f(\eta)\frac{\p^2 \varphi}{\p \xi^2}\Big|_{\xi=\xi_i} \Big)\\
  =&\gamma_{i,0}^{-1}\gamma_i^{-1}\Big(\frac{\d f}{\d \eta} \frac{\p \varphi}{\p \eta}\Big|_{\xi=\xi_i}- f(\eta)\frac{\p^2 \varphi}{\p \xi^2}\Big|_{\xi=\xi_i} \Big)+ \gamma_{i,1}^{-1}\gamma_i^{-1}\Big(\frac{\d f}{\d \eta} \frac{\p \varphi}{\p \eta}\Big|_{\xi=\xi_i}- f(\eta)\frac{\p^2 \varphi}{\p \xi^2}\Big|_{\xi=\xi_i} \Big)\\
  =&E_0+E_1
\end{align*}}
on $\p D$. Next, we also obtain
{\small
\begin{align*}
  A  =&\frac{\p f}{\p T_D}\frac{\p p}{\p T_D} - f \frac{\p^2 p}{\p \nu_D^2}
      =\gamma_i^{-2}\Big(\frac{\d f}{\d \eta} \frac{\p p}{\p \eta}\Big|_{\xi=\xi_i}- f(\eta)\frac{\p^2 p}{\p \xi^2}\Big|_{\xi=\xi_i} \Big)
      = \big(\gamma_{i,0}^{-1}+ \gamma_{i,1}^{-1}\big)\gamma_i^{-1}\Big(\frac{\d f}{\d \eta} \frac{\p p}{\p \eta}\Big|_{\xi=\xi_i}- f(\eta)\frac{\p^2 p}{\p \xi^2}\Big|_{\xi=\xi_i} \Big)\\
      =&\gamma_{i,0}^{-1}\gamma_i^{-1}\Big(\frac{\d f}{\d \eta} \frac{\p p}{\p \eta}\Big|_{\xi=\xi_i}- f(\eta)\frac{\p^2 p}{\p \xi^2}\Big|_{\xi=\xi_i} \Big)+\gamma_{i,1}^{-1}\gamma_i^{-1}\Big(\frac{\d f}{\d \eta} \frac{\p p}{\p \eta}\Big|_{\xi=\xi_i}- f(\eta)\frac{\p^2 p}{\p \xi^2}\Big|_{\xi=\xi_i} \Big)\\
      =&A_0+A_1
\end{align*}}
on $\p D$, and
{\small
\begin{align*}
  B =& g\Big(\frac{\p p}{\p \nu_\Omega}\Big|_{-} - \frac{\p p}{\p \nu_\Omega}\Big|_{+} \Big)
     = -12\zeta_0 g\frac{\p \varphi}{\p \nu_\Omega}
     = -12\zeta_0 \gamma_e^{-1} g\frac{\p \varphi}{\p \xi}\Big|_{\xi=\xi_e}
     =-12\zeta_0 \big(\gamma_{e,0}^{-1}+\gamma_{e,1}^{-1}\big) g\frac{\p \varphi}{\p \xi}\Big|_{\xi=\xi_e}\\
        =&\Big(-12\zeta_0 \gamma_{e,0}^{-1} g\frac{\p \varphi}{\p \xi}\Big|_{\xi=\xi_e}\Big)+\Big(-12\zeta_0 \gamma_{e,1}^{-1} g\frac{\p \varphi}{\p \xi}\Big|_{\xi=\xi_e}\Big)\\
        =&B_0 +B_1
\end{align*}}
on $\p D$. We finally get
{\small
\begin{align*}
  C =& 12\zeta_0\Big(\frac{\partial \varphi^{(1)}}{\partial \nu_\Omega}+ g\frac{\partial^2 \varphi}{\partial \nu^2_\Omega}-g'\frac{\partial \varphi}{\partial T_\Omega}\Big)
  = 12\zeta_0 \gamma_e^{-1}\frac{\partial \varphi^{(1)}}{\p \xi}\Big|_{\xi=\xi_e}+12\zeta_0 \gamma_e^{-2}\Big( g\frac{\partial^2 \varphi}{\p \xi^2}\Big|_{\xi=\xi_e}- \frac{\d g}{\d \eta}\frac{\partial \varphi}{\p \eta}\Big|_{\xi=\xi_e}\Big)\\
    = &12\zeta_0 \gamma_e^{-1}\Big(\frac{\partial \varphi^{(1,0)}}{\p \xi}\Big|_{\xi=\xi_e}+\frac{\partial \varphi^{(1,1)}}{\p \xi}\Big|_{\xi=\xi_e}\Big)+12\zeta_0\big(\gamma_{e,0}^{-1}+ \gamma_{e,1}^{-1}\big) \gamma_e^{-1}\Big( g\frac{\partial^2 \varphi}{\p \xi^2}\Big|_{\xi=\xi_e}- \frac{\d g}{\d \eta}\frac{\partial \varphi}{\p \eta}\Big|_{\xi=\xi_e}\Big)\\
    =&12\zeta_0 \gamma_e^{-1}\Big(\frac{\partial \varphi^{(1,0)}}{\p \xi}\Big|_{\xi=\xi_e}+ \gamma_{e,0}^{-1}\Big( g\frac{\partial^2 \varphi}{\p \xi^2}\Big|_{\xi=\xi_e}- \frac{\d g}{\d \eta}\frac{\partial \varphi}{\p \eta}\Big|_{\xi=\xi_e}\Big)\Big)\\
    &+12\zeta_0 \gamma_e^{-1}\Big(\frac{\partial \varphi^{(1,1)}}{\p \xi}\Big|_{\xi=\xi_e}+\gamma_{e,1}^{-1}\Big( g\frac{\partial^2 \varphi}{\p \xi^2}\Big|_{\xi=\xi_e}- \frac{\d g}{\d \eta}\frac{\partial \varphi}{\p \eta}\Big|_{\xi=\xi_e}\Big)\Big)\\
    =&C_0+C_1,
\end{align*}}
where we used $\frac{\partial^2  p}{\partial \nu^2_{\Omega}}\big|_{-} = \frac{\partial^2  p}{\partial \nu^2_{\Omega}}\big|_{+}$ on $\p \Omega$, which is observed in the analytic solution \eqref{ellipse-p-dir-x}.

For a nonnegative integer $m$ and $a=i, e$, it is proven in \cite{Chung2014, Ando2016} that
\begin{align}\label{S-ellipse-cos}
\mathcal{S}_{\Gamma}[\beta_m^{c,a} ](x) = \begin{cases}
\ds -\frac{\cosh (m\xi)}{m\e^{m\xi_a}} \cos (m\eta),\quad & \xi  < \xi_a,\ms\\
\ds -\frac{\cosh (m\xi_a)}{m\e^{m\xi}}\cos (m\eta),\quad & \xi  > \xi_a,
\end{cases}
\end{align}
and
\begin{align}
\label{S-ellipse-sin}
\mathcal{S}_{\Gamma}[ \beta_m^{s,a}](x) = \begin{cases}
\ds -\frac{\sinh (m\xi)}{m\e^{m\xi_a}}\sin (m\eta),\quad & \xi  < \xi_a,\ms\\
\ds -\frac{\sinh (m\xi_a)}{m\e^{m\xi}}\sin (m\eta),\quad & \xi  > \xi_a.
\end{cases}
\end{align}
Moreover, we have
\begin{align}\label{K*-ellipse}
\mathcal{K}^*_\Gamma [\beta_m^{c,a}] = \frac{1}{2 e^{2m\xi_a}}\beta_m^{c,a} \quad \mbox{and} \quad \mathcal{K}^*_\Gamma [\beta_m^{s,a}] = -\frac{1}{2\e^{2m\xi_a}}\beta_m^{s,a}.
\end{align}

We also get
\begin{align}\label{D-ellipse-cos}
\mathcal{D}_{\Gamma}[\cos (m\eta) ](x) = \begin{cases}
\ds \frac{\sinh (m\xi)}{m\e^{m\xi_a}} \cos (m\eta),\quad & \xi  < \xi_a,\ms\\
\ds -\frac{\cosh (m\xi_a)}{m\e^{m\xi}}\cos (m\eta),\quad & \xi  > \xi_a,
\end{cases}
\end{align}
and
\begin{align}
\label{D-ellipse-sin}
\mathcal{D}_{\Gamma}[\sin (m\eta)](x) = \begin{cases}
\ds \frac{\cosh (m\xi)}{m\e^{m\xi_a}} \sin (m\eta),\quad & \xi  < \xi_a,\ms\\
\ds -\frac{\sinh (m\xi_a)}{m\e^{m\xi}} \sin (m\eta),\quad & \xi  > \xi_a.
\end{cases}
\end{align}
Moreover, we also have
\begin{align}\label{K-ellipse}
\mathcal{K}_\Gamma [ \cos (m\eta) ] = -\frac{1}{2 \e^{2m\xi_a}}\cos (m\eta) \quad \mbox{and} \quad \mathcal{K}_\Gamma [\sin (m\eta) ] = \frac{1}{2 \e^{2m\xi_a}}\sin (m\eta) .
\end{align}

With the above preparation, we now present the proof of Theorem \ref{thm-near-cloaking-ellipse} by calculating the explicit form of the solution to \eqref{first-order-equation-leader} using the representation formulas in Remark \ref{rem-representation-formula}.
\renewcommand{\proofname}{\indent Proof of  Theorem \ref{thm-near-cloaking-ellipse}}
\begin{proof}
Let $H(x)=\cosh (n\xi) \cos (n\eta)$ and $P = 12 \cosh (n\xi) \cos (n\eta)$  for $n\geq 1$.  We have already known in \cite{Liu2023}
\begin{equation}\label{ellipse-varphi}
  \varphi =\big(\cosh (n\xi) + \e^{n\xi_i} \sinh (n\xi_i) \;\e^{-n\xi}\big)\cos (n\eta),
\end{equation}
and
{\small
 \begin{align}\label{ellipse-p-dir-x}
 p=
  \begin{cases}
\ds -\frac{12}{\e^{n\xi_e}}\Big(\big(\sinh (n\xi_e) - \e^{n(\xi_i-\xi_e)}\sinh (n\xi_i)\big)\zeta_0 -\e^{n\xi_e}\Big)\Big(\cosh (n\xi) + \e^{n\xi_i} \sinh (n\xi_i) \, \e^{-n\xi}\Big)\cos (n\eta),\ \xi_i<\xi<\xi_e,
\vspace{1em}\\
\ds \Big( 12 \cosh (n\xi) - 12\Big(\zeta_0\big(\sinh (n\xi_e) - \e^{n(\xi_i-\xi_e)}\sinh (n\xi_i) \big)\cosh (n(\xi_e -\xi_i) )- \sinh (n\xi_i) \Big)\frac{\e^{n\xi_i}}{\e^{n\xi}}\Big)\cos (n\eta),\ \xi>\xi_e.
\end{cases}
 \end{align}}

We now solve the equations \eqref{first-order-equation-leader}. By the layer potential in Remark \ref{rem-representation-formula}, the solution $\varphi^{(1,0)}$ to \eqref{first-order-equation-leader} can be the following representation formula
\begin{equation}\label{ellipse-varphi1-leader}
\varphi^{(1,0)}=
\mathcal{S}_{D}[\phi_{1,0}](x),\quad x\in\mathbb{R}^2\setminus\overline{D},
\end{equation}
where
 \begin{align}\label{eq-density-ellipse-phi}
\Big(\frac{1}{2} I+\mathcal{K}^*_{D}\Big)[\phi_{1,0}]=E_0 \quad \mbox{on } \p D.
\end{align}
And the solution $p^{(1,0)}$ to \eqref{first-order-equation-leader} can be the following representation formula
\begin{equation}\label{ellipse-p1-leader}
p^{(1,0)}=
\mathcal{S}_{D}[\psi_{1,0}](x) + \mathcal{S}_{\Omega}[\psi_{2,0}](x) + \mathcal{D}_{\Omega}[\psi_{3,0}](x),\quad x\in\mathbb{R}^2\setminus\overline{D},
\end{equation}
where
\begin{align}
\begin{cases}\label{density-equation-v2-leading}
  \ds \Big(\frac{1}{2} I+\mathcal{K}^*_{D}\Big)[\psi_{1,0}] = -\frac{\p \mathcal{S}_{\Omega}[\psi_{2,0}] }{\p \nu_D} -\frac{\p \mathcal{D}_{\Omega}[\psi_{3,0}] }{\p \nu_D} + A_0 &\quad \mbox{on } \p D, \\
  \ds \psi_{2,0} = C_0  &\quad \mbox{on } \p \Omega, \\
  \ds \psi_{3,0} = - B_0  &\quad \mbox{on } \p \Omega.
\end{cases}
\end{align}

To determine  $\varphi^{(1,0)}$, we first write $f$ as a Fourier series expansion
\begin{align}\label{f-series-ellipse}
f(\eta)=\frac{a_0}{2}+\sum_{m=1}^\infty a_m\cos(m\eta) + b_m\sin(m\eta).
\end{align}
We then have
\begin{align*}
 E_0=&\gamma_{i,0}^{-1}\gamma_i^{-1}\Big(\frac{\d f}{\d \eta} \frac{\p \varphi}{\p \eta}\Big|_{\xi=\xi_i}- f(\eta)\frac{\p^2 \varphi}{\p \xi^2}\Big|_{\xi=\xi_i} \Big)\\
  =& -c_{i,0}\frac{n}{2}\e^{n\xi_i}\sum_{m=n}^{\infty}m\big((a_{m-n}-a_{m+n})\beta_m^{c, i} + (b_{m-n}-b_{m+n}) \beta_m^{s, i}\big).
\end{align*}
Solving \eqref{eq-density-ellipse-phi} by \eqref{K*-ellipse}, we get
\begin{align*}
  \phi_{1, 0} = -\frac{n}{2}\e^{n\xi_i}c_{i,0}\sum_{m=n}^{\infty}&(a_{m-n}-a_{m+n})\frac{m \e^{m\xi_i}}{\cosh(m\xi_i)}\beta_m^{c, i} +(b_{m-n}-b_{m+n}) \frac{m \e^{m\xi_i}}{\sinh(m\xi_i)}\beta_m^{s, i}.
\end{align*}
Hence using \eqref{S-ellipse-cos}, \eqref{S-ellipse-sin} and \eqref{ellipse-varphi1-leader} we obtain
\begin{align*}
  \varphi^{(1, 0)} = \frac{n}{2}c_{i,0} \sum_{m=n}^{\infty}\e^{(m+n)\xi_i}\e^{-m\xi}\big(&(a_{m-n}-a_{m+n})\cos(m\eta) + (b_{m-n}-b_{m+n})\sin(m\eta) \big).
\end{align*}

To proceed, we write $g$ as a Fourier series expansion
\begin{align}\label{g-series-ellipse}
g(\eta)=\frac{d_0}{2}+\sum_{m=0}^\infty d_m\cos(m\eta) + h_m\sin(m\eta),
\end{align}
where $d_m$ and $h_m$ are Fourier coefficients defined similarly to that of $f$. Then from  \eqref{ellipse-varphi}, \eqref{ellipse-p-dir-x}, \eqref{f-series-ellipse} and \eqref{g-series-ellipse}, $A_0$ and $B_0$ are given by the following expressions
{\small
\begin{align*}
  A_0  =&\gamma_{i,0}^{-1}\gamma_i^{-1}\Big(\frac{\d f}{\d \eta} \frac{\p p}{\p \eta}\Big|_{\xi=\xi_i}- f(\eta)\frac{\p^2 p}{\p \xi^2}\Big|_{\xi=\xi_i} \Big)\\
      = &\frac{6n\e^{n\xi_i}}{\e^{n\xi_e}}\Big(\big(\sinh (n\xi_e) - \e^{n(\xi_i-\xi_e)}\sinh (n\xi_i)\big)\zeta_0 -\e^{n\xi_e}\Big)c_{i,0}
       \sum_{m=n}^{\infty} m \big((a_{m-n}-a_{m+n})\beta_m^{c, i}+(b_{m-n}-b_{m+n})\beta_m^{s, i} \big),\\
  B_0 =& -12\zeta_0 \gamma_{e,0}^{-1} g\frac{\p \varphi}{\p \xi}\Big|_{\xi=\xi_e}\\
     = &   - 6 n \zeta_0\big(\sinh (n\xi_e) - \e^{n(\xi_i - \xi_e)}\sinh(n\xi_i)\big)c_{e,0}
      \sum_{m=n}^{\infty}(d_{m-n}+d_{m+n})\cos(m \eta) +(h_{m-n}+h_{m+n})\sin(m \eta),
\end{align*}}
and
\begin{align*}
  C_0 =& 12\zeta_0 \gamma_e^{-1}\Big(\frac{\partial \varphi^{(1,0)}}{\p \xi}\Big|_{\xi=\xi_e}+ \gamma_{e,0}^{-1}\Big( g(\eta)\frac{\partial^2 \varphi}{\p \xi^2}\Big|_{\xi=\xi_e}- \frac{\d g}{\d \eta}\frac{\partial \varphi}{\p \eta}\Big|_{\xi=\xi_e}\Big)\Big)\\
  =& - 6n\zeta_0 c_{i,0}\sum_{m=n}^{\infty}m \e^{(m+n)\xi_i-m\xi_e}\big((a_{m-n}-a_{m+n})\beta_m^{c, e}+(b_{m-n}-b_{m+n})\beta_m^{s, e} \big)\\
  & + 6n\zeta_0\big(\cosh (n\xi_e) + \e^{n(\xi_i-\xi_e)} \sinh (n\xi_i)\big)c_{e,0}\sum_{m=n}^{\infty} m \big((d_{m-n}-d_{m+n})\beta_m^{c, e}
  +(h_{m-n}-h_{m+n})\beta_m^{s, e}\big).
\end{align*}

To find the density $\psi_{1,0}$ by \eqref{density-equation-v2-leading}, we need to know the following layer potentials. For $x\in \Omega\setminus\overline{D}$, from \eqref{S-ellipse-cos}, \eqref{S-ellipse-sin}, \eqref{D-ellipse-cos} and \eqref{D-ellipse-sin}, we have
\begin{align*}
\mathcal{S}_{\Omega}[\psi_{2,0}](x)
  &=6n\zeta_0\e^{n\xi_i}c_{i,0}\\
  &\times\sum_{m=n}^{\infty} \e^{m(\xi_i- 2 \xi_e)}\big((a_{m-n}-a_{m+n})\cosh(m\xi)\cos(m \eta) +(b_{m-n}-b_{m+n})\sinh(m\xi)\sin(m \eta)\big)\\
  &- 6n\zeta_0\big(\cosh (n\xi_e) + \e^{n(\xi_i-\xi_e)} \sinh (n\xi_i)\big)c_{e,0} \\
  &\times\sum_{m=n}^{\infty} \e^{-m\xi_e} \big((d_{m-n}-d_{m+n})\cosh(m\xi)\cos(m \eta)+(h_{m-n}-h_{m+n})\sinh(m\xi)\sin(m \eta)\big),
\end{align*}
and
\begin{align*}
  \mathcal{D}_{\Omega}[\psi_{3,0}](x)
     = & 6n \zeta_0\big(\sinh (n\xi_e) - \e^{n(\xi_i - \xi_e)}\sinh(n\xi_i)\big)c_{e,0}\\
     &\times\sum_{m=n}^{\infty}  \e^{-m\xi_e} \big((d_{m-n}+d_{m+n})\sinh(m\xi) \cos(m \eta)+(h_{m-n}+h_{m+n})\cosh(m\xi)\sin(m \eta)\big).
\end{align*}
Their normal derivatives on the boundary $\p D$ are given by
\begin{align*}
    \frac{\p \mathcal{S}_{\Omega}[\psi_{2,0}]}{\p \nu_D}
  =&6n\zeta_0\e^{n\xi_i}c_{i,0}\\
  &\times\sum_{m=n}^{\infty} m \e^{m(\xi_i- 2 \xi_e)}\big((a_{m-n}-a_{m+n})\sinh(m\xi_i)\beta_m^{c, i}+(b_{m-n}-b_{m+n})\cosh(m\xi_i)\beta_m^{s, i}\big)\\
  &- 6n\zeta_0\big(\cosh (n\xi_e) + \e^{n(\xi_i-\xi_e)} \sinh (n\xi_i)\big)c_{e,0}\\
  &\times\sum_{m=n}^{\infty}m \e^{-m\xi_e} \big((d_{m-n}-d_{m+n})\sinh(m\xi_i)\beta_m^{c, i}+ (h_{m-n}-h_{m+n})\cosh(m\xi_i)\beta_m^{s, i}\big),
\end{align*}
and
\begin{align*}
 \frac{\mathcal{\p D}_{\Omega}[\psi_{3,0}]}{\p \nu_D}
     =&6 n \zeta_0\big(\sinh (n\xi_e) - \e^{n(\xi_i - \xi_e)}\sinh(n\xi_i)\big)c_{e,0}\\
     &\times\sum_{m=n}^{\infty} m \e^{-m\xi_e} \big((d_{m-n}+d_{m+n})\cosh(m\xi_i)\beta_m^{c, i}+(h_{m-n}+h_{m+n})\sinh(m\xi_i)\beta_m^{s, i}\big).
\end{align*}
Solving \eqref{density-equation-v2-leading} by \eqref{K*-ellipse} and \eqref{K-ellipse}, we obtain
\begin{align*}
  \psi_{1,0}
  =& -6n\zeta_0\e^{n\xi_i} c_{i,0}\\
  &\times\sum_{m=n}^{\infty} m \e^{m(2\xi_i- 2 \xi_e)}\big((a_{m-n}-a_{m+n})\tanh(m\xi_i)\beta_m^{c, i} +(b_{m-n}-b_{m+n})\coth(m\xi_i)\beta_m^{s, i})\big)\\
  &+ 6n\zeta_0\big(\cosh (n\xi_e) + \e^{n(\xi_i-\xi_e)} \sinh (n\xi_i)\big)c_{e,0}\\
  &\times\sum_{m=n}^{\infty}m \e^{m(\xi_i-\xi_e)} \big((d_{m-n}-d_{m+n})\tanh(m\xi_i)\beta_m^{c, i}+(h_{m-n}-h_{m+n})\coth(m\xi_i)\beta_m^{s, i}\big) \\
  &-6 n \zeta_0\big(\sinh (n\xi_e) - \e^{n(\xi_i - \xi_e)}\sinh(n\xi_i)\big)c_{e,0}\\
  &\times\sum_{m=n}^{\infty} m \e^{m(\xi_i-\xi_e)} \big((d_{m-n}+d_{m+n})\tanh(m\xi_i)\beta_m^{c, i}+(h_{m-n}+h_{m+n})\coth(m\xi_i)\beta_m^{s, i}\big)\\
  &+\frac{6n}{\e^{n\xi_e}}\Big(\big(\sinh (n\xi_e) - \e^{n(\xi_i-\xi_e)}\sinh (n\xi_i)\big)\zeta_0 -\e^{n\xi_e}\Big)\e^{n\xi_i}c_{i,0}\\
  &\times\sum_{m=n}^{\infty}  \big((a_{m-n}-a_{m+n})\frac{m\e^{m\xi_i}}{\cosh(m\xi_i)}\beta_m^{c,i}+(b_{m-n}-b_{m+n})\frac{m\e^{m\xi_i}}{\sinh(m\xi_i)}\beta_m^{s, i}\big).
\end{align*}
To find the $p^{(1,0)}$ in the $\R^2\setminus \overline{\Omega}$, we need to compute the following layer potentials. For $x\in \R^2\setminus \overline{\Omega}$, from \eqref{S-ellipse-cos}, \eqref{S-ellipse-sin}, \eqref{D-ellipse-cos} and \eqref{D-ellipse-sin}, we have
{\small
\begin{align*}
  \mathcal{S}_{D}[\psi_{1,0}](x)
    &=  6n\zeta_0\e^{n\xi_i} c_{i,0}\\
    &\times\sum_{m=n}^{\infty}\e^{m(2\xi_i- 2 \xi_e-\xi)}\big((a_{m-n}-a_{m+n})\sinh(m\xi_i)\cos(m \eta)+(b_{m-n}-b_{m+n})\cosh(m\xi_i)\sin(m \eta)\big)\\
    &-6n\zeta_0\big(\cosh (n\xi_e) + \e^{n(\xi_i-\xi_e)} \sinh (n\xi_i)\big)c_{e,0}\\
  &\times\sum_{m=n}^{\infty} \e^{m(\xi_i-\xi_e-\xi)} \big((d_{m-n}-d_{m+n})\sinh(m\xi_i)\cos(m \eta) +(h_{m-n}-h_{m+n})\cosh(m\xi_i)\sin(m \eta)\big) \\
  &+ 6 n \zeta_0\big(\sinh (n\xi_e) - \e^{n(\xi_i - \xi_e)}\sinh(n\xi_i)\big)c_{e,0}\\
  &\times\sum_{m=n}^{\infty} \e^{m(\xi_i-\xi_e-\xi)} \big((d_{m-n}+d_{m+n})\cosh(m\xi_i) \cos(m \eta)+(h_{m-n}+h_{m+n})\sinh(m\xi_i)\sin(m \eta)\big)\\
  &-\frac{6n}{\e^{n\xi_e}}\Big(\big(\sinh (n\xi_e) - \e^{n(\xi_i-\xi_e)}\sinh (n\xi_i)\big)\zeta_0 -\e^{n\xi_e}\Big)\e^{n\xi_i}c_{i,0}\\
  &\times\sum_{m=n}^{\infty} \e^{m\xi_i-m\xi}  \big((a_{m-n}-a_{m+n})\cos(m \eta)+(b_{m-n}-b_{m+n})\sin(m \eta)\big),
\end{align*}}
\begin{align*}
\mathcal{S}_{\Omega}[\psi_{2,0}](x)
  &= 6n\zeta_0\e^{n\xi_i}c_{i,0}\\
  &\times\sum_{m=n}^{\infty} \e^{m(\xi_i-  \xi_e-\xi )}\big((a_{m-n}-a_{m+n})\cosh(m\xi_e)\cos(m \eta)+(b_{m-n}-b_{m+n})\sinh(m\xi_e)\sin(m \eta)\big)\\
  &-6n\zeta_0\big(\cosh (n\xi_e) + \e^{n(\xi_i-\xi_e)} \sinh (n\xi_i)\big)c_{e,0}\\
  &\times\sum_{m=n}^{\infty}\e^{-m\xi} \big((d_{m-n}-d_{m+n})\cosh(m\xi_e)\cos(m \eta) +(h_{m-n}-h_{m+n})\sinh(m\xi_e)\sin(m \eta)\big),
\end{align*}
\begin{align*}
  \mathcal{D}_{\Omega}[\psi_{3,0}](x)
     =&  -6 n \zeta_0\big(\sinh (n\xi_e) - \e^{n(\xi_i - \xi_e)}\sinh(n\xi_i)\big)c_{e,0}\\
    &\times\sum_{m=n}^{\infty}  \e^{-m\xi} \big((d_{m-n}+d_{m+n})\cosh(m\xi_e) \cos(m \eta)+(h_{m-n}+h_{m+n})\sinh(m\xi_e)\sin(m \eta)\big).
\end{align*}
Therefore by \eqref{ellipse-p1-leader} we have
\begin{align*}
p^{(1,0)} = \sum_{m=n}^{\infty} \e^{-m\xi}\big(M_{m,n}^{1}\cos(m\eta)+ M_{m,n}^{2}\sin(m\eta)\big),\quad \mbox{in }\R^2\setminus \overline{\Omega},
\end{align*}
where
\begin{align}
  M_{m,n}^{1} =&6n\zeta_0\Big(\e^{n\xi_i}\big(\e^{m(2\xi_i-2\xi_e)}\sinh(m\xi_i)+\e^{m(\xi_i-\xi_e)}\cosh(m\xi_e) \nonumber\\
   & -\e^{m\xi_i-n\xi_e}\big(\sinh(n\xi_e)-\e^{n(\xi_i-\xi_e)}
  \sinh(n\xi_i)-\zeta_0^{-1}\e^{n\xi_e}\big)\big) c_{i,0}(a_{m-n}-a_{m+n}) \nonumber\\
   &- \big(\cosh (n\xi_e) + \e^{n(\xi_i-\xi_e)} \sinh (n\xi_i)\big)\big(\e^{m\xi_i-m\xi_e}\sinh(m\xi_i)+\cosh(m\xi_e)\big) c_{e,0}(d_{m-n}-d_{m+n})\nonumber\\
   &+\big(\sinh (n\xi_e) - \e^{n(\xi_i - \xi_e)}\sinh(n\xi_i)\big)\big(\e^{m\xi_i-m\xi_e}\cosh(m\xi_i)-\cosh(m\xi_e)\big) c_{e,0}(d_{m-n}+d_{m+n})\Big),\label{ellipse-M-1}\\
   M_{m,n}^{2} =&6n\zeta_0\Big(\e^{n\xi_i}\big(\e^{m(2\xi_i-2\xi_e)}\cosh(m\xi_i)+\e^{m(\xi_i-\xi_e)}\sinh(m\xi_e)\nonumber\\
  &-\e^{m\xi_i-n\xi_e}\big(\sinh(n\xi_e)-\e^{n(\xi_i-\xi_e)}
  \sinh(n\xi_i)-\zeta_0^{-1}\e^{n\xi_e}\big)\big) c_{i,0}(b_{m-n}-b_{m+n})\nonumber\\
   &-\big(\cosh (n\xi_e) + \e^{n(\xi_i-\xi_e)} \sinh (n\xi_i)\big)\big(\e^{m\xi_i-m\xi_e}\cosh(m\xi_i)+\sinh(m\xi_e)\big) c_{e,0}(h_{m-n}-h_{m+n})\nonumber\\
   &+\big(\sinh (n\xi_e) - \e^{n(\xi_i - \xi_e)}\sinh(n\xi_i)\big)\big(\e^{m\xi_i-m\xi_e}\sinh(m\xi_i)-\sinh(m\xi_e)\big) c_{e,0}(h_{m-n}+h_{m+n})\Big), \label{ellipse-M-2}
\end{align}
where $\zeta_0$ satisfies \eqref{ellipse-cloaking-zeta-x}, and $ M_{m,n}^{1}$, $ M_{m,n}^{2}$ are called first-order scattering coefficients.

We are now in a position to find recursive formulas such that $M_{m,n}^{1}=0$ and $M_{m,n}^{2}=0$. The following properties are useful to simplify coefficients $M_{m,n}^{1}$ and $M_{m,n}^{2}$.
\begin{align}\label{properties-sinh-cosh-1}
  \e^{x}\cosh (x) + \e^{y} \sinh (y) = \frac{1}{2}\big(\e^{2x}+\e^{2y}\big),\\
    \e^{x}\cosh (x) - \e^{y} \cosh (y) = \frac{1}{2}\big(\e^{2x}-\e^{2y}\big),\label{properties-sinh-cosh-2}\\
      \e^{x}\sinh (x) - \e^{y} \sinh (y) = \frac{1}{2}\big(\e^{2x}-\e^{2y}\big).\label{properties-sinh-cosh-3}
\end{align}
If  we require $ M_{m,n}^{1}=0$ and  $ M_{m,n}^{2}=0$, then by \eqref{properties-sinh-cosh-1}--\eqref{properties-sinh-cosh-3} the following equality establishes.
\begin{align*}
 &\e^{(m+n)\xi_i+(m-n)\xi_e}\Big(\big(\e^{2n\xi_e} -\e^{2n\xi_i}\big)\frac{e^{2n\xi_e}+1}{e^{2n\xi_i}-1}+\e^{-2(m-n)\xi_e}\big(\e^{2m\xi_i}+\e^{2m\xi_e}\big)\Big)c_{i,0}(a_{m-n}-a_{m+n})\\
&- \big(\e^{2(m+n)\xi_i}+\e^{2(m+n)\xi_e}\big)c_{e,0}d_{m-n} + \big(\e^{2m\xi_i+2n\xi_e}+\e^{2m\xi_e+2n\xi_i}\big)c_{e,0}d_{m+n}=0,
\end{align*}
and
\begin{align*}
 &\e^{(m+n)\xi_i+(m-n)\xi_e}\Big(\big(\e^{2n\xi_e} -\e^{2n\xi_i}\big)\frac{e^{2n\xi_e}+1}{e^{2n\xi_i}-1}+\e^{-2(m-n)\xi_e}\big(\e^{2m\xi_i}+\e^{2m\xi_e}\big)\Big)c_{i,0}(b_{m-n}-b_{m+n})\\
&- \big(\e^{2(m+n)\xi_i}+\e^{2(m+n)\xi_e}\big)c_{e,0}h_{m-n} + \big(\e^{2m\xi_i+2n\xi_e}+\e^{2m\xi_e+2n\xi_i}\big)c_{e,0}h_{m+n}=0,
\end{align*}
where we substitute the expressions for $\cosh(\cdot)$ and $\sinh(\cdot)$ into \eqref{ellipse-M-1} and \eqref{ellipse-M-2}.

Rearranging and changing the subscripts according to $m \rightarrow m + n$ yields
\begin{align}\label{recursive equations-d-1-cos-ellipse}
d_{m}
 =&\frac{\e^{2(m+n)\xi_i+2n\xi_e}+\e^{2(m+n)\xi_e+2n\xi_i} }{\e^{2(m+2n)\xi_i}+\e^{2(m+2n)\xi_e}}d_{m+2n}
 +\frac{\e^{(m+2n)\xi_i+m\xi_e}}{\e^{2(m+2n)\xi_i} + \e^{2(m+2n)\xi_e}} \nonumber\\
 &\times\Big(\big(\e^{2n\xi_e} -\e^{2n\xi_i}\big)\frac{e^{2n\xi_e}+1}{e^{2n\xi_i}-1}+\e^{-2m\xi_e}\big(\e^{2(m+n)\xi_i}+\e^{2(m+n)\xi_e}\big)\Big)\frac{c_{i,0}}{c_{e,0}}(a_{m}-a_{m+2n}),\\
h_{m}
 =&\frac{\e^{2(m+n)\xi_i+2n\xi_e}+\e^{2(m+n)\xi_e+2n\xi_i} }{\e^{2(m+2n)\xi_i}+\e^{2(m+2n)\xi_e}}h_{m+2n}
 +\frac{\e^{(m+2n)\xi_i+m\xi_e}}{\e^{2(m+2n)\xi_i} + \e^{2(m+2n)\xi_e}}\nonumber \\
 &\times\Big(\big(\e^{2n\xi_e} -\e^{2n\xi_i}\big)\frac{e^{2n\xi_e}+1}{e^{2n\xi_i}-1}+\e^{-2m\xi_e}\big(\e^{2(m+n)\xi_i}+\e^{2(m+n)\xi_e}\big)\Big)\frac{c_{i,0}}{c_{e,0}}(b_{m}-b_{m+2n}),\label{recursive equations-h-1-cos-ellipse}
\end{align}
or
\begin{align}\label{recursive equations-d-2-cos-ellipse}
d_{m+2n}
 =&\frac{\e^{2(m+2n)\xi_i}+\e^{2(m+2n)\xi_e}}{\e^{2(m+n)\xi_i+2n\xi_e}+\e^{2(m+n)\xi_e+2n\xi_i}}d_{m}
 -\frac{\e^{(m+2n)\xi_i+m\xi_e}}{\e^{2(m+n)\xi_i+2n\xi_e}+\e^{2(m+n)\xi_e+2n\xi_i}}\nonumber\\
 &\times\Big(\big(\e^{2n\xi_e} -\e^{2n\xi_i}\big)\frac{e^{2n\xi_e}+1}{e^{2n\xi_i}-1}+\e^{-2m\xi_e}\big(\e^{2(m+n)\xi_i}+\e^{2(m+n)\xi_e}\big)\Big)\frac{c_{i,0}}{c_{e,0}}(a_{m}-a_{m+2n}),\\
h_{m+2n}
 =&\frac{\e^{2(m+2n)\xi_i}+\e^{2(m+2n)\xi_e}}{\e^{2(m+n)\xi_i+2n\xi_e}+\e^{2(m+n)\xi_e+2n\xi_i}}h_{m}
 -\frac{\e^{(m+2n)\xi_i+m\xi_e}}{\e^{2(m+n)\xi_i+2n\xi_e}+\e^{2(m+n)\xi_e+2n\xi_i}} \nonumber\\
 &\times\Big(\big(\e^{2n\xi_e} -\e^{2n\xi_i}\big)\frac{e^{2n\xi_e}+1}{e^{2n\xi_i}-1}+\e^{-2m\xi_e}\big(\e^{2(m+n)\xi_i}+\e^{2(m+n)\xi_e}\big)\Big)\frac{c_{i,0}}{c_{e,0}}(b_{m}-b_{m+2n}).\label{recursive equations-h-2-cos-ellipse}
\end{align}

Let $H(x) = \sinh(n\xi)\sin(n\eta)$ and $P(x)=12 \sinh(n\xi)\sin(n\eta)$ for $n\geq 1$. In a similar way, we have
\begin{align}\label{recursive equations-d-1-sin-ellipse}
d_{m}
 =&-\frac{\e^{2(m+n)\xi_i+2n\xi_e}+\e^{2(m+n)\xi_e+2n\xi_i} }{\e^{2(m+2n)\xi_i}+\e^{2(m+2n)\xi_e}}d_{m+2n}
+\frac{\e^{(m+2n)\xi_i+m\xi_e}}{\e^{2(m+2n)\xi_i} + \e^{2(m+2n)\xi_e}}\nonumber\\
&\times\Big(\big(\e^{2n\xi_e} -\e^{2n\xi_i}\big)\frac{e^{2n\xi_e}-1}{e^{2n\xi_i}+1}+\e^{-2m\xi_e}\big(\e^{2(m+n)\xi_i}+\e^{2(m+n)\xi_e}\big)\Big)\frac{c_{i,0}}{c_{e,0}}(a_{m}+a_{m+2n}),\\
h_{m}
 =&-\frac{\e^{2(m+n)\xi_i+2n\xi_e}+\e^{2(m+n)\xi_e+2n\xi_i} }{\e^{2(m+2n)\xi_i}+\e^{2(m+2n)\xi_e}}h_{m+2n}
+\frac{\e^{(m+2n)\xi_i+m\xi_e}}{\e^{2(m+2n)\xi_i} + \e^{2(m+2n)\xi_e}}\nonumber\\
&\times\Big(\big(\e^{2n\xi_e} -\e^{2n\xi_i}\big)\frac{e^{2n\xi_e}-1}{e^{2n\xi_i}+1}+\e^{-2m\xi_e}\big(\e^{2(m+n)\xi_i}+\e^{2(m+n)\xi_e}\big)\Big)\frac{c_{i,0}}{c_{e,0}}(b_{m}+b_{m+2n}),\label{recursive equations-h-1-sin-ellipse}
\end{align}
or
\begin{align}\label{recursive equations-d-2-sin-ellipse}
d_{m+2n}
 =&-\frac{\e^{2(m+2n)\xi_i}+\e^{2(m+2n)\xi_e}}{\e^{2(m+n)\xi_i+2n\xi_e}+\e^{2(m+n)\xi_e+2n\xi_i}}d_{m}
 +\frac{\e^{(m+2n)\xi_i+m\xi_e}}{\e^{2(m+n)\xi_i+2n\xi_e}+\e^{2(m+n)\xi_e+2n\xi_i}}\nonumber\\
 &\times\Big(\big(\e^{2n\xi_e} -\e^{2n\xi_i}\big)\frac{e^{2n\xi_e}-1}{e^{2n\xi_i}+1}+\e^{-2m\xi_e}\big(\e^{2(m+n)\xi_i}+\e^{2(m+n)\xi_e}\big)\Big)\frac{c_{i,0}}{c_{e,0}}(a_{m}+a_{m+2n}),\\
h_{m+2n}
 =&-\frac{\e^{2(m+2n)\xi_i}+\e^{2(m+2n)\xi_e}}{\e^{2(m+n)\xi_i+2n\xi_e}+\e^{2(m+n)\xi_e+2n\xi_i}}h_{m}
 +\frac{\e^{(m+2n)\xi_i+m\xi_e}}{\e^{2(m+n)\xi_i+2n\xi_e}+\e^{2(m+n)\xi_e+2n\xi_i}} \nonumber\\
 &\times\Big(\big(\e^{2n\xi_e} -\e^{2n\xi_i}\big)\frac{e^{2n\xi_e}-1}{e^{2n\xi_i}+1}+\e^{-2m\xi_e}\big(\e^{2(m+n)\xi_i}+\e^{2(m+n)\xi_e}\big)\Big)\frac{c_{i,0}}{c_{e,0}}(b_{m}+b_{m+2n}).\label{recursive equations-h-2-sin-ellipse}
\end{align}

The other setting about $d$ and $h$ is similar to the annulus case, we also set
\begin{align}\label{ellipse-d-h-nmax}
  d_{m_{max}+1}=\cdots=  d_{m_{max}+2n}=  h_{m_{max}+1}= \cdots= h_{m_{max}+2n}=0.
\end{align}
Hence a shape function $g$ can be constructed by $d_m =\{d_1,d_2,\ldots,d_{m_{max}} \}$ and  $h_m =\{h_1,h_2,\ldots,h_{m_{max}} \}$, where $d_m$ and $h_m$ are
determined from (\ref{recursive equations-d-1-cos-ellipse}), (\ref{recursive equations-h-1-cos-ellipse}) (or (\ref{recursive equations-d-1-sin-ellipse}) and (\ref{recursive equations-h-1-sin-ellipse})), (\ref{ellipse-d-h-nmax}).

The proof is complete.
\end{proof}

\section{Numerical simulations}\label{sec-num-sim}
In this section, we validate the theoretical results  by performing two-dimensional finite-element simulations, which shows good agreement. We perform finite-element numerical simulations using the commercial software COMSOL Multiphysics. In what follows, we assume that $\p D_\epsilon$ is a perturbation of a circle (or ellipse) with $r_i = 1$ (or $\xi_i=0.5$), as shown in Figure \ref{fig-perturbation-circle-ellipse}, and  $\p \Omega_\epsilon$ is a perturbation of a circle (or ellipse) with $r_e = 2$ (or $\xi_e=1$).

 \begin{figure}[H]
	\centering  
	\subfigbottomskip=-10pt 
	\subfigcapskip=-10pt 
	\subfigure[]{
		\includegraphics[width=0.32\linewidth]{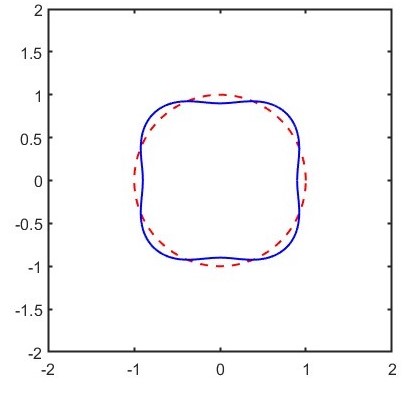}}\qquad
	\subfigure[]{
		\includegraphics[width=0.32\linewidth]{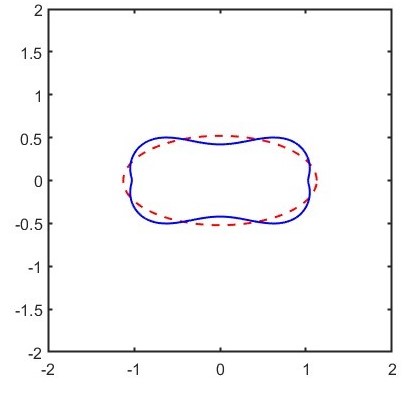}}
	\caption{The solid blue line represents the boundary of the object, which is a perturbation of a circle (or an ellipse), a dashed red line. The perturbation is given by $\epsilon f$, where $\epsilon = 0.1$ and $f=-\cos(4\theta)$. (a) rounded rectangle, (b) bone-shaped object.}\label{fig-perturbation-circle-ellipse}
\end{figure}

To quantify the enhanced effect, we define an evaluation function by
\begin{align}\label{Q}
  Q= \|p_\epsilon-P\|_{L^2(X)},
\end{align}
where $X$ denotes the computed domain. Form \eqref{Q}, we know that $Q=0$ when $\{(D, \Omega; \zeta_0)\}$ is perfect cloaking.  Throughout
this section, we have confirmed numerically the enhanced cloaking effect by comparing the perfect cloaking, $1$-order near-cloaking, and $2$-order near-cloaking. Here the $1$-order near-cloaking corresponds to a small inner boundary perturbation while the outer boundary is not perturbed, that is, $g=0$. The $2$-order near-cloaking corresponds that the inner and outer boundaries are simultaneously perturbed and Fourier coefficients of $g$ satisfy the recursive formulas in Section \ref{sec-hnc}.

We first consider the case of $D$ and $\Omega$ being concentric disks of radii $r_i=1$ and $r_e=2$ with $\zeta_0$ satisfying perfect cloaking. Figure \ref{fig-near-cloaking-n-1} presents a comparison of finite-element simulation results corresponding to perfect cloaking (a,d), $1$-order near-cloaking (b, e), and $2$-order near-cloaking (c, f) under a linear background field. Figures \ref{fig-near-cloaking-n-1}(a)--\ref{fig-near-cloaking-n-1}(c) present the resulting pressure distribution (colormap) and streamlines (white lines), showing excellent cloaking for all three cases. Under cloaking conditions, the streamlines outside of the control region are straight, unmodified relative to the uniform far field, and undisturbed by the object. In Figures \ref{fig-near-cloaking-n-1}(d)--\ref{fig-near-cloaking-n-1}(f) we compare the outer scattered field, showing that $2$-order near-cloaking has smaller scattering relative to $1$-order near-cloaking. This indicates that $2$-order near-cloaking has an enhanced cloaking effect.
To quantify this effect, we compute the evaluation function $Q$ using the equation \eqref{Q}, where $X$ denotes that square region minus $\Omega_\epsilon$. The computed results are summarized in Table \ref{tab-Q-circle}, which presents the comparison of $Q$ for all three cases, clearly indicating that $2$-order near-cloaking has smaller scattering. In addition, we also compare the scattered field on the circle of radius $3$, as shown in Figure \ref{fig-near-cloaking-circle-r-3}(a), showing that the scattering from $2$-order near-cloaking is smaller. The non-linear background field is also considered in Figure \ref{fig-near-cloaking-n-2}, Table \ref{tab-Q-circle}, and  Figure \ref{fig-near-cloaking-circle-r-3}(b), where $n=2$. In summary, these results clearly show that $2$-order near-cloaking has an enhanced cloaking effect relative to $1$-order near-cloaking and validate Theorem \ref{thm-near-cloaking-circle}. The performance of the proposed enhanced near-cloaking conditions has been numerically confirmed.
 \begin{figure}[H]
	\centering  
	\subfigbottomskip=-10pt 
	\subfigcapskip=-10pt 
	\subfigure[]{
		\includegraphics[width=0.32\linewidth]{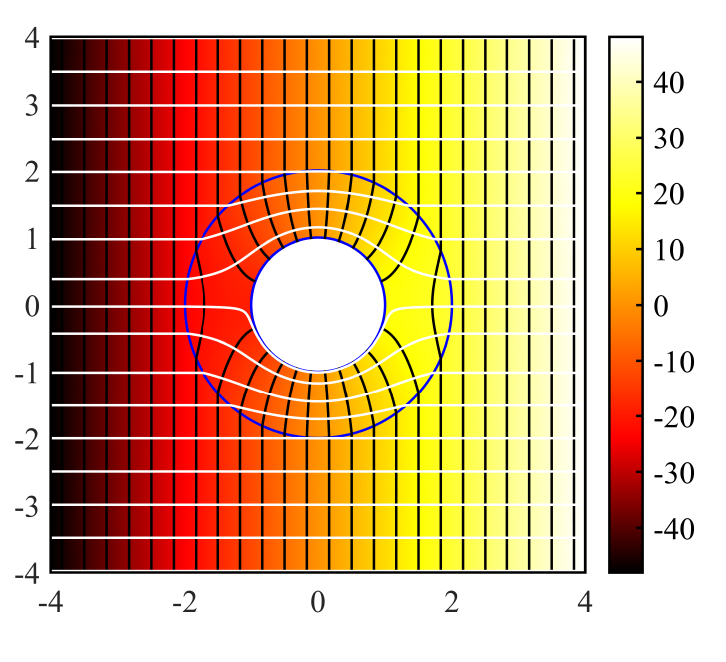}}
	\subfigure[]{
		\includegraphics[width=0.32\linewidth]{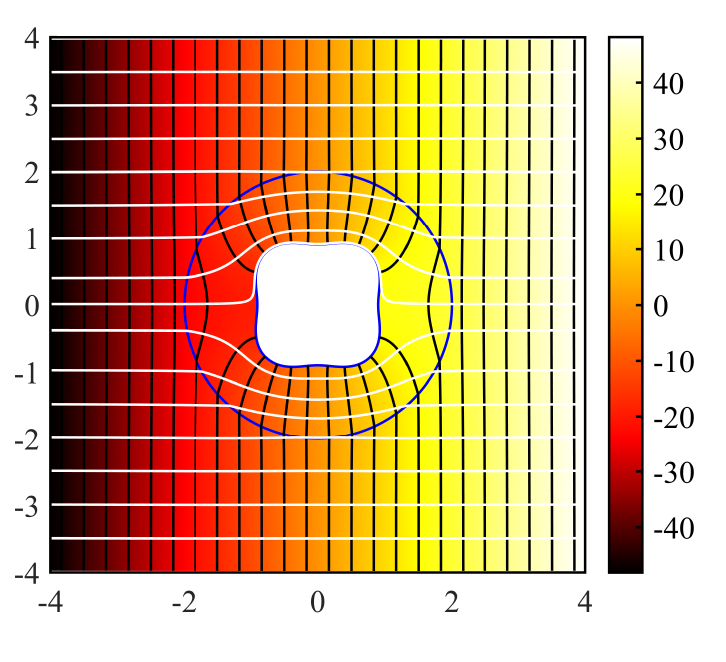}}
	\subfigure[]{
	\includegraphics[width=0.32\linewidth]{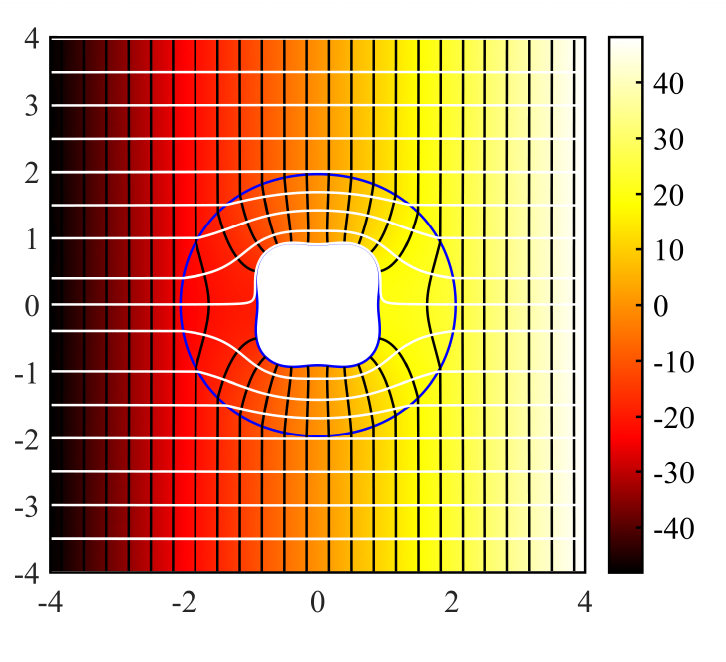}}\\
	\subfigure[]{
		\includegraphics[width=0.32\linewidth]{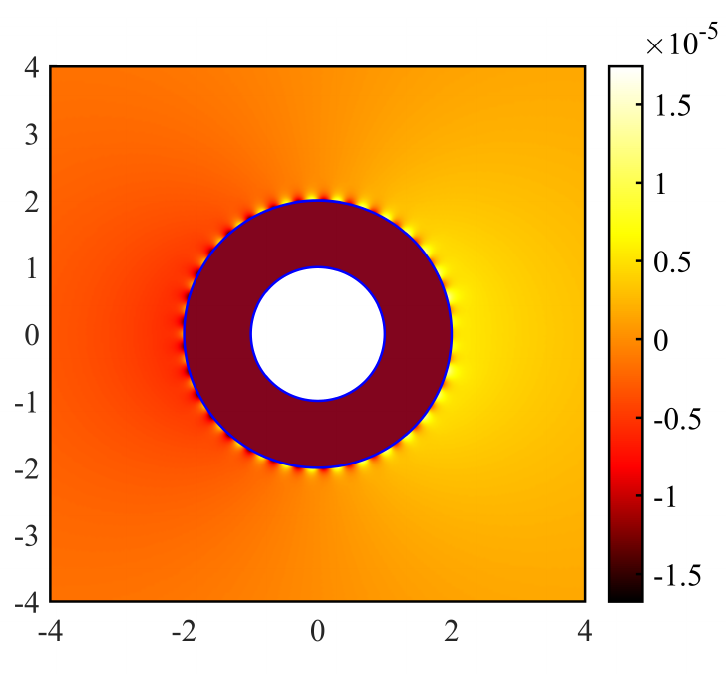}}
	\subfigure[]{
	\includegraphics[width=0.32\linewidth]{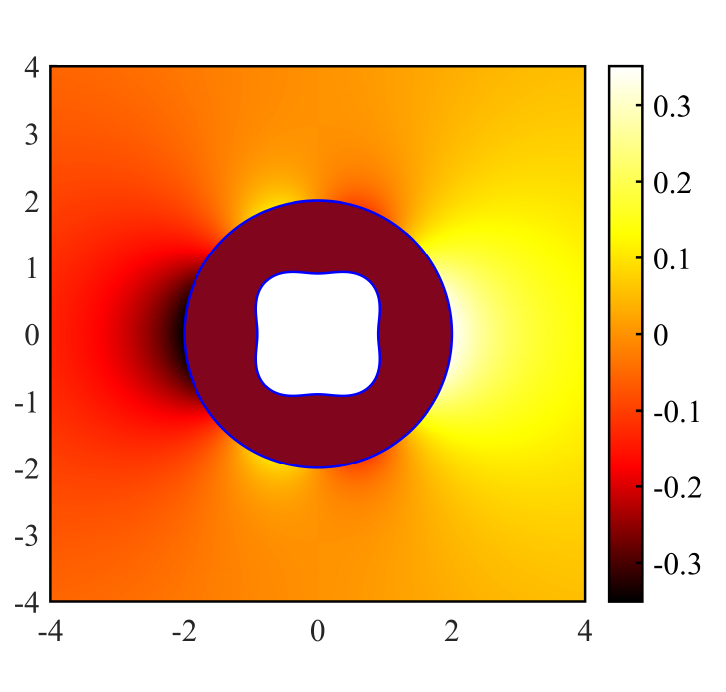}}
	\subfigure[]{
		\includegraphics[width=0.32\linewidth]{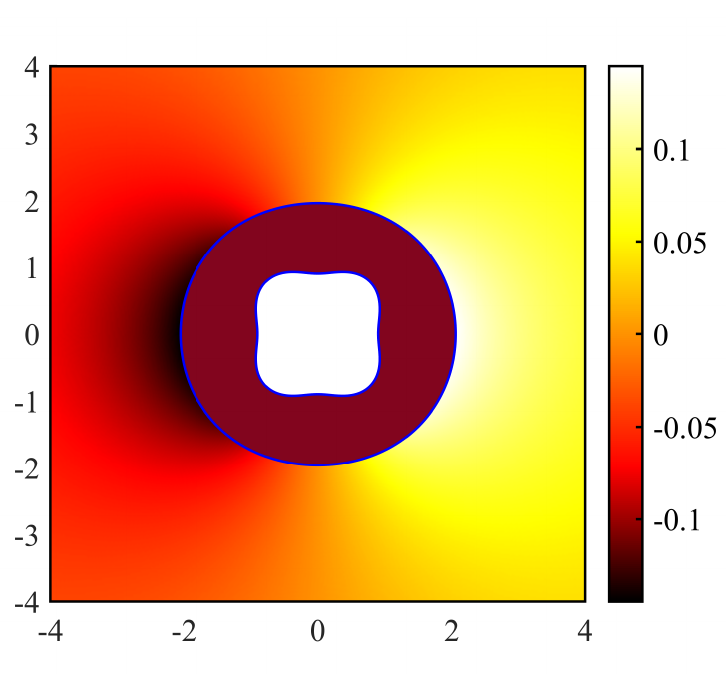}}
	\caption{Numerical results for the pressure distribution (colormap) and streamlines (white lines), corresponding to different order near-cloaking.
Top: outer total field $p_\epsilon$; bottom: outer scattered field $p_\epsilon-P$; left: perfect cloaking; middle: 1-order near-cloaking; right: 2-order near-cloaking. From \eqref{recursive equations-d-1-cos} and \eqref{recursive equations-h-1-cos}, the Fourier coefficients $d_m$ of $g$ are obtained as $d_0=0.2197$, $d_2=0.4669$, $d_4=-0.125$, where $n=1$.}\label{fig-near-cloaking-n-1}
\end{figure}

\begin{table}[!htbp]
  \caption{Evaluation function $Q$ with different cloaking and $n$}\label{tab-Q-circle}
  \centering
  \begin{tabular}{cccc}
    \toprule
    n   & perfect cloaking   &1-order near-cloaking &2-order near-cloaking \\
    \midrule
    1 & 0&  0.782010 & 0.480427\\
    2 & 0& 1.788966& 1.145818\\
    \bottomrule
  \end{tabular}
\end{table}

 \begin{figure}[H]
	\centering  
	\subfigbottomskip=-10pt 
	\subfigcapskip=0pt 
	\subfigure[]{
		\includegraphics[width=0.45\linewidth]{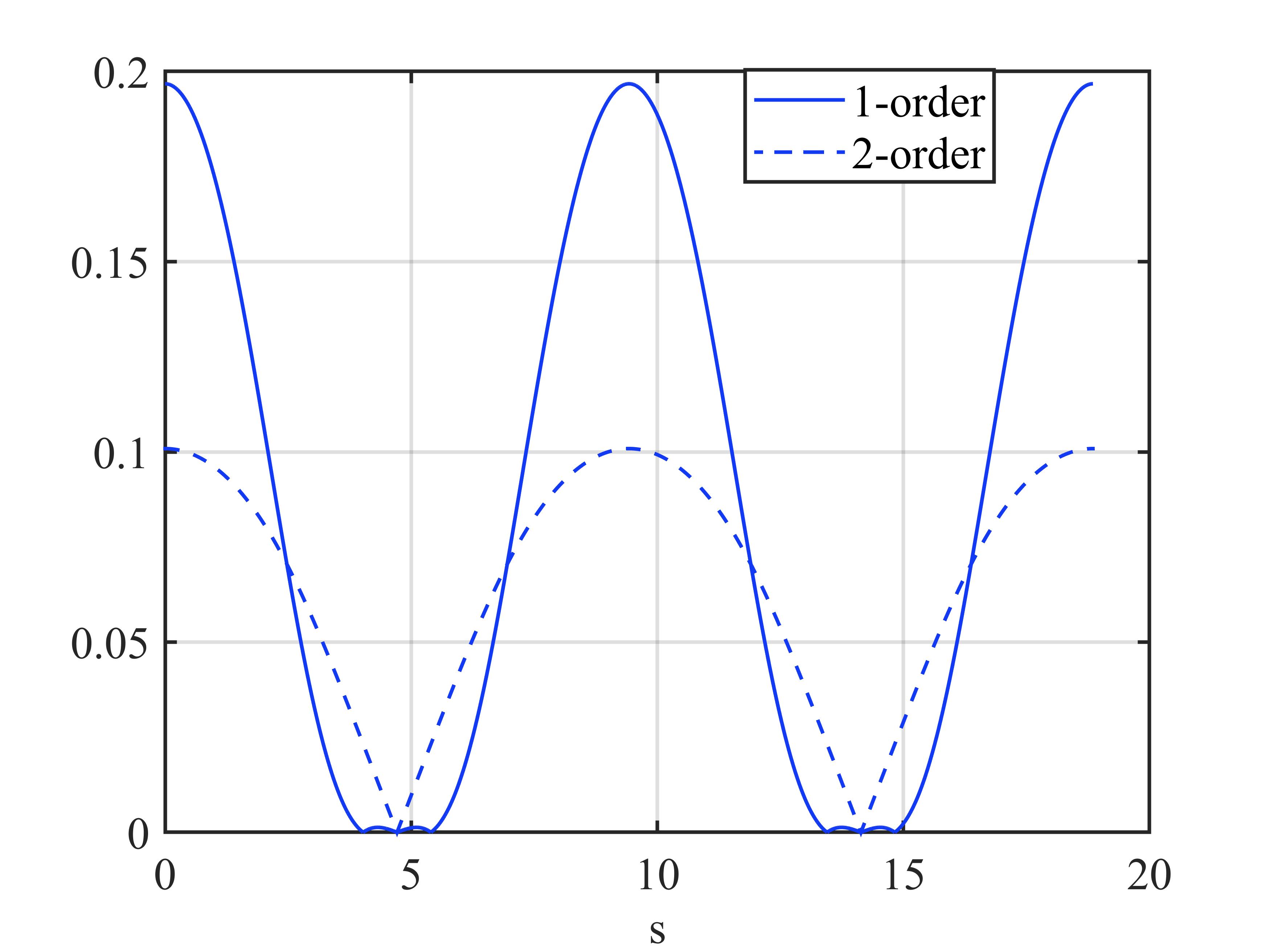}}
	\subfigure[]{
		\includegraphics[width=0.45\linewidth]{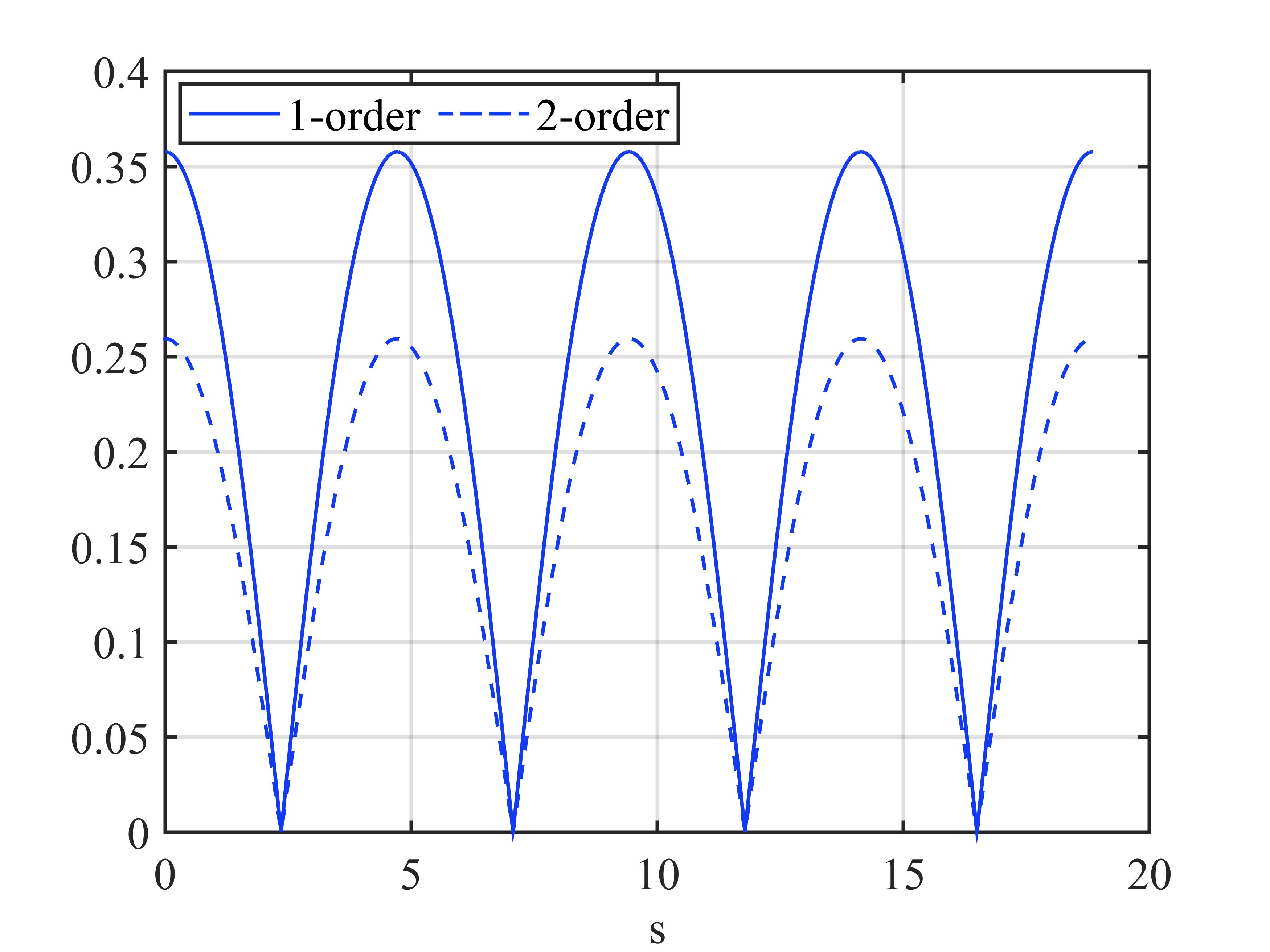}}
	\caption{Absolute value of outer scattered field on the circle of radius 3; left: background field $P=12r\cos(\theta)$; right: background field $P=12r^2\cos(2\theta)$. Here $s$ denotes arc length. }\label{fig-near-cloaking-circle-r-3}
\end{figure}

 \begin{figure}[H]
	\centering  
	\subfigbottomskip=-10pt 
	\subfigcapskip=-10pt 
	\subfigure[]{
		\includegraphics[width=0.32\linewidth]{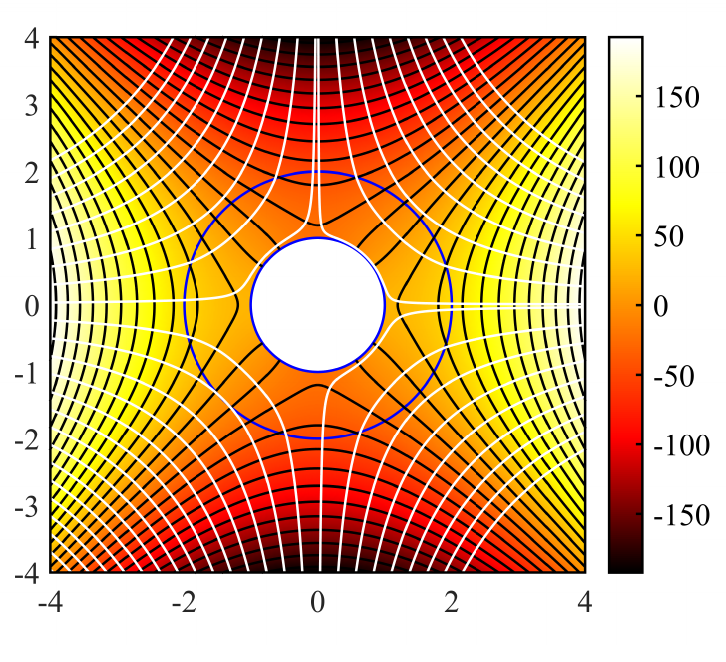}}
	\subfigure[]{
		\includegraphics[width=0.32\linewidth]{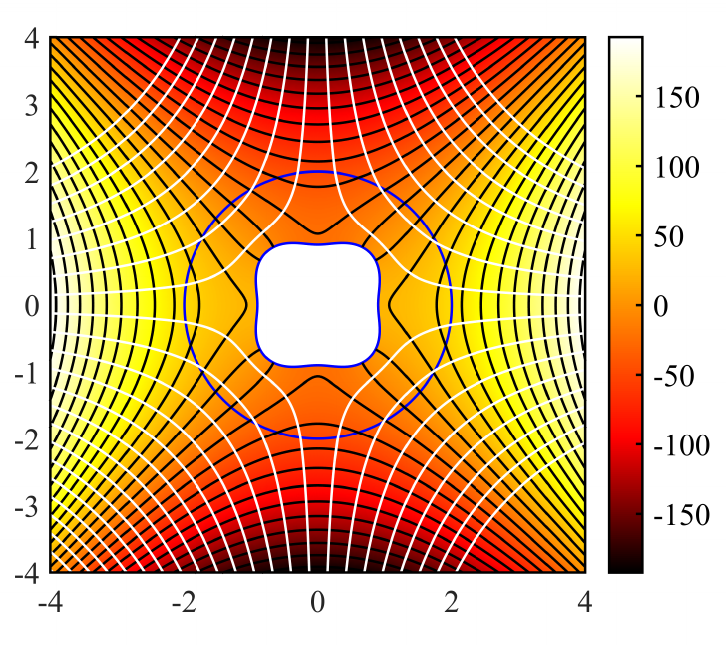}}
	\subfigure[]{
	\includegraphics[width=0.32\linewidth]{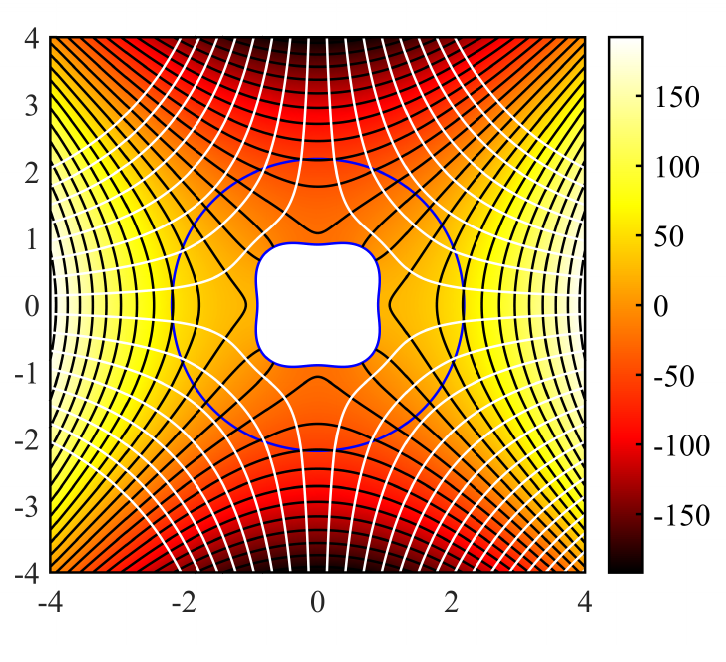}}\\
	\subfigure[]{
		\includegraphics[width=0.32\linewidth]{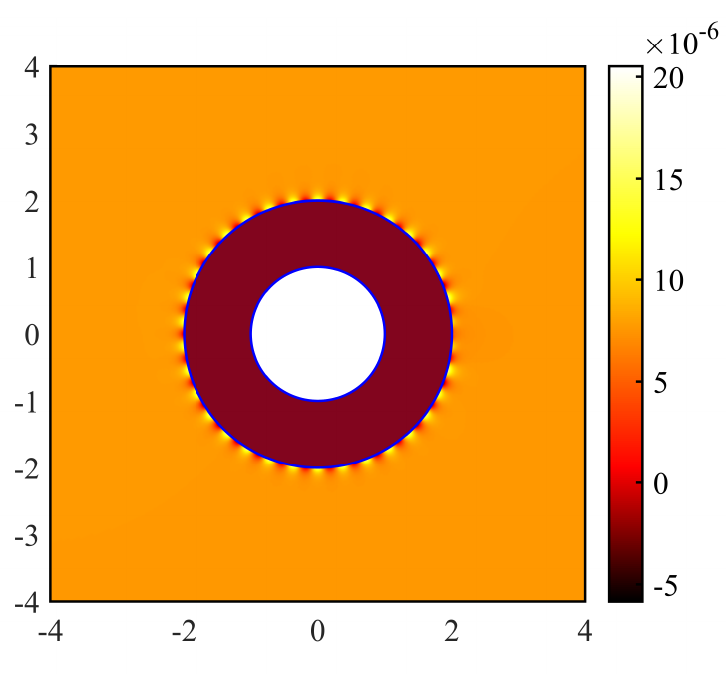}}
	\subfigure[]{
	\includegraphics[width=0.32\linewidth]{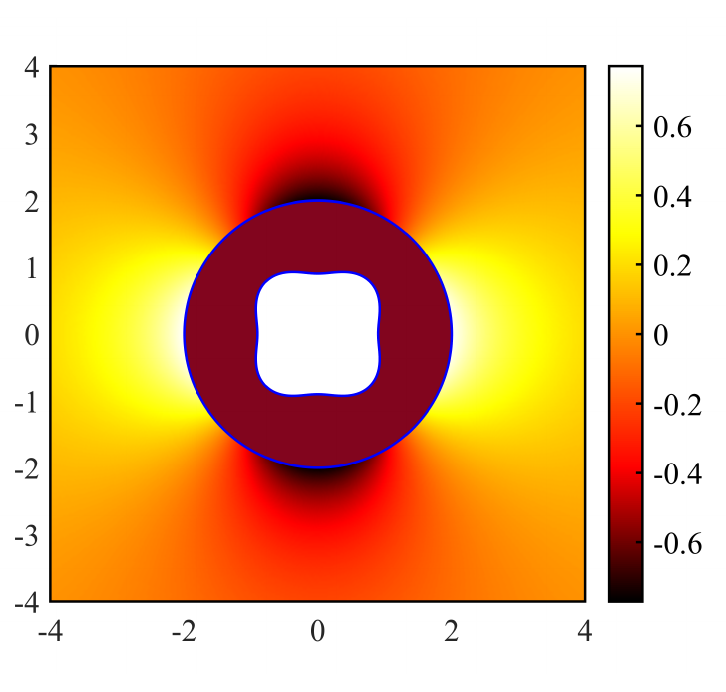}}
	\subfigure[]{
		\includegraphics[width=0.32\linewidth]{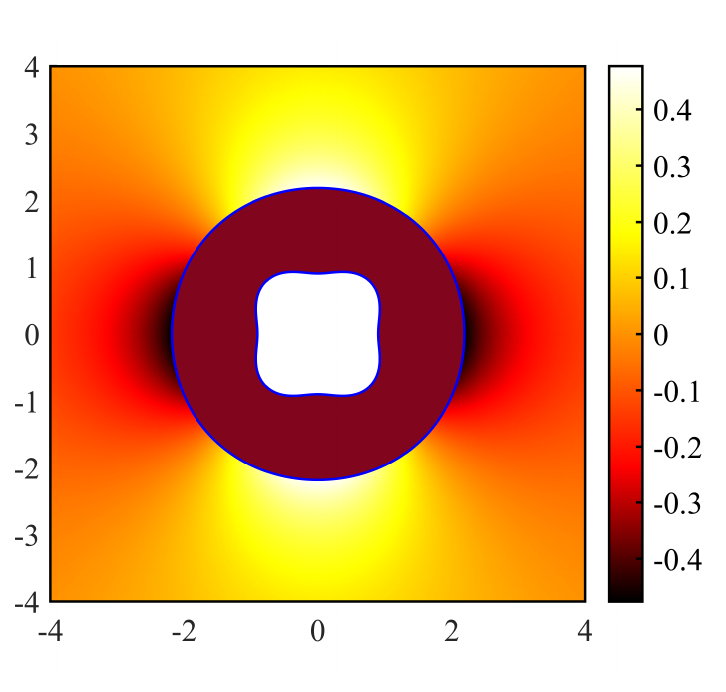}}
	\caption{Top: outer total field $p_\epsilon$; bottom: outer scattered field $p_\epsilon-P$; left: perfect cloaking; middle: 1-order near-cloaking; right: 2-order near-cloaking. From \eqref{recursive equations-d-1-cos} and \eqref{recursive equations-h-1-cos}, the Fourier coefficients $d_m$ of $g$ are obtained as $d_0=1.9922$, $d_4=-0.125$, where $n=2$.}\label{fig-near-cloaking-n-2}
\end{figure}

Extending our analysis to perturbed confocal ellipses, we next consider the case of $\p D$ and $\p \Omega$ being confocal ellipses of elliptic radii $\xi_i=0.5$ and $\xi_e=1$ with $\zeta_0$ satisfying perfect cloaking. From the theory in Section \ref{subsec-ellipse}, it follows that weak 2-order near-cloaking can be achieved. Before showing the numerical results, we compute the Fourier coefficients $c_{a,2m}$ of $\gamma_a^{-1}$, as shown in Table \ref{tab-c-fourier-seriers}. Observing the table, we can find the coefficient of the leading term is greater than that of the other term. This indicates that the method of the leading term-vanishing is reasonable. The following numerical results further demonstrate the method.

\begin{table}[!htbp]
  \caption{Fourier coefficients $c_{a,2m}$ of $\gamma_a^{-1}$ for $a=i, e$, where $\xi_i=0.5$, $\xi_e=1$.}\label{tab-c-fourier-seriers}
  \centering
  \begin{tabular}{ccccccc}
    \toprule
    m   &0   &1 &2 &3 &4 &5 \\
    \midrule
    $c_{i,2m}$& 1.257556&  0.471036& 0.130758& 0.040210&0.012967 & 0.004299\\
    $c_{e,2m}$& 0.739163& 0.100266& 0.010185& 0.001149&0.000136&0.000017 \\
    \bottomrule
  \end{tabular}
\end{table}

Figure \ref{fig-near-cloaking-n-1-ellipse} presents a comparison of finite-element simulation
results under a linear background field, i.e., n=1. Figures \ref{fig-near-cloaking-n-1-ellipse}(a)--\ref{fig-near-cloaking-n-1-ellipse}(c) present the resulting pressure distribution (colormap) and streamlines (white
lines), showing a good cloaking. Comparing the scattered field in Figure \ref{fig-near-cloaking-n-1-ellipse}(e) and Figure \ref{fig-near-cloaking-n-1-ellipse}(f), we can find that the magnitude of the scattered field has decreased drastically. This indicates the enhanced cloaking effect is achieved. Table \ref{tab-Q-ellipse} presents the evaluation function $Q$ for different cloaking, clearly showing that weak $2$-order near-cloaking has smaller scattering. In addition, we also compare the scattered field on the circle of radius $3$, as shown in Figure \ref{fig-near-cloaking-ellipse-r-3}(a), showing that the scattering from weak $2$-order near-cloaking is smaller. The case of a non-linear background field, i.e., $n=2$, is shown in Figure \ref{fig-near-cloaking-n-2-ellipse}, Table \ref{tab-Q-ellipse} and \ref{fig-near-cloaking-ellipse-r-3}(b). Analogously, the enhanced cloaking effect is also achieved.
These results present appreciable improvement that can be realized by controlling the relation of the shape functions at the inner and outer boundaries. Moreover, they also show excellent agreement like the perturbed circular cylinder case, and validate Theorem \ref{thm-near-cloaking-ellipse}. Hence the performance of the proposed enhanced near-cloaking conditions has been numerically confirmed.
 \begin{figure}[!htbp]
	\centering  
	\subfigbottomskip=-10pt 
	\subfigcapskip=-10pt 
	\subfigure[]{
		\includegraphics[width=0.32\linewidth]{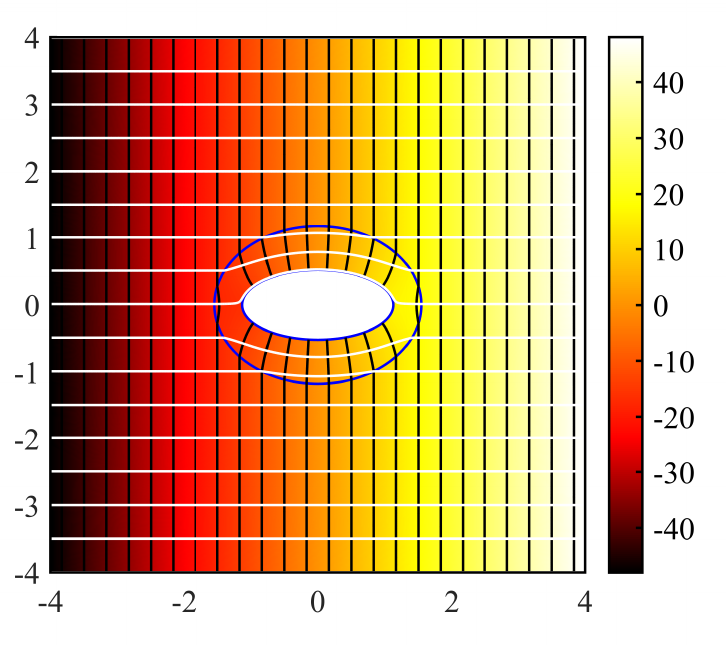}}
	\subfigure[]{
		\includegraphics[width=0.32\linewidth]{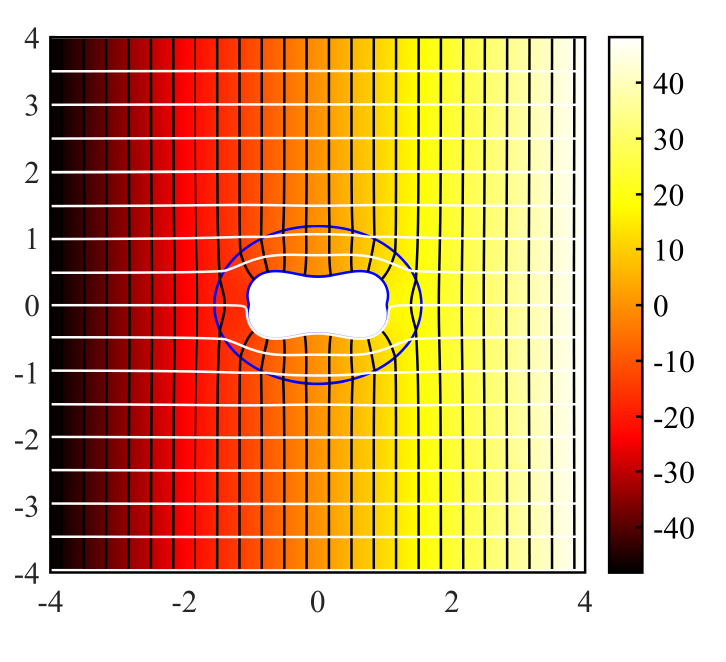}}
	\subfigure[]{
	\includegraphics[width=0.32\linewidth]{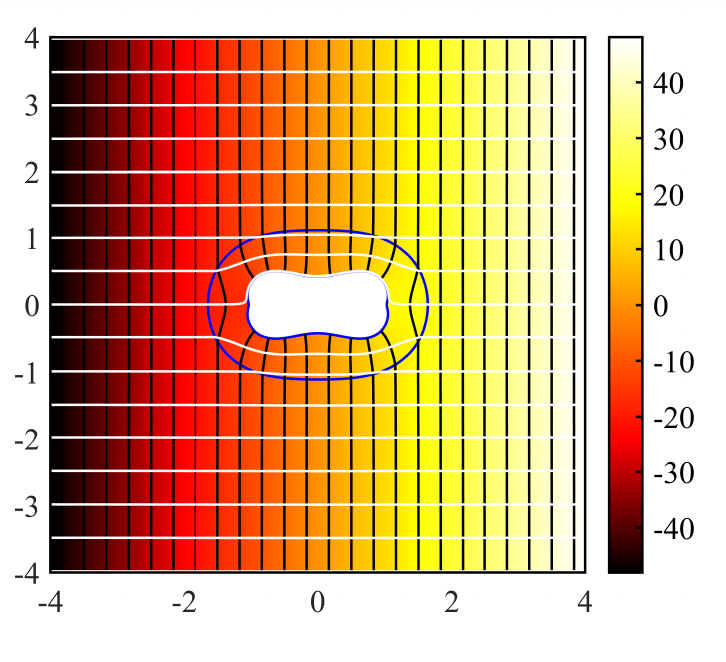}}\\
	\subfigure[]{
		\includegraphics[width=0.32\linewidth]{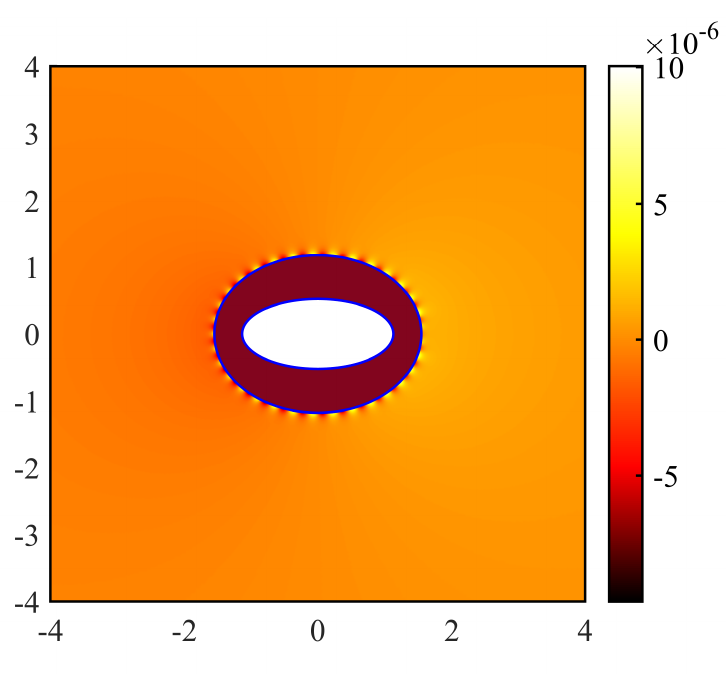}}
	\subfigure[]{
	\includegraphics[width=0.32\linewidth]{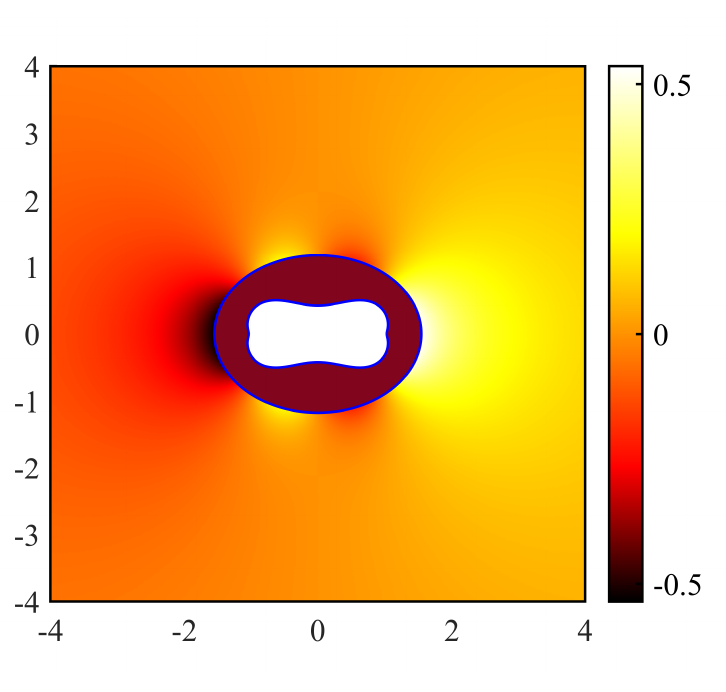}}
	\subfigure[]{
		\includegraphics[width=0.32\linewidth]{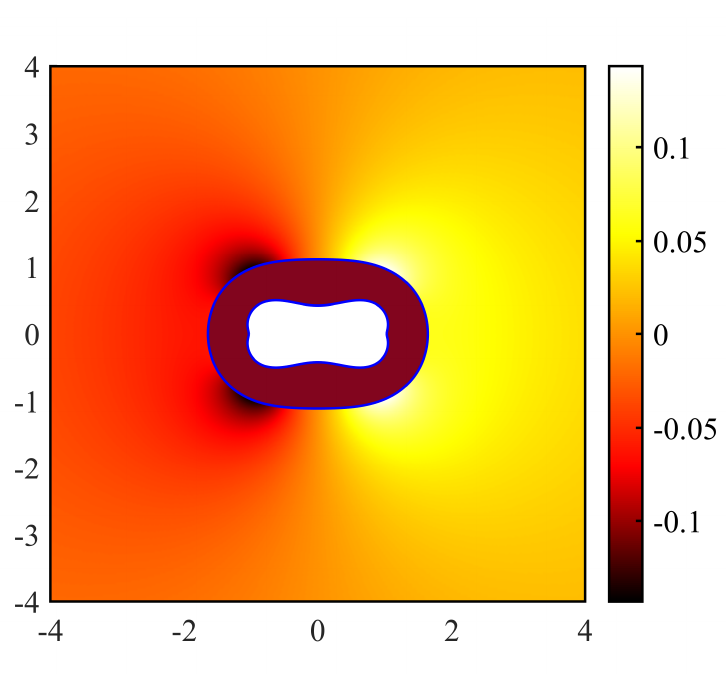}}
	\caption{Top: outer total field $p_\epsilon$; bottom: outer scattered field $p_\epsilon-P$; left: perfect cloaking; middle: 1-order near-cloaking; right: weak 2-order near-cloaking. From \eqref{recursive equations-d-1-cos-ellipse} and \eqref{recursive equations-h-1-cos-ellipse}, the Fourier coefficients $d_m$ of $g$ are obtained as $d_0=0.5141$, $d_2=0.7933$, $d_4=-0.3458$, where $n=1$.}\label{fig-near-cloaking-n-1-ellipse}
\end{figure}

\begin{table}[H]
  \caption{Evaluation function $Q$ with different cloaking and $n$}\label{tab-Q-ellipse}
  \centering
  \begin{tabular}{cccc}
    \toprule
    n   & perfect cloaking   &1-order near-cloaking & weak 2-order near-cloaking \\
    \midrule
    1 & 0&  0.969221 & 0.313614\\
    2 & 0& 1.634962& 1.213713\\
    \bottomrule
  \end{tabular}
\end{table}

 \begin{figure}[!htbp]
	\centering  
	\subfigbottomskip=-10pt 
	\subfigcapskip=0pt 
	\subfigure[]{
		\includegraphics[width=0.45\linewidth]{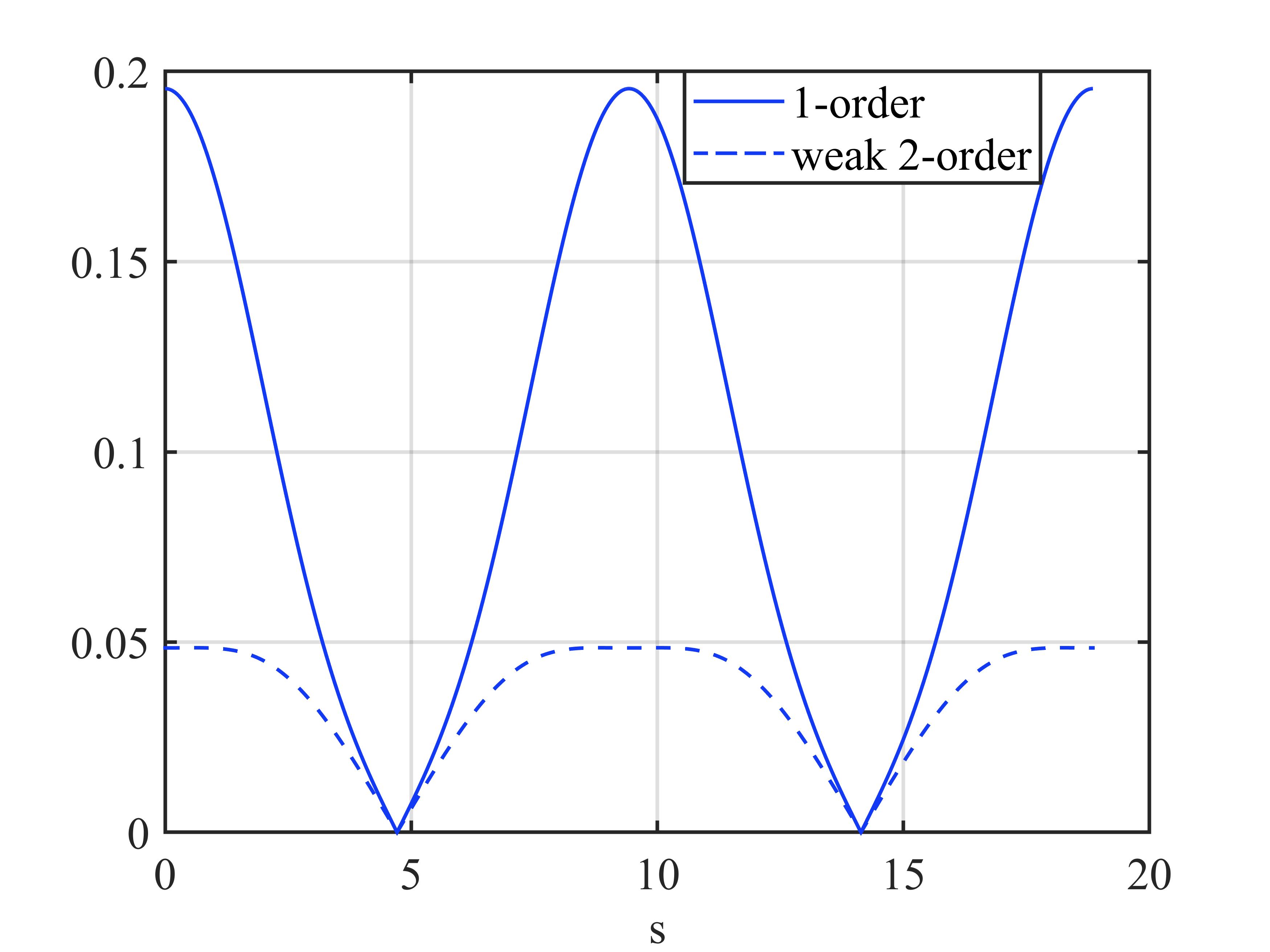}}
	\subfigure[]{
		\includegraphics[width=0.45\linewidth]{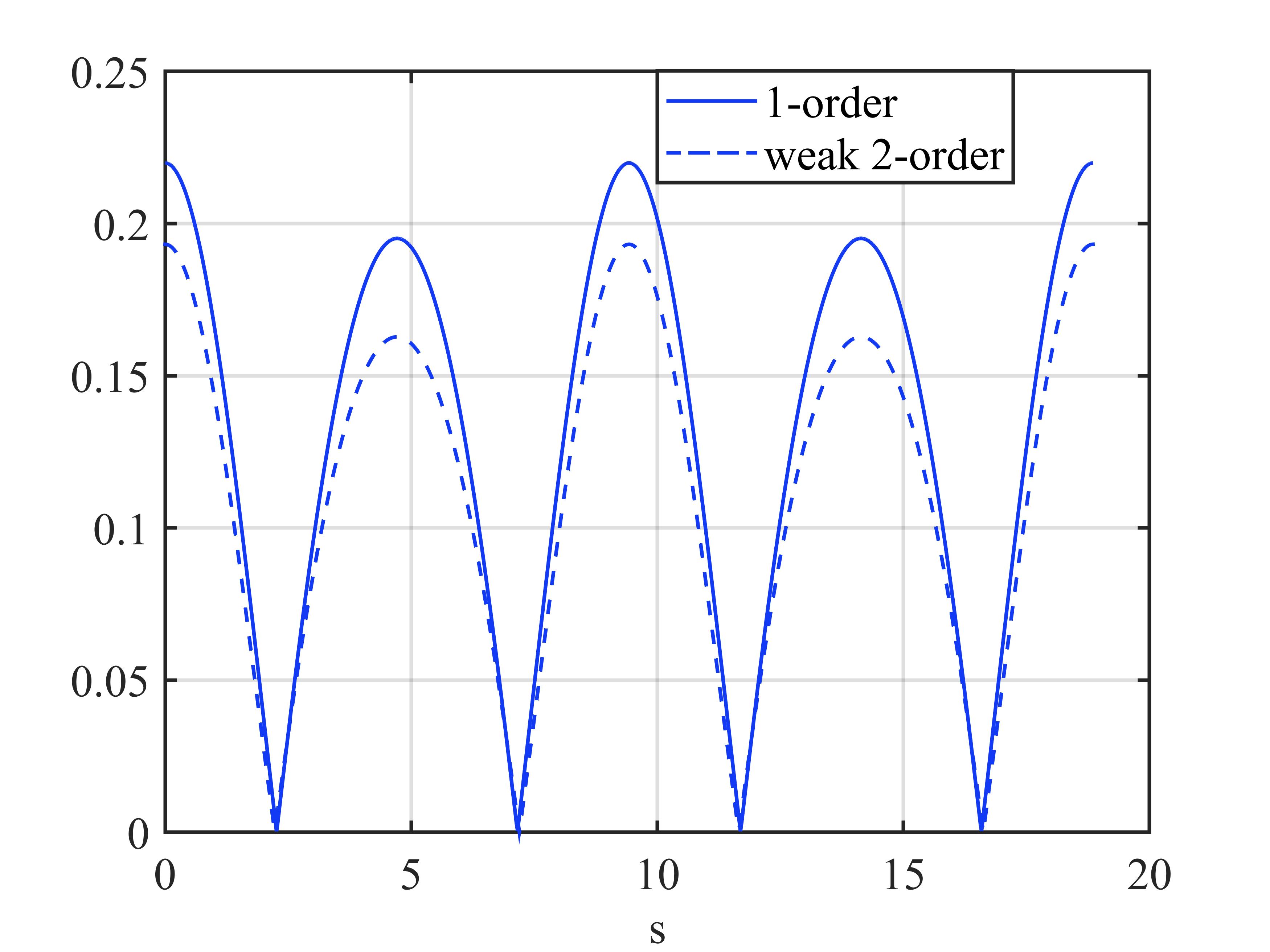}}
	\caption{Absolute value of outer scattered field on the circle of radius 3; left: background field $P=12r\cos(\theta)$; right: background field $P=12r^2\cos(2\theta)$. Here $s$ denotes arc length.}\label{fig-near-cloaking-ellipse-r-3}
\end{figure}

 \begin{figure}[H]
	\centering  
	\subfigbottomskip=-10pt 
	\subfigcapskip=-10pt 
	\subfigure[]{
		\includegraphics[width=0.32\linewidth]{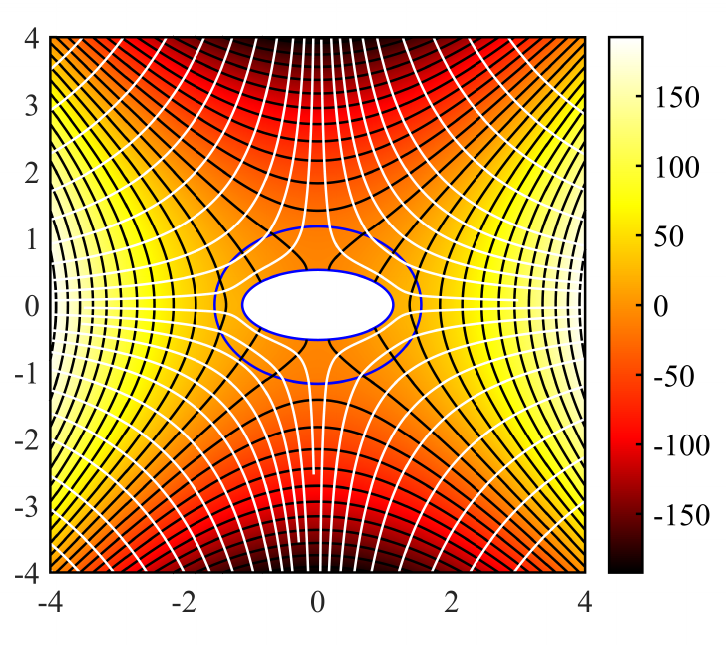}}
	\subfigure[]{
		\includegraphics[width=0.32\linewidth]{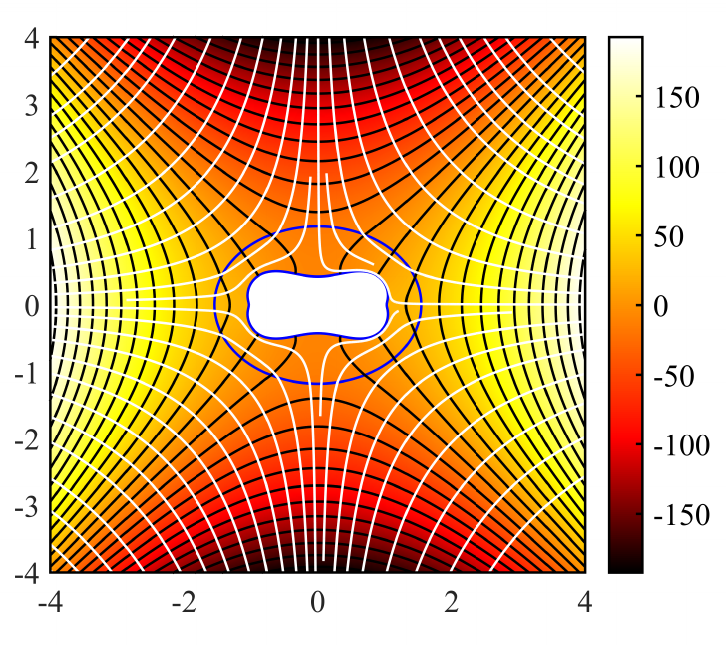}}
	\subfigure[]{
	\includegraphics[width=0.32\linewidth]{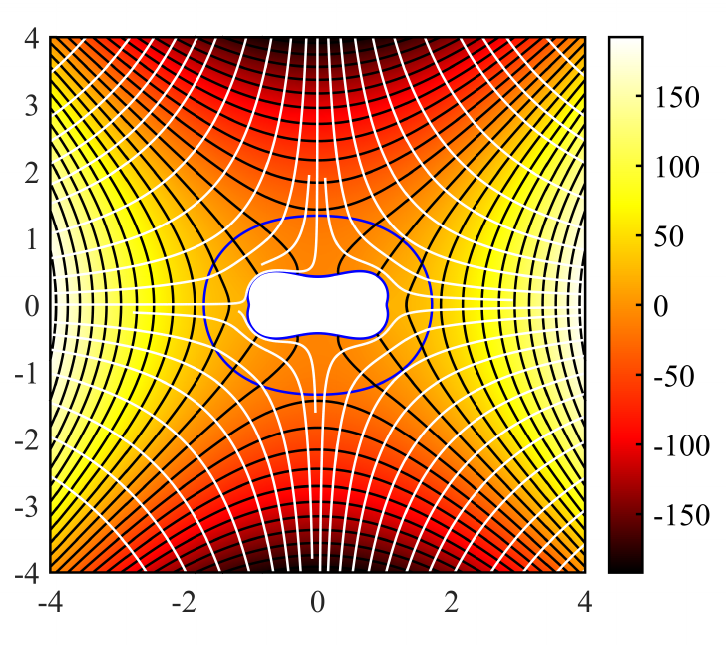}}\\
	\subfigure[]{
		\includegraphics[width=0.32\linewidth]{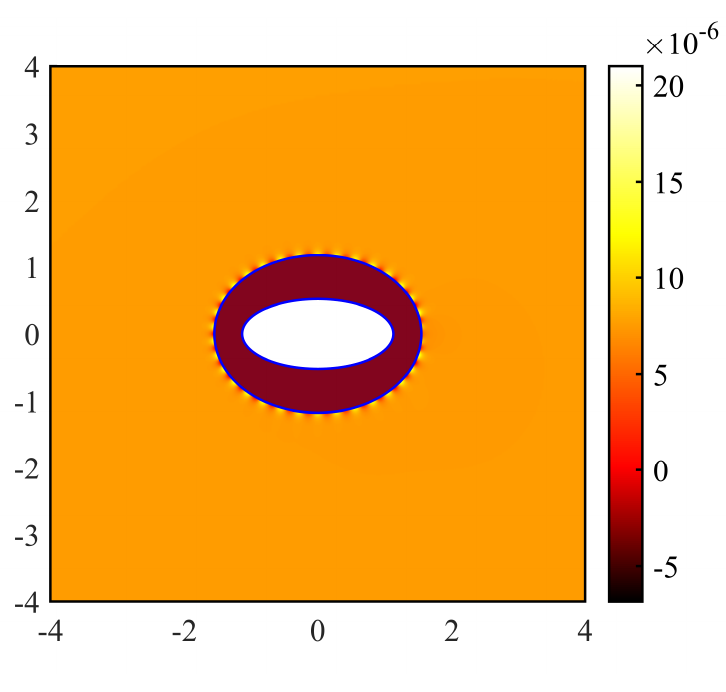}}
	\subfigure[]{
	\includegraphics[width=0.32\linewidth]{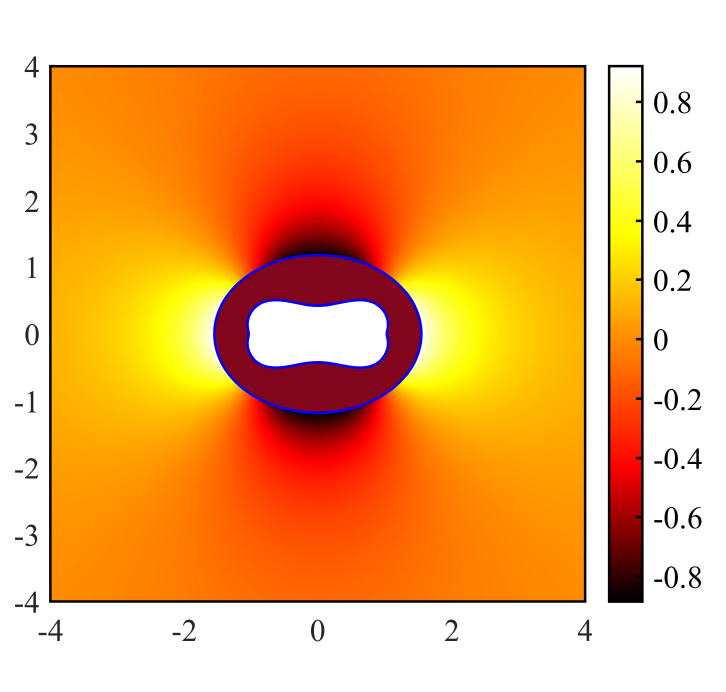}}
	\subfigure[]{
		\includegraphics[width=0.32\linewidth]{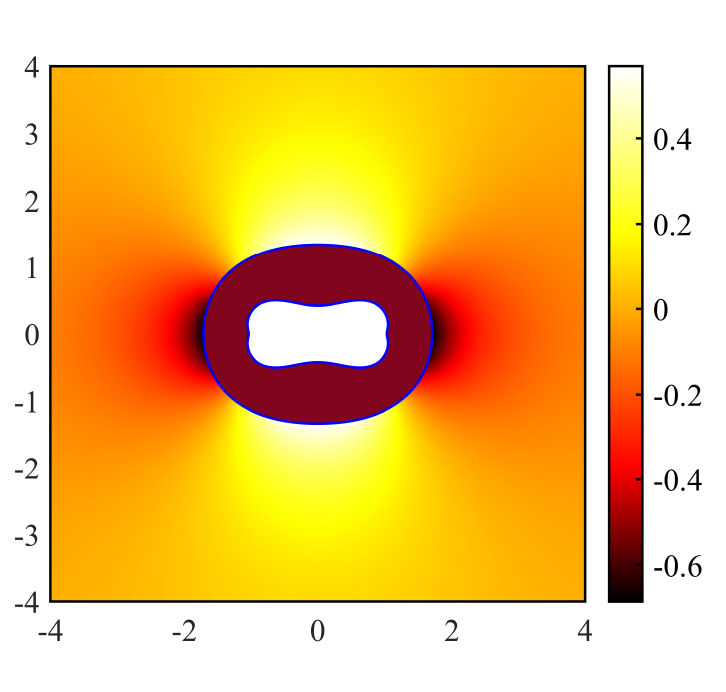}}
	\caption{Top: outer total field $p_\epsilon$; bottom: outer scattered field $p_\epsilon-P$; left: perfect cloaking; middle: 1-order near-cloaking; right: weak 2-order near-cloaking. From \eqref{recursive equations-d-1-cos-ellipse} and \eqref{recursive equations-h-1-cos-ellipse}, the Fourier coefficients $d_m$ of $g$ are obtained as $d_0=1.887$, $d_4=-0.2656$, where $n=2$.}\label{fig-near-cloaking-n-2-ellipse}
\end{figure}

\textcolor{blue}{We finally consider two special cases to verify Remark \ref{rem-special-circle} and to present the flexibility and ability of our proposed method for a larger perturbation, which aims at describing the extent of shape perturbation. Here we also choose $r_i=1$, $r_e=2$, but $\epsilon =0.5$ which is relatively large.} One case is the shape function is a constant, i.e., $f=\frac{a_0}{2}$ where we set $a_0=-1$. Then $\epsilon f = -0.25$ and the inner circle is compressed to a smaller circle of radius $0.75$, which leads to a slightly larger disturbance occurring at the inner boundary. Figure \ref{fig-near-cloaking-circle-m-0-n-1} presents a process of the change for three different cloaking. The perfect cloaking is destroyed due to the perturbation of the inner boundary. There is some scattering, as shown in Figures \ref{fig-near-cloaking-circle-m-0-n-1}(b) and \ref{fig-near-cloaking-circle-m-0-n-1}(e). However, a new perfect cloaking occurs when the outer boundary is also compressed according to the recursive formulas, as shown in Figures \ref{fig-near-cloaking-circle-m-0-n-1}(c) and \ref{fig-near-cloaking-circle-m-0-n-1}(f). The nonlinear background field is considered in Figure \ref{fig-near-cloaking-circle-m-0-n-2}. The other case is the shape function is linear, i.e., $f=a_1\cos(\theta)$. It is special since the shape functions of the inner and outer boundaries are the same. In Figure \ref{fig-near-cloaking-circle-m-1-n-1} we can see the shape of the inner circle is changed and the location is shifted left due to the inner boundary perturbation, which leads to the structure being changed to an eccentric ring from a concentric annulus. Comparing Figure \ref{fig-near-cloaking-circle-m-1-n-1}(e) and Figure \ref{fig-near-cloaking-circle-m-1-n-1}(f), we can find that the scattering due to the perturbation of the inner boundary is reduced when the outer boundary also shifts left such that the structure is concentric.  More specifically, we compute the evaluation function and compare the outer scattered field on the circle of radius $3$, as shown in Table \ref{tab-Q-eccentric-circle} and Figure \ref{fig-near-cloaking-ellipse-m-1-r-3}(a), clearly showing that the scattering from weak $2$-order near-cloaking is smaller. \textcolor{blue}{It is worth noting that the values of $Q$ in Table \ref{tab-Q-eccentric-circle} are larger. However, it is reasonable since the perturbation $\epsilon$ is relatively large.}
Hence a good near-cloaking should keep the structure concentric. The nonlinear background field is considered in Figure \ref{fig-near-cloaking-circle-m-1-n-2}, Table \ref{tab-Q-eccentric-circle}, and Figure \ref{fig-near-cloaking-ellipse-m-1-r-3}(b). A similar enhanced cloaking effect is also achieved. In summary, the performance of the proposed enhanced near-cloaking conditions
has been numerically confirmed.
\begin{figure}[htbp]
	\centering  
	\subfigbottomskip=-10pt 
	\subfigcapskip=-10pt 
	\subfigure[]{
		\includegraphics[width=0.3\linewidth]{total-field-pc-n-1-circle.pdf}}
	\subfigure[]{
		\includegraphics[width=0.3\linewidth]{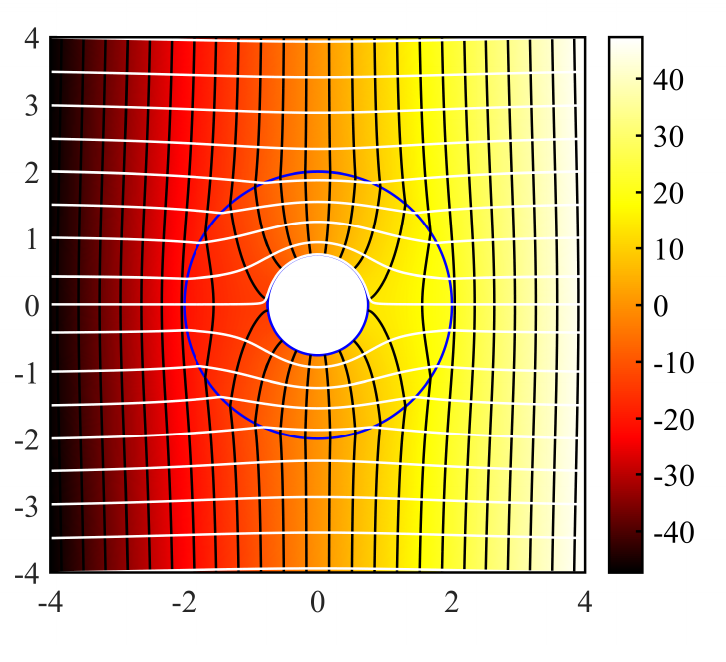}}
	\subfigure[]{
	\includegraphics[width=0.3\linewidth]{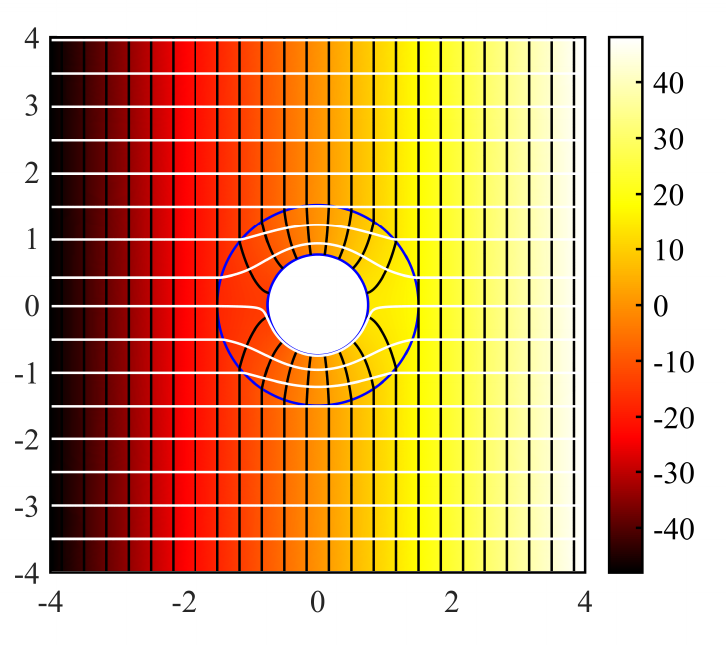}}\\
	\subfigure[]{
		\includegraphics[width=0.3\linewidth]{scattered-field-pc-n-1-circle.pdf}}
	\subfigure[]{
	\includegraphics[width=0.3\linewidth]{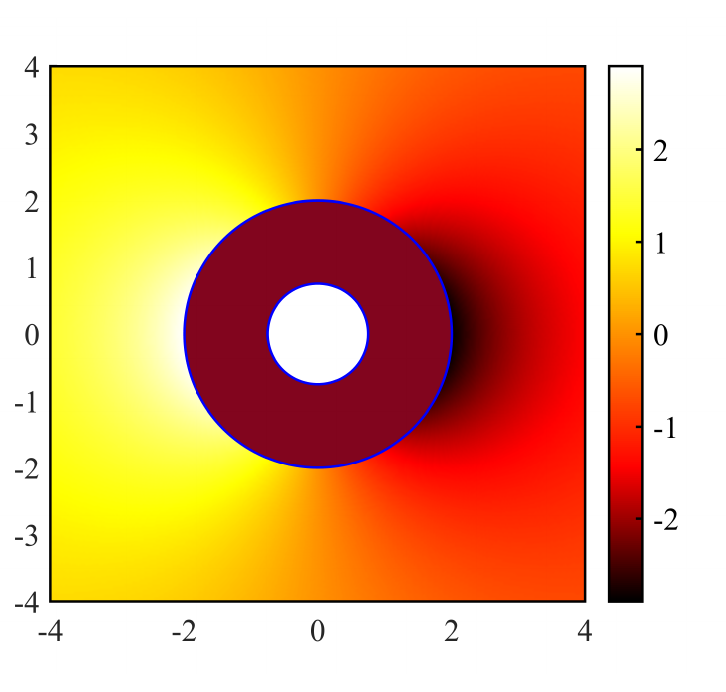}}
	\subfigure[]{
		\includegraphics[width=0.3\linewidth]{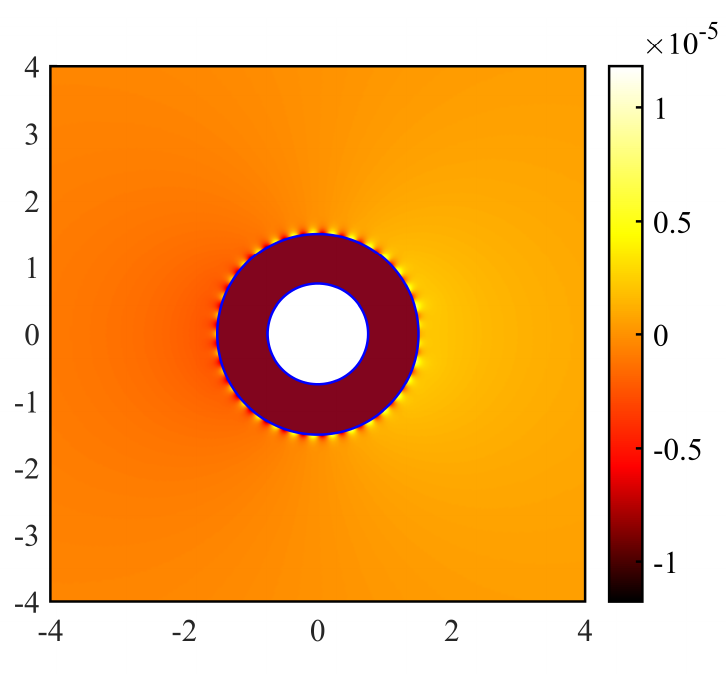}}
	\caption{Top: outer total field $p_\epsilon$; bottom: outer scattered field $p_\epsilon-P$; left: perfect cloaking; middle: 1-order near-cloaking; right: 2-order near-cloaking. From \eqref{recursive equations-d-1-cos} and \eqref{recursive equations-h-1-cos}, the Fourier coefficient $d_m$ of $g$ is obtained as $d_0=-2$, where $n=1$.}\label{fig-near-cloaking-circle-m-0-n-1}
\end{figure}

\begin{figure}[htbp]
	\centering  
	\subfigbottomskip=-10pt 
	\subfigcapskip=-10pt 
	\subfigure[]{
		\includegraphics[width=0.3\linewidth]{total-field-pc-n-2-circle.pdf}}
	\subfigure[]{
		\includegraphics[width=0.3\linewidth]{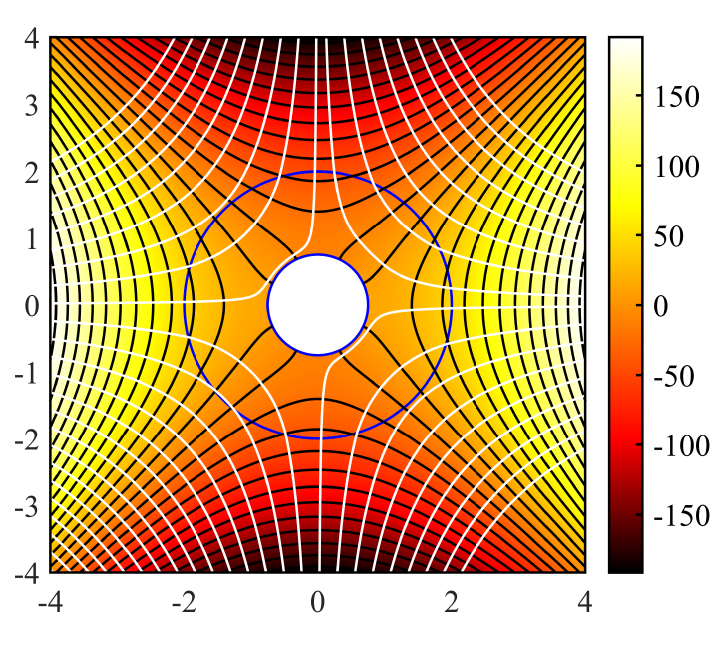}}
	\subfigure[]{
	\includegraphics[width=0.3\linewidth]{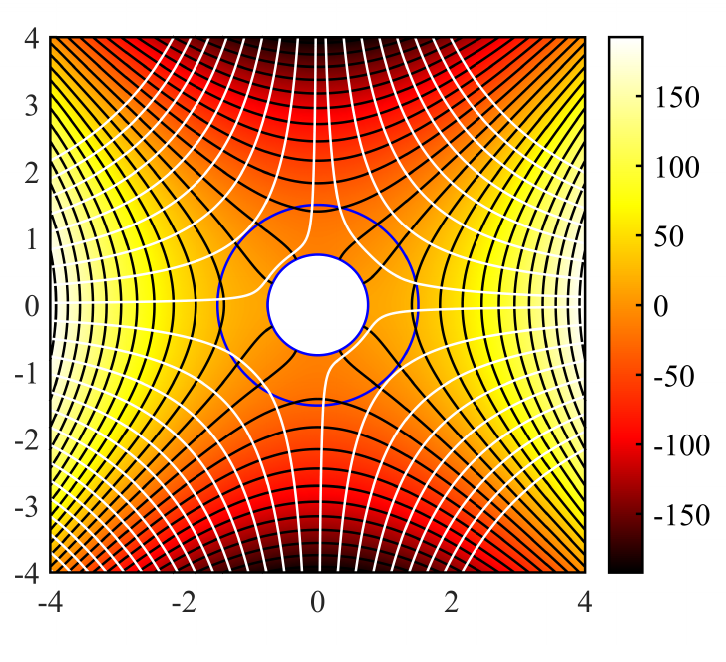}}\\
	\subfigure[]{
		\includegraphics[width=0.3\linewidth]{scattered-field-pc-n-2-circle.pdf}}
	\subfigure[]{
	\includegraphics[width=0.3\linewidth]{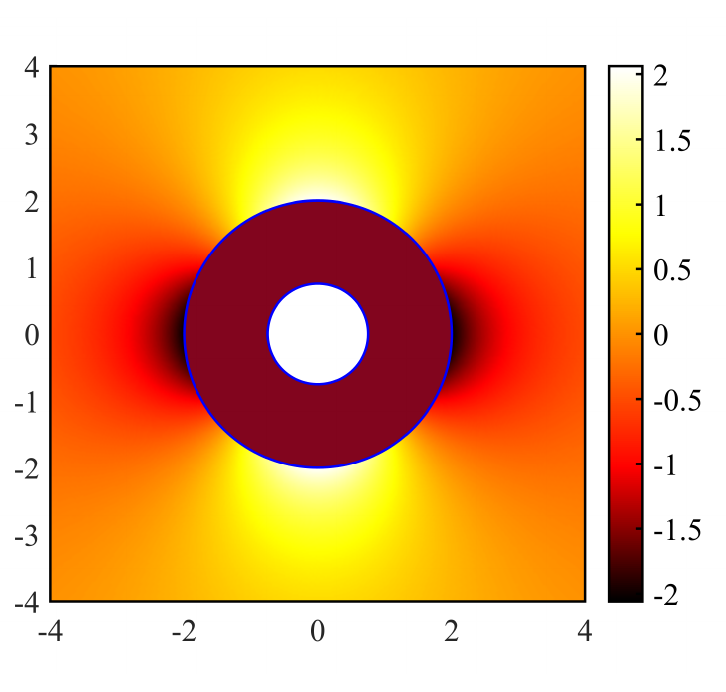}}
	\subfigure[]{
		\includegraphics[width=0.3\linewidth]{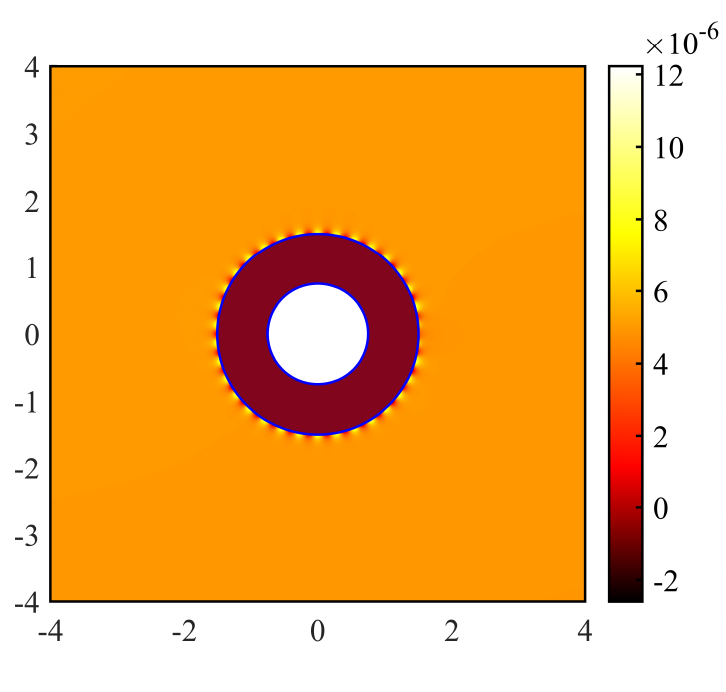}}
	\caption{Top: outer total field $p_\epsilon$; bottom: outer scattered field $p_\epsilon-P$; left: perfect cloaking; middle: 1-order near-cloaking; right: 2-order near-cloaking. From \eqref{recursive equations-d-1-cos} and \eqref{recursive equations-h-1-cos}, the Fourier coefficient $d_m$ of $g$ is obtained as $d_0=-2$, where $n=2$.}\label{fig-near-cloaking-circle-m-0-n-2}
\end{figure}


 \begin{figure}[H]
	\centering  
	\subfigbottomskip=-10pt 
	\subfigcapskip=-10pt 
	\subfigure[]{
		\includegraphics[width=0.32\linewidth]{total-field-pc-n-1-circle.pdf}}
	\subfigure[]{
		\includegraphics[width=0.32\linewidth]{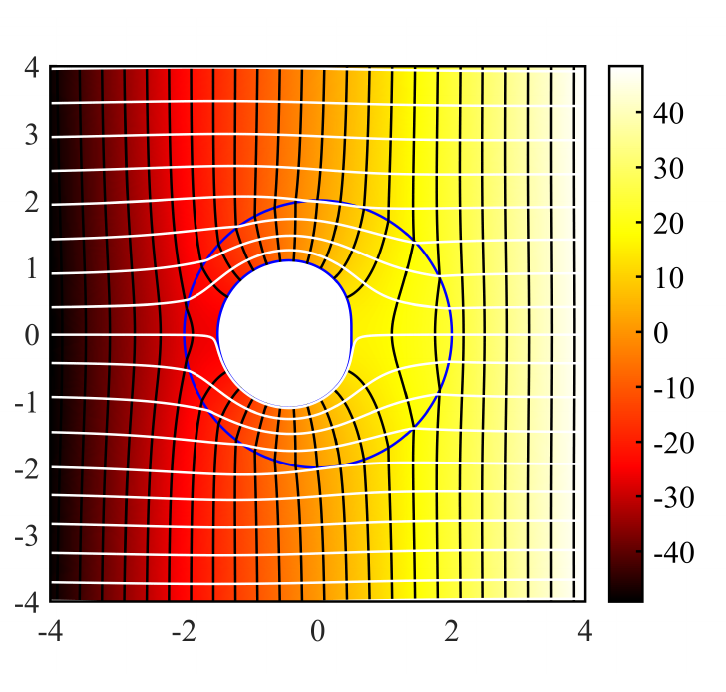}}
	\subfigure[]{
	\includegraphics[width=0.32\linewidth]{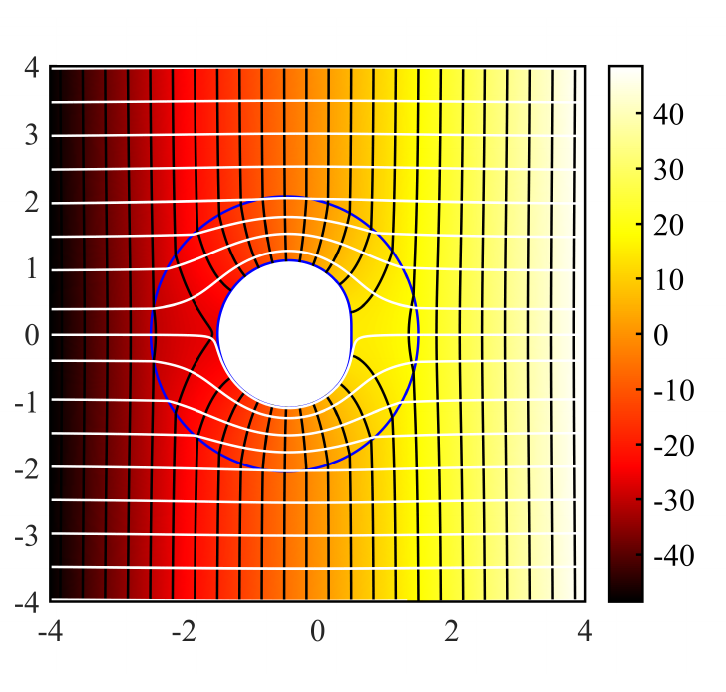}}\\
	\subfigure[]{
		\includegraphics[width=0.32\linewidth]{scattered-field-pc-n-1-circle.pdf}}
	\subfigure[]{
	\includegraphics[width=0.32\linewidth]{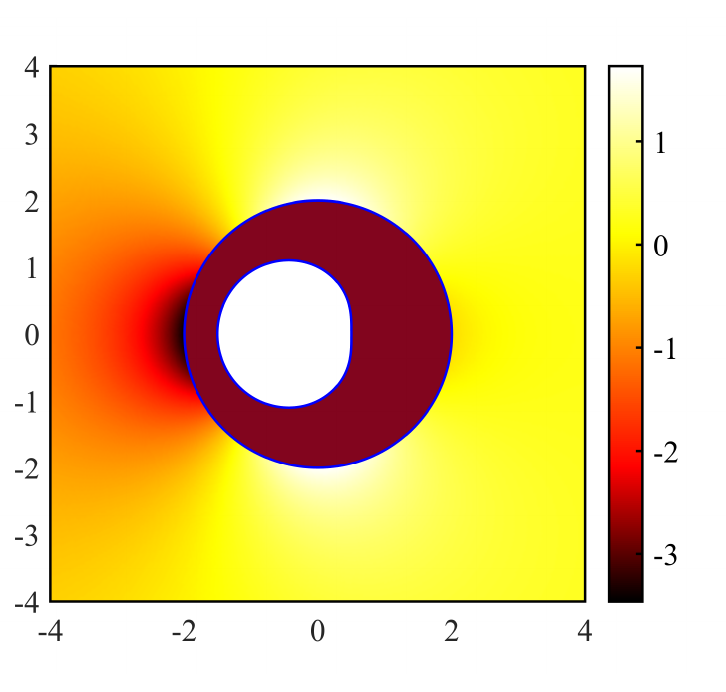}}
	\subfigure[]{
		\includegraphics[width=0.32\linewidth]{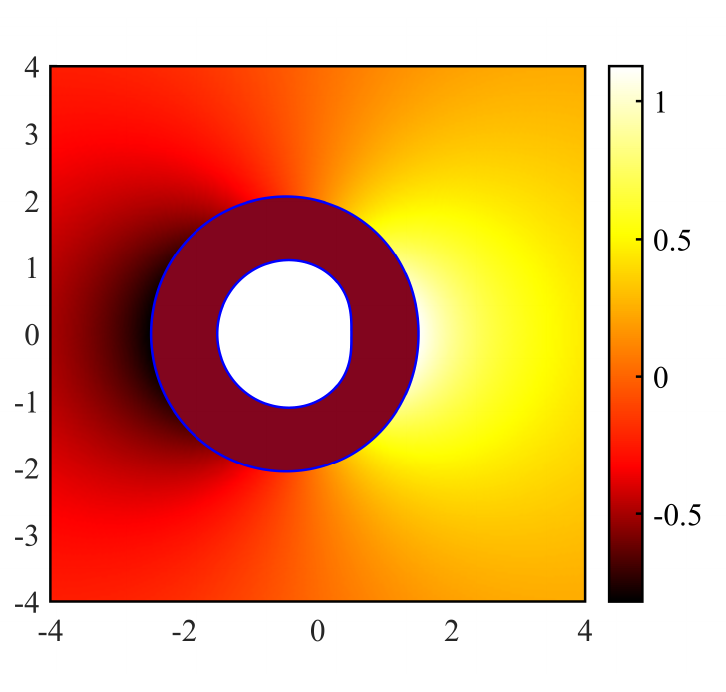}}
	\caption{Top: outer total field $p_\epsilon$; bottom: outer scattered field $p_\epsilon-P$; left: perfect cloaking; middle: 1-order near-cloaking; right: 2-order near-cloaking. From \eqref{recursive equations-d-1-cos} and \eqref{recursive equations-h-1-cos}, the Fourier coefficient $d_m$ of $g$ is obtained as $d_1=-1$, where $a_1=-1$,  $n=1$. }\label{fig-near-cloaking-circle-m-1-n-1}
\end{figure}

 \begin{figure}[H]
	\centering  
	\subfigbottomskip=-10pt 
	\subfigcapskip=0pt 
	\subfigure[]{
		\includegraphics[width=0.45\linewidth]{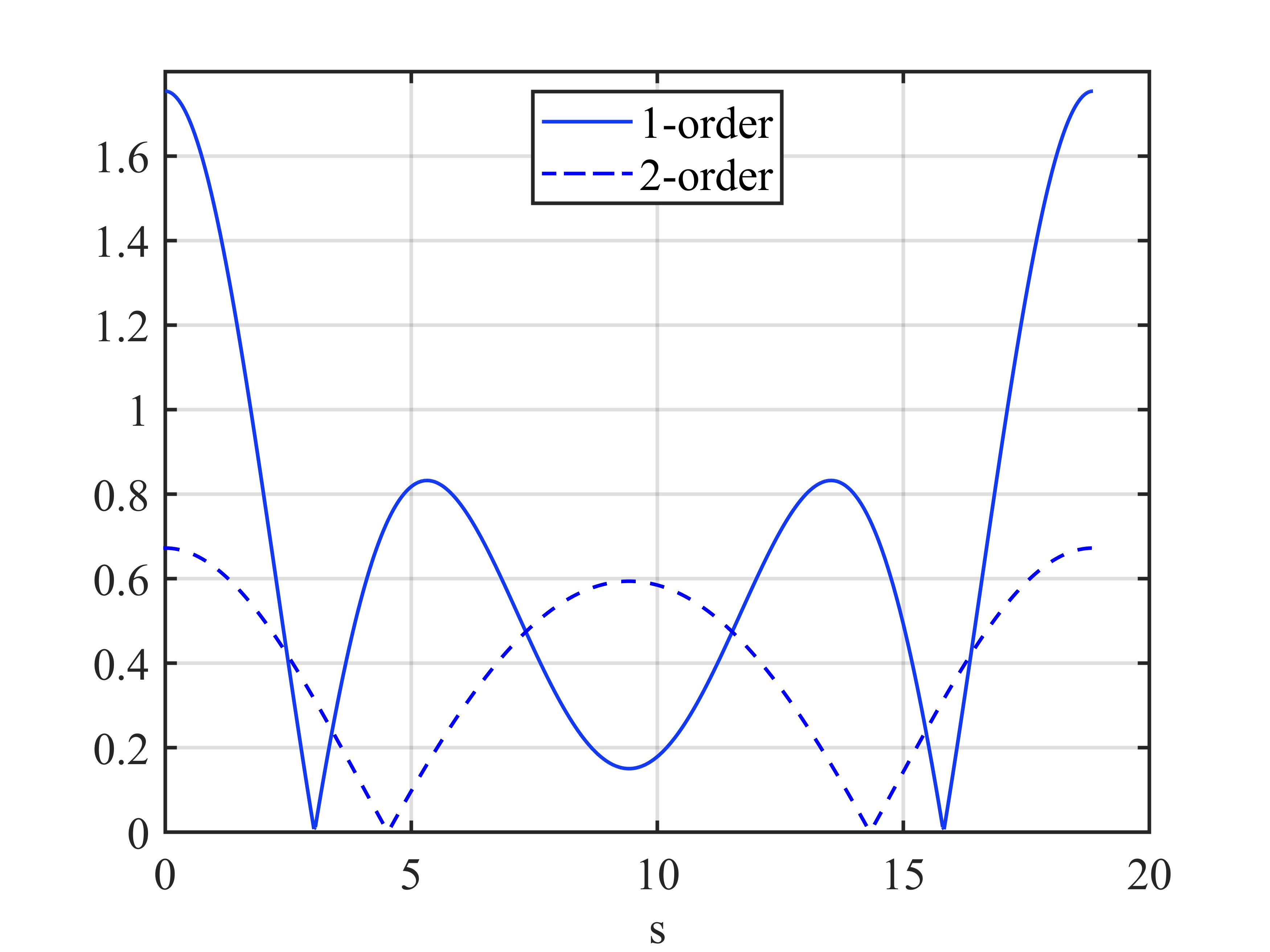}}
	\subfigure[]{
		\includegraphics[width=0.45\linewidth]{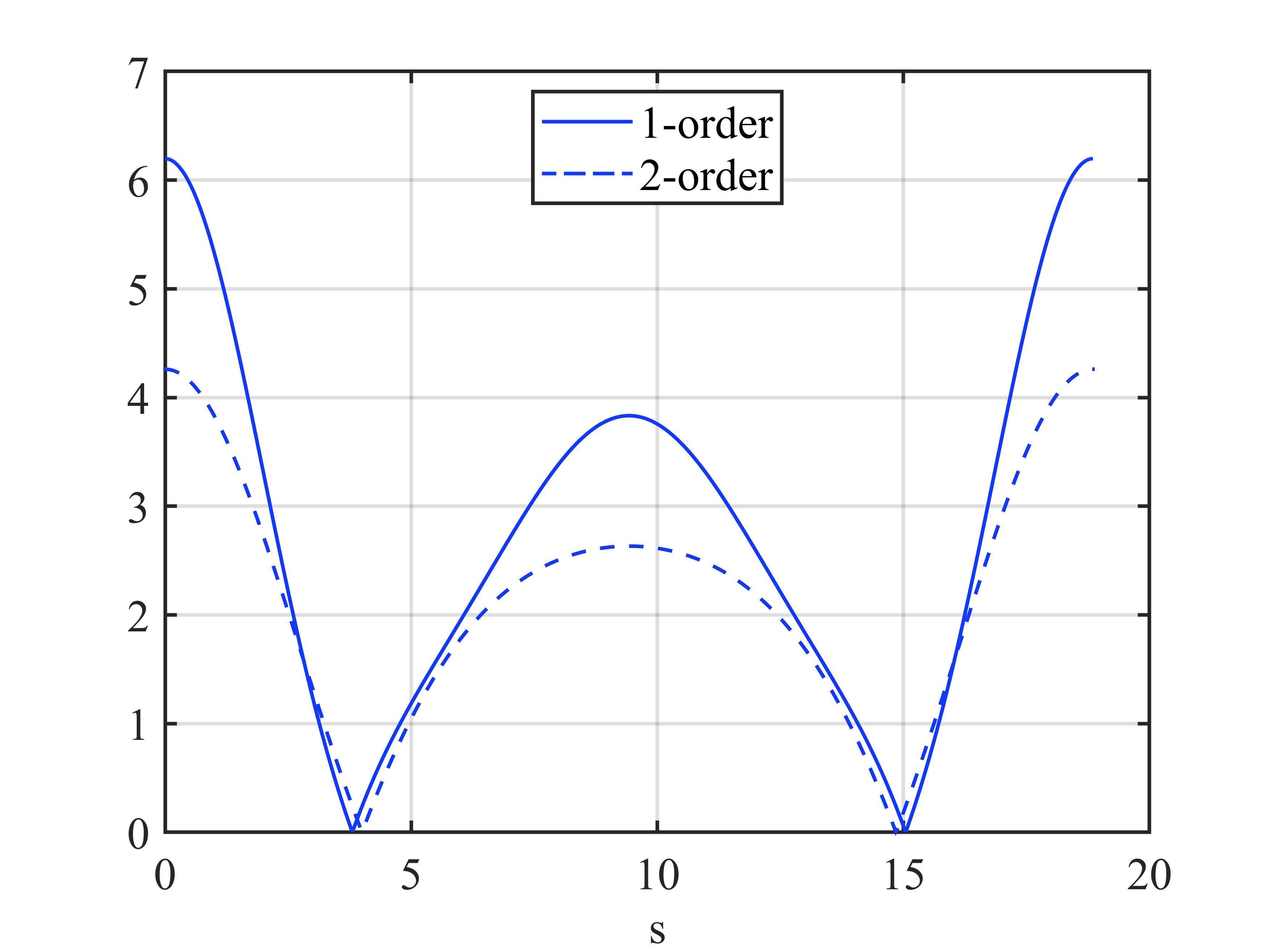}}
	\caption{Absolute value of outer scattered field on the circle of radius 3; left: background field $P=12r\cos(\theta)$; right: background field $P=12r^2\cos(2\theta)$. Here $s$ denotes arc length.}\label{fig-near-cloaking-ellipse-m-1-r-3}
\end{figure}

\begin{table}[!htbp]
  \caption{Evaluation function $Q$ with different cloaking and $n$}\label{tab-Q-eccentric-circle}
  \centering
  \begin{tabular}{cccc}
    \toprule
    n   & perfect cloaking   &1-order near-cloaking & weak 2-order near-cloaking \\
    \midrule
    1 & 0&  5.645441 & 3.008405\\
    2 & 0& 21.711518& 15.596474\\
    \bottomrule
  \end{tabular}
\end{table}

 \begin{figure}[H]
	\centering  
	\subfigbottomskip=-10pt 
	\subfigcapskip=-10pt 
	\subfigure[]{
		\includegraphics[width=0.32\linewidth]{total-field-pc-n-2-circle.pdf}}
	\subfigure[]{
		\includegraphics[width=0.32\linewidth]{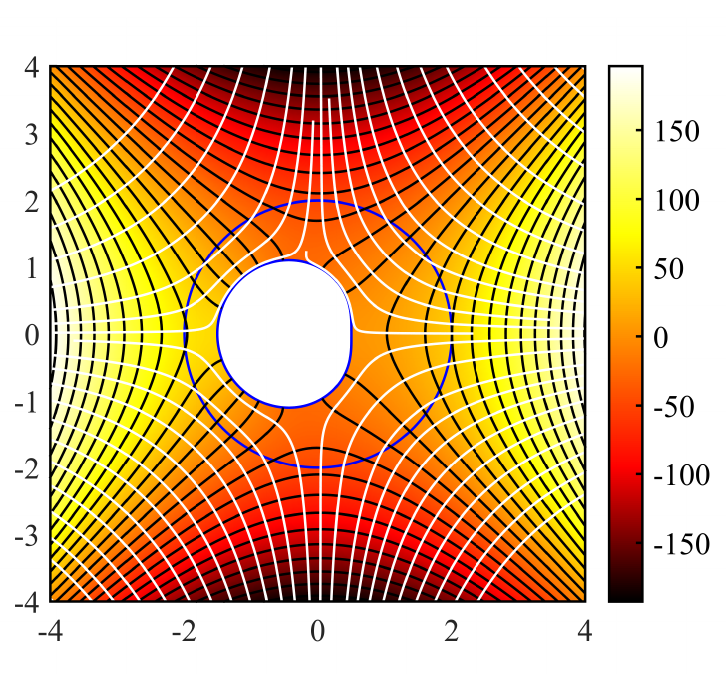}}
	\subfigure[]{
	\includegraphics[width=0.32\linewidth]{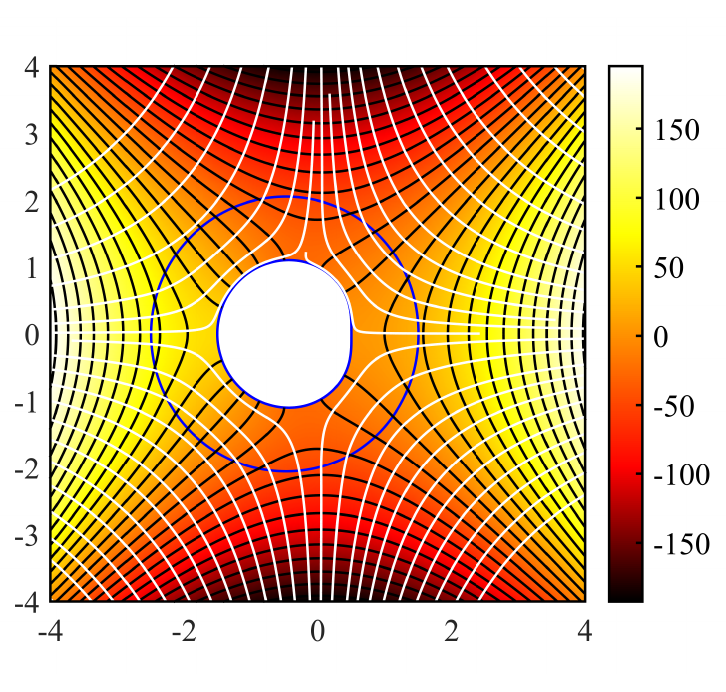}}\\
	\subfigure[]{
		\includegraphics[width=0.32\linewidth]{scattered-field-pc-n-2-circle.pdf}}
	\subfigure[]{
	\includegraphics[width=0.32\linewidth]{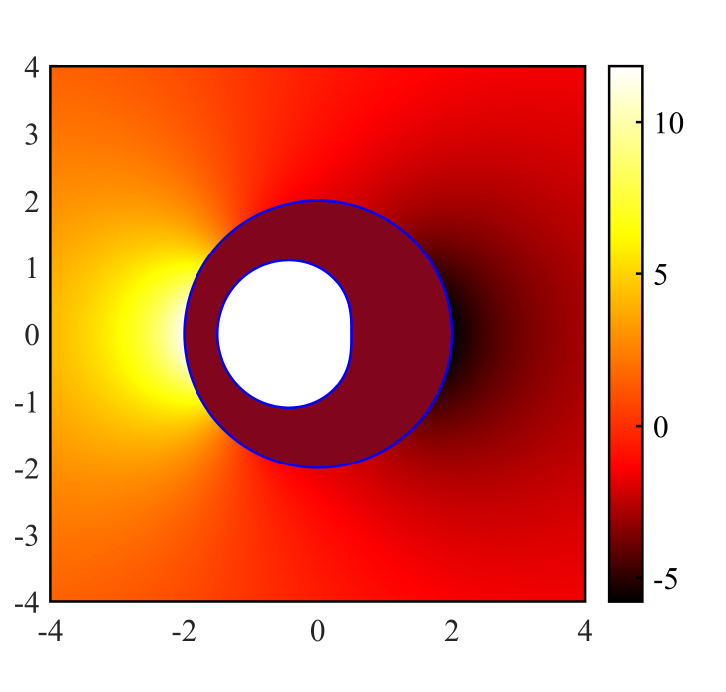}}
	\subfigure[]{
		\includegraphics[width=0.32\linewidth]{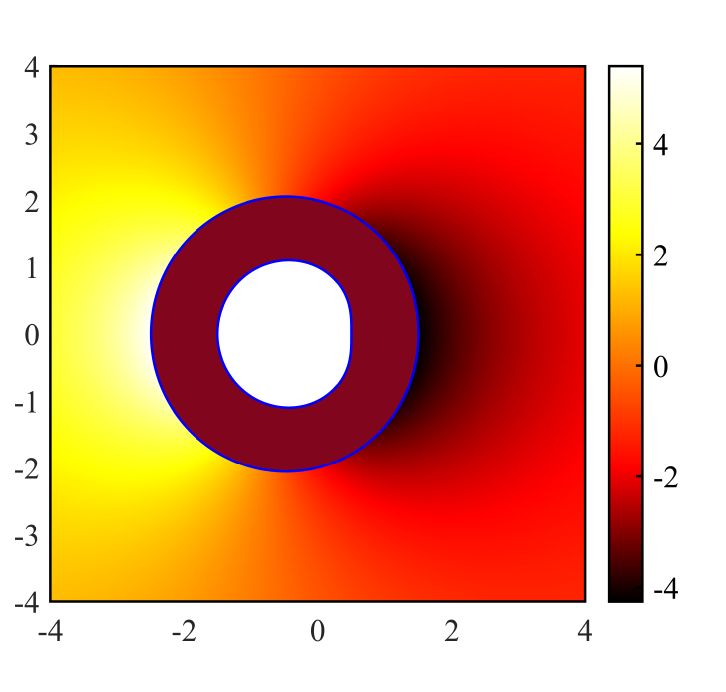}}
	\caption{Top: outer total field $p_\epsilon$; bottom: outer scattered field $p_\epsilon-P$; left: perfect cloaking; middle: 1-order near-cloaking; right: 2-order near-cloaking. From \eqref{recursive equations-d-1-cos} and \eqref{recursive equations-h-1-cos}, the Fourier coefficient $d_m$ of $g$ is obtained as $d_1=-1$, where $a_1=-1$, $n=2$.  }\label{fig-near-cloaking-circle-m-1-n-2}
\end{figure}

\section{Conclusions}\label{sec-conclusion}
In this paper, we presented a new method for the design of a near-cloaking structure that enhanced the invisibility effect based on the perfect hydrodynamic cloaking using the boundary perturbation theory.
We established a complete mathematical framework that allows us to compute the enhanced near-cloaking conditions for complex geometries and achieved an enhanced cloaking effect for this complex object inside the cloaked region with approximately zero scattering. Such a cloaking device is obtained by simultaneously perturbing the inner and outer boundaries of the perfect cloaking structure. The cloaking effect for the electro-osmosis system is significantly enhanced by the proposed near-cloaking structures. In addition to the theoretical results, extensive numerical experiments were conducted to corroborate the theoretical findings. Finally, we would like to emphasize that the proposed near-cloaking structures are metamaterial-less, which eliminates the dependence on complex metamaterial structures.

\section*{Acknowledgement}
The research of H. Liu was supported by NSFC/RGC Joint Research Scheme, N CityU101/21, ANR/RGC
Joint Research Scheme, A-CityU203/19, and the Hong Kong RGC General Research Funds (projects 12302919,
12301420 and 11300821). The research of G. Zheng was supported by the NSF of China (12271151), NSF of
Hunan (2020JJ4166) and NSF Innovation Platform Open Fund project of Hunan Province (20K030).
\\
\\
\textbf{\large{Declarations}}
\\
\\
\textbf{Conflict of interest}
The authors have not disclosed any competing interests.

\begin{thebibliography}{99}
\bibitem{Abbas2017}
{\sc T. Abbas, H. Ammari, G. Hu, A. Wahab and J.C. Ye}, {\em Two-dimensional elastic scattering coefficients and enhancement of nearly elastic cloaking}, {\sl J. Elast.} 128(2), (2017), 203--243.

\bibitem{Ammari2013-1}
{\sc H. Ammari, G. Ciraolo, H. Kang, H. Lee and G. Milton},
{\em Spectral theory of a Neumann-Poincar\'{e}-type operator and analysis of cloaking due to anomalous localized resonance}, {\sl Arch. Rational Mech. Anal.} 208 (2013), 667--692.

\bibitem{Alu2005}
{\sc A. Al\`{u} and N. Engheta},
{\em Achieving transparency with plasmonic and metamaterial coatings},
{\sl Phys. Rev. E} 72 (2005), 016623.

\bibitem{Alu2007}
{\sc A. Al\`{u} and N. Engheta},
{\em Cloaking and transparency for collections of particles with metama-terial and plasmonic covers}, {\sl Opt. Express}, 15 (2007), pp. 7578--7590.

\bibitem{Ammari2012-3}
{\sc H. Ammari, J. Garnier, V. Jugnon, H. Kang, H. Lee, and M. Lim},
{\em Enhancement of near-cloaking. Part III: Numerical simulations, statistical stability, and related questions}, {\sl Contemp. Math.}, 577 (2012), 1--24.

\bibitem{Ando2016}
{\sc K. Ando and H. Kang},
{\em Analysis of plasmon resonance on smooth domains using spectral properties of the Neumann–Poincar\'{e} operator}, {\sl Jour. Math. Anal. Appl.} 435 (2016), 162--178.

\bibitem{Ammari2012-1}
{\sc H. Ammari, H. Kang, H. Lee and M. Lim},
{\em Enhancement of near cloaking using generalizedpolarization tensors vanishing structures. Part I: The conductivity problem},
{\sl Comm. Math.Phys.}, 317 (2012), 253--266.

\bibitem{Ammari2012-2}
{\sc H. Ammari, H. Kang, H. Lee and M. Lim},
{\em Enhancement  of  near-cloaking.  Part II: The Helmholtz  equation},
{\sl Comm.  Math.  Phys.},  317 (2012), 485--502.

\bibitem{Ammari2013}
{\sc H. Ammari, H. Kang, H. Lee, M. Lim and S. Yu},
{\sl Enhancement of near cloaking for the full Maxwell equations}, {\sl SIAM J. Appl. Math.} 73(6) (2013), 2055--2076.

\bibitem{M. Lim2012}
{\sc H. Ammari, H. Kang, M. Lim and H. Zribi},
{\em  The generalized polarization tensors for resolved imaging. part I: Shape reconstruction of a conductivity inclusion}, {\sl Math. Comp.}, 81 (2012) 367--386.

\bibitem{Boyko2021}
{\sc E. Boyko, V. Bacheva, M. Eigenbrod, F. Paratore, A. Gat, S. Hardt and M. Bercovici},
{\em Microscale hydrodynamic cloaking and shielding via electro-osmosis}, {\sl Phys. Rev. Lett}. 126 (2021), 184502.

\bibitem{Coifman1999}
{\sc R. Coifman, M. Goldberg, T. Hrycak, M. Israeli and V. Rokhlin}, {\em An improved operator expansion algorithm for direct and inverse scattering computations}, {\sl Waves Random Media} 1999;  9:441--457

\bibitem{Chung2014}
{\sc D. Chung, H. Kang, K. Kim and H. Lee},
{\em Cloaking due to anomalous localized resonance in plasmonic structures of confocal ellipses}, {\sl SIAM J. Appl. Math.} 74 (2014), 1691--1707.

\bibitem{Chen2012}
{\sc P. Chen, J. Soric and A. Al\`{u}},
{\em Invisibility and cloaking based on scattering cancellation}, {\sl Adv. Mater.} 24 (2012), OP281COP304.

\bibitem{Deng2017-1}
{\sc Y. Deng, H. Liu and G. Uhlmann}, {\em On regularized full- and partial-cloaks in acoustic scattering}, {\sl Commun. Part. Differ. Equ.} 42 (2017), 821--851.

\bibitem{Deng2017-2}
{\sc Y. Deng, H. Liu and G. Uhlmann},
{\em Full and partial cloaking in electromagnetic scattering}, {\sl Arch. Ration. Mech. Anal.} 223 (2017), 265--299.

\bibitem{Greenleaf2008}
{\sc A. Greenleaf, Y. Kurylev, M. Lassas and G. Uhlmann}, {\em Isotropic transformation optics: Approximate acoustic and quantum cloaking}, {\sl New J. Phys.}, 10 (2008), 115024.

\bibitem{Greenleaf2009-1}
{\sc A. Greenleaf, Y. Kurylev, M. Lassas and G. Uhlmann},
{\em Cloaking devices, electromagnetic wormholes and transformation optics}, {\sl SIAM Review} 51 (2009), 3--33.

\bibitem{Greenleaf2009-2}
{\sc A. Greenleaf, Y. Kurylev, M. Lassas and G. Uhlmann}, {\em Invisibility and inverse problems}, {\sl Bull. Amer. Math. Soc. (N.S.)}, 46 (2009), 55--97.

\bibitem{Greenleaf2003}
{\sc A. Greenleaf, M. Lassas and G. Uhlmann},
{\em On nonuniqueness for Calder´on’s inverse problem},
{\sl Math. Res. Lett.} 10 (2003), no. 5-6, 685--693.

\bibitem{Hele1898}
{\sc H. Hele-Shaw},
{\em The flow of water}, {\sl Nature} 58, 34  (1898).

\bibitem{Kocyigit2013}
{\sc I. Kocyigit, H. Liu, and H. Sun}, {\em Regular scattering patterns from near-cloaking devices and their implications for invisibility cloaking}, {\sl Inverse Problems}, 29 (2013), 045005.

\bibitem{Kohn2010}
{\sc R. Kohn, D. Onofrei, M. Vogelius and M. Weinstein},
{\em Cloaking via change of variables for the Helmholtz equation}, {\em Commu. Pure Appl. Math.}, 63 (2010), 0973--1016.

\bibitem{Kohn2008}
{\sc R. Kohn, H. Shen, M. Vogelius and M. Weinstein}, {\em Cloaking  via  change  of  variables  inelectric  impedance  tomography}, {\sl Inverse Problems}, 24 (2008), 015016.

\bibitem{Liu2009}
{\sc H. Liu}, {\em Virtual reshaping and invisibility in obstacle scattering}, {\sl Inverse Probl.} 25(4), 045006 (2009).

\bibitem{Liu2013-1}
{\sc H. Liu}, {\em On near-cloak in acoustic scattering}, {\sl J. Differ. Equ.} 254, 1230--1246 (2013).

\bibitem{Li2013}
{\sc J. Li, H. Liu and S. Mao}, {\em Approximate acoustic cloaking in inhomogeneous isotropic space}, {\sl Sci. China Math.} 56(12), 2631--2644 (2013).

\bibitem{Li2015}
{\sc J. Li, H. Liu, L. Rondi and G. Uhlmann},
{\em Regularized transformation-optics cloaking for the Helmholtz equation: from partial cloak to full cloak}, {\sl Commun. Math. Phys.} 335(2), 671--712 (2015).

\bibitem{Li2012}
{\sc J. Li, H. Liu and H. Sun}, {\em Enhanced approximate cloaking by SH and FSH lining}, {\sl Inverse Probl.} 28, 075011 (2012).

\bibitem{Liu2023}
{\sc H. Liu and Z-Q. Miao and G-H. Zheng},
{\em A mathematical theory of microscale hydrodynamic cloaking and shielding by electro-osmosis}, {\sl arXiv.2302.07495}.

\bibitem{Liu2013-2}
{\sc H. Liu and H. Sun}, {\em Enhanced near-cloak by FSH lining}, {\sl J. Math. Pures Appl.} 99(1), 17--42 (2013)

\bibitem{Liu2021}
{\sc H. Liu, W-Y. Tsui, A. Wahab and and X. Wang}
{\em Three-dimensional elastic scattering coefficients and enhancement of the elastic near cloaking}, {\sl J. Elast.} 143 (2021), 111--146.

\bibitem{Liu2011}
{\sc  H. Liu and T. Zhou}, {\em On approximate electromagnetic cloaking by transformation media}, {\sl SIAM J. Appl. Math.} 71, 218--241 (2011).

\bibitem{Zribi2016}
{\sc J. Lagha, F. Triki and H. Zribi}
{\em Small perturbations of an interface for elastostatic problems}
{sl Mathematical Methods in the Applied Sciences} 40(10): (2016) 3608--3636.

\bibitem{Pendry2006}
{\sc J. Pendry, D. Schurig and D. Smith},
{\em Controlling electromagnetic fields}, {\sl Science}, 12(5781): (2006), 1780--1782.

\bibitem{Park2019a}
{\sc J. Park, J. Youn and Y. Song},
{\em Hydrodynamic metamaterial cloak for drag-free flow}, {\sl Phys. Rev. Lett.}, 123 (2019), 074502.

\bibitem{Park2019b}
{\sc J. Park, J. Youn and Y. Song},
{\em Fluid-flow rotator based on hydrodynamic metamaterial}, {\sl Phys. Rev. Appl.}, 12 (2019), 061002.

\bibitem{Park2021}
{\sc J. Park, J. Youn and Y. Song},
{\em Metamaterial hydrodynamic flow concentrator}, {\sl Extreme Mech. Lett.}, 42 (2021), 101061.
\end {thebibliography}
\end{document}